





\documentclass[12pt,leqno]{article}

\usepackage{amssymb,amsmath,amsthm}
\usepackage[all]{xy}

\usepackage{enumerate}
\usepackage{mathrsfs}
\usepackage{color}
\usepackage[colorlinks=true, pdfstartview=FitV, linkcolor=blue,%
citecolor=blue, urlcolor=blue]{hyperref}

\newcommand{\nc}{\newcommand}
\nc{\on}{\operatorname}

\newenvironment{red}{\relax\color{red}}{\hspace*{.5ex}\relax}

\newcommand{\ber}{\begin{red}}
\newcommand{\er}{\end{red}}

\newlength{\my}
\setlength{\my}{\textwidth}
\addtolength{\my}{-50pt}
\nc{\noi}{\noindent}

\newtheorem{theorem}{Theorem}[section]
\newtheorem{proposition}[theorem]{Proposition}
\newtheorem{lemma}[theorem]{Lemma}
\newtheorem{corollary}[theorem]{Corollary}
\newtheorem{convention}[theorem]{Convention}

\theoremstyle{definition}

\newtheorem{definition}[theorem]{Definition}
\newtheorem{notation}[theorem]{Notation}

\newtheorem{remark}[theorem]{Remark}

\nc{\Rem}{\begin{remark}}
\nc{\enrem}{\end{remark}}


\nc{\RR}{\mathrm{R}}
\nc{\LL}{\mathrm{L}}


\newcommand{\C}{{\mathbb{C}}}

\newcommand{\R}{{\mathbb{R}}}

\newcommand{\Z}{{\mathbb{Z}}}

\def\phi{{\varphi}}
\def\epsilon{\varepsilon}

\newcommand{\cor}{{\bf k}}

\def\sha{\mathscr{A}}
\def\shb{\mathscr{B}}
\def\shc{\mathscr{C}}
\def\shd{\mathscr{D}}
\def\she{\mathscr{E}}
\def\shf{\mathscr{F}}

\def\shm{\mathscr{M}}
\def\shn{\mathscr{N}}
\def\sho{\mathscr{O}}





\newcommand{\rmpt}{{\rm pt}}

\newcommand{\into}{\hookrightarrow}

\renewcommand{\to}[1][]{\xrightarrow[]{#1}}

\newcommand{\isoto}[1][]{\xrightarrow[#1]%
{{\raisebox{-.6ex}[0ex][-.6ex]{$\mspace{1mu}\sim\mspace{2mu}$}}}}

\newcommand{\To}{\mathop{\makebox[2em]{\rightarrowfill}}}


\newcommand{\muhom}{\mu hom}
\newcommand{\muHom}[1][]{\mathrm{Hom}^\mu_{\raise1.5ex\hbox to.1em{}#1}}
\newcommand{\Hom}[1][]{\mathrm{Hom}_{\raise1.5ex\hbox to.1em{}#1}}
\newcommand{\RHom}[1][]{\RR\mathrm{Hom}_{\raise1.5ex\hbox to.1em{}#1}}
\newcommand{\Ext}[2][]{\mathrm{Ext}_{\raise1.5ex\hbox to.1em{}#1}^{#2}}
\renewcommand{\hom}[1][]{{\mathscr{H}\mspace{-4mu}om}_{\raise1.5ex\hbox to.1em{}#1}}
\newcommand{\rhom}[1][]{{\RR\mathscr{H}\mspace{-3mu}om}_{\raise1.5ex\hbox to.1em{}#1}}
\newcommand{\rhomc}[1][]
{{\mathscr{H}\mspace{-3mu}om}^*_{\raise1.5ex\hbox to.1em{}#1}}

\newcommand{\ext}[2][]{{\mathscr{E}xt}_{\raise1.5ex\hbox to.1em{}#1}^{#2}}
\newcommand{\Tor}[2][]{\mathrm{Tor}^{\raise1.5ex\hbox to.1em{}#1}_{#2}}
\newcommand{\tens}[1][]{\mathbin{\otimes_{\raise1.5ex\hbox to-.1em{}{#1}}}}
\newcommand{\ltens}[1][]{\mathbin{\overset{\mathrm{L}}\tens}_{#1}}

\newcommand{\lltens}[1][]{{\mathop{\tens}\limits^{\rm L}}_{#1}}
\newcommand{\detens}{\underline{\etens}}

\newcommand{\etens}{\mathbin{\boxtimes}}
\newcommand{\letens}{\overset{\mathrm{L}}{\etens}}

\newcommand{\Endo}[1][]{\mathrm{End}_{\raise1.5ex\hbox to.1em{}#1}}

\newcommand{\Aut}[1][]{\mathrm{Aut}_{\raise1.5ex\hbox to.1em{}#1}}

\newcommand{\rsect}{\mathrm{R}\Gamma}

\newcommand{\conv}[1][]{\mathop{\circ}\limits_{#1}}

\newcommand{\aconv}[1][]{\mathop{\circ}\limits^{a}\limits_{#1}}

\newcommand{\sconv}[1][]{\mathop{\ast}\limits_{#1}}

\newcommand{\atimes}[1][]{\mathop{\times}\limits^{a}\limits_{#1}}


\newcommand{\oim}[1]{{#1}_*}
\newcommand{\eim}[1]{{#1}_{!}}
\newcommand{\roim}[1]{\RR{#1}_*}

\newcommand{\reim}[1]{\RR{#1}_!\mspace{2mu}}
\newcommand{\opb}[1]{#1^{-1}}

\newcommand{\epb}[1]{#1^{\,!}\,}

\newcommand{\omDA}[1][M]{\omega_{\Delta_{#1}}}
\newcommand{\omDAI}[1][M]{\omega_{\Delta_{#1}}^{{\otimes-1}}}
\newcommand{\omA}[1][M]{\omega_{#1}}
\newcommand{\omAI}[1][M]{\omega_{{#1}}^{{\otimes-1}}}
\newcommand{\dA}[1][M]{\cor_{\Delta_{#1}}}

\newcommand{\omhDA}[1][X]{\omega^{\rm hol}_{\Delta_{#1}}}

\newcommand{\omhA}[1][X]{\omega^{\rm hol}_{#1}}

\newcommand{\dhA}[1][X]{\sho_{\Delta_{#1}}}
\newcommand{\shbD}{\shb_\Delta}
\newcommand{\shcD}{\shc_\Delta}
\newcommand{\shbDU}{\shb_\Delta^\vee}
\newcommand{\shcDU}{\shc_\Delta^\vee}


\newcommand{\tr}{{\rm tr}}

\nc{\sHH}{\mathscr{H}\mspace{-4mu}\mathscr{H}}
\newcommand{\HHO}[1][X]{\sHH({\sho}_{#1})}

\newcommand{\HHD}[2][]{\sHH_{#1}(\shd_{#2})}
\newcommand{\RHHD}[2][]{\mathbb{HH}_{#1}(\shd_{#2})}
\newcommand{\RHHDk}[2][]{\mathbb{HH}^k_{#1}(\shd_{#2})}
\newcommand{\RHHDo}[2][]{\mathbb{HH}^0_{#1}(\shd_{#2})}

\newcommand{\HHE}[2][]{\sHH_{#1}(\she_{#2})}
\newcommand{\RHHE}[2][]{\mathbb{HH}_{#1}(\she_{#2})}
\newcommand{\RHHEo}[2][]{\mathbb{HH}^0_{#1}(\she_{#2})}
\newcommand{\RHHEk}[2][]{\mathbb{HH}^k_{#1}(\she_{#2})}

\newcommand{\HHCo}[2][]{\mathbb{HH}_{#1}^0({\C}_{#2})}

\nc{\sMH}{\mathscr{M}\mspace{-4mu}\mathscr{H}}
\newcommand{\MH}[2][]{\sMH_{#1}(\cor_{#2})}
\newcommand{\RMH}[2][]{\mathbb{MH}_{#1}({\cor}_{#2})}
\newcommand{\MHo}[2][]{\mathbb{MH}_{#1}^0({\cor}_{#2})}
\newcommand{\MHk}[2][]{\mathbb{MH}_{#1}^k({\cor}_{#2})}

\newcommand{\MHC}[2][]{\sMH_{#1}({\C}_{#2})}

\newcommand{\eqdot}{\mathbin{:=}}
\newcommand{\seteq}{\mathbin{:=}}

\newcommand{\cl}{\colon}
\newcommand{\scbul}{{\,\raise.4ex\hbox{$\scriptscriptstyle\bullet$}\,}}

\newcommand{\tw}[1]{\widetilde{#1}}

\newcommand{\bl}{\bigl(}
\newcommand{\br}{\bigr)}
\newcommand{\ro}{{\rm(}}
\newcommand{\rf}{\,{\rm)}}
\newcommand{\lp}{{\rm(}}
\newcommand{\rp}{{\rm)}}
\newcommand{\db}[1]{\raisebox{-.3ex}[1.5ex][1ex]{$#1$}}


\newcommand{\Rc}{{\R\text{-c}}}
\newcommand{\cc}{{\text{cc}}}

\newcommand{\GSol}{{\mathrm{Sol}}}
\newcommand{\DR}{\rm DR}

\newcommand{\ba}{\begin{array}}
\newcommand{\ea}{\end{array}}

\nc{\be}{\begin{enumerate}}
\nc{\ee}{\end{enumerate}}
\newcommand{\bnum}{\begin{enumerate}[{\rm(i)}]}
\newcommand{\enum}{\end{enumerate}}
\newcommand{\banum}{\begin{enumerate}[{\rm(a)}]}
\newcommand{\eanum}{\end{enumerate}}

\newcommand{\eq}{\begin{eqnarray}}
\newcommand{\eneq}{\end{eqnarray}}
\newcommand{\eqn}{\begin{eqnarray*}}
\newcommand{\eneqn}{\end{eqnarray*}}

\newcommand{\set}[2]{\left\{#1 \mathbin{;} #2 \right\}}

\nc{\Proof}{\begin{proof}}
\nc{\QED}{\end{proof}}
\nc{\Prop}{\begin{proposition}}
\nc{\enprop}{\end{proposition}}


\def\rop{{\rm op}}

\def\eu{{\rm eu}}
\def\hh{{\rm hh}}
\def\mueu{{\mu\rm eu}}

\newcommand{\DQ}{\ensuremath{\mathrm{DQ}}}
\newcommand{\dpi}{{\dot\pi}}

\newcommand{\HK}{\ensuremath{\mathrm{TK}}}

\DeclareMathOperator{\id}{id}

\DeclareMathOperator{\ori}{or}
\DeclareMathOperator{\chv}{char}

\newcommand{\Supp}{\on{Supp}}
\newcommand{\Der}[1][]{\mathsf{D}^{#1}}
\newcommand{\Derb}{\Der[\mathrm{b}]}

\newcommand{\SSi}{\mathrm{SS}}
\newcommand{\SSid}{\mathrm{SS}_\Delta}

\newcommand{\RD}{\mathrm{D}}
\newcommand{\RDE}{\RD_\she}
\newcommand{\RDD}{\RD_\shd}

\newcommand{\coh}{{\rm coh}}

\newcommand{\dT}{{\dot{T}}}


\nc{\eps}{\varepsilon}
\nc{\hs}{\hspace*}
\nc{\nn}{\nonumber}
\nc{\tM}{\widetilde{M}}
\nc{\h}{\mathbf{h}}
\nc{\tf}{\tilde{f}}
\nc{\codim}{\on{codim}}
\nc{\lh}{\mathscr{H}}
\nc{\bwr}{\mbox{\large{$\wr$}}}
\nc{\dTi}{\dT^{*,\mathrm{in}}}
\nc{\Cd}{\mathrm{C}}
\nc{\tK}{\widetilde{K}}
\nc{\aMM}{a_{M\times M}}

\usepackage{graphics}

\numberwithin{equation}{section}
\date{\today}

\begin{document}

\title{Microlocal Euler classes and Hochschild homology}
\author{Masaki Kashiwara%
\footnote{This work was partially supported by Grant-in-Aid for
Scientific Research (B) 22340005,
Japan Society for the Promotion of Science.}
\ and Pierre Schapira}
\maketitle

\begin{abstract}
We define the notion of a trace kernel on a manifold $M$. Roughly speaking, it is
a sheaf on $M\times M$ for which the formalism of Hochschild homology applies.
We associate a microlocal Euler class to such a kernel, a cohomology class with values in the
relative dualizing complex of the cotangent bundle $T^*M$ over $M$ and we prove that this class is
functorial with respect to the composition of kernels.

This generalizes, unifies and simplifies various results of (relative) index theorems for constructible
sheaves, $\shd$-modules and elliptic pairs.
\end{abstract}


\tableofcontents

\section{Introduction}
Our constructions mainly concern real manifolds, but in order to introduce the subject  we first consider
a complex manifold $(X,\sho_X)$. Denote by $\omhA$ the dualizing complex in the category of
$\sho_X$-modules, that is, $\omhA=\Omega_X\,[d_X]$, where
$d_X$ is the complex dimension of $X$ and $\Omega_X$ is the sheaf of
holomorphic forms of degree $d_X$.  Denote  by
$\dhA$ and $\omhDA$ the direct images of $\sho_X$ and $\omhA$ respectively by
the diagonal embedding $\delta\cl X\into X\times X$. It is well-known
(see in particular~\cite{Ca05,CaW07}) that the
Hochschild homology of $\sho_X$ may be defined by using the isomorphism
\eq\label{eq:hh1}
&&\oim{\delta}\HHO[X]\simeq\rhom[\sho_{X\times X}]\bl\dhA,\omhDA\br.
\eneq
Moreover, if $\shf$ is a coherent $\sho_X$-module and $\RD_\sho\shf\eqdot\rhom[\sho_X](\shf,\omhA)$ denotes
its dual, there are natural morphisms
\eq\label{eq:HHO}
&&\dhA\to \shf\etens\RD_\sho\shf\to\omhDA
\eneq
whose composition defines the  Hochschild class of $\shf$:
\eqn
&&\hh_\sho(\shf)\in H^0_{\Supp(\shf)}(X;\HHO[X]).
\eneqn
These constructions have been extended when replacing $\sho_X$ with a
so-called $\DQ$-algebroid stack $\sha_X$ in~\cite{KS12} ($\DQ$ stands for ``deformation-quantization'').
One of the main results  of loc.\ cit.\ is that  Hochschild classes are functorial with
respect to the composition of kernels, a kind of (relative) index
theorem for coherent $\DQ$-modules.

On the other hand, the notion of
Lagrangian cycles of constructible sheaves on real analytic manifolds
has  been introduced by the first  named author (see~\cite{Ka85})
in order to prove an index  theorem for such sheaves,
after they first appeared in the complex case (see~\cite{Ka73} and~\cite{McP74}).
We refer to~\cite[Chap.~9]{KS90} for a systematic study of Lagrangian cycles and for historical comments.
Let us briefly recall the construction.

Consider a real analytic manifold $M$ and let $\cor$ be a unital commutative ring with finite global dimension.
Denote by $\omA$ the (topological) dualizing complex of $M$, that is, $\omA=\ori_M\,[\dim M]$
where $\ori_M$
is the orientation sheaf of $M$ and $\dim M$ is the dimension.
Finally, denote by $\pi_M\cl T^*M\to M$ the cotangent bundle to $M$.
Let $\Lambda$ be a conic subanalytic Lagrangian subset of $T^*M$.
The group of Lagrangian cycles supported by $\Lambda$ is given by $H^0_\Lambda(T^*M;\opb{\pi_M}\omega_M)$.
Denote by $\Derb_\Rc(\cor_M)$ the bounded derived category of
$\R$-constructible sheaves on $M$. To an object $F$ of this category,
one associates a Lagrangian cycle supported by $\SSi(F)$, the microsupport of $F$. This cycle is
called  the characteristic cycle, or the Lagrangian
cycle or else  the {\em microlocal Euler class} of $F$ and is denoted here by $\mueu_M(F)$.

In fact, it is possible  to treat  the microlocal Euler classes of $\R$-constructi\-ble sheaves on real manifolds
similarly as the Hochschild class of coherent sheaves on complex manifolds.
Denote as above  by
$\dA$ and $\omDA$ the direct image of $\cor_M$ and $\omA$ by
the diagonal embedding $\delta_M\cl M\into M\times M$.
Then we have an isomorphism
\eq\label{eq:mueu2}
&&H^0_\Lambda(T^*M;\opb{\pi_M}\omega_M)\simeq H^0_\Lambda\bl T^*M;\muhom(\dA,\omDA)\br,
\eneq
where $\muhom$ is the microlocalization of the functor $\rhom$.
Then $\mueu_M(F)$ is obtained as follows. Denote by $\RD_MF\eqdot\rhom(F,\omA)$
the dual of $F$. There are natural morphisms
\eq\label{eq:HHEU0}
&&\dA \to F\etens\RD_MF\to\omDA,
\eneq
whose composition gives the microlocal Euler class of $F$.

In this paper, we construct the microlocal Euler class for a wide class of sheaves, including of course the
constructible sheaves but also the sheaves of  holomorphic solutions of coherent
$\shd$-modules and, more generally, of elliptic pairs  in the sense of ~\cite{ScSn94}.
To treat such situations, we are led to introduce the notion of a  trace kernel.

On a real manifold $M$ (say of class $\mathrm{C}^\infty$), a {\em trace kernel\/}
is the data of a triplet $(K,u,v)$ where $K$ is an object of the derived category of sheaves
$\Derb(\cor_{M\times M})$  and $u,v$ are morphisms
\eq\label{eq:HHEU1}
&&u\cl\dA\to K,\quad v\cl K\to\omDA.
\eneq
One then naturally defines the microlocal Euler class
$\mueu_M(K,u,v)$ of such a kernel, an element of
$H^0_\Lambda(T^*M;\muhom(\dA,\omDA))$ where
$\Lambda=\SSi(K)\cap T^*_{\Delta_M}(M\times M)$.
By~\eqref{eq:HHEU0}, a constructible sheaf gives rise to a trace kernel.

If $X$ is a complex manifold and $\shm$  is  a coherent $\shd_X$-module, we construct natural morphisms
(over the base ring $\cor=\C$)
\eq\label{eq:HHEU2}
&&\C_{\Delta_X}\to \Omega_{X\times X}\lltens[\shd_{X\times X}](\shm\detens\RD_D\shm)\to\omDA[X],
\eneq
where $\RD_D\shm$ denotes the dual of $\shm$ as a $\shd$-module.
In other words, one naturally associates a trace kernel on $X$ to a coherent $\shd_X$-module. Moreover, we
prove that under suitable microlocal conditions, the tensor product of two trace kernels is again a trace kernel,
and it follows that one can associate a trace kernel to an elliptic pair.

We  study trace kernels and their microlocal Euler classes,
showing that some  proofs of~\cite{KS12} can be easily adapted
to this situation.
One of our main results is the functoriality of the microlocal Euler classes:
the   microlocal Euler class of the composition $K_1\circ K_2$ of two trace kernels is
the composition of the microlocal Euler classes
of $K_1$ and $K_2$ (see Theorem~\ref{th:HH1} for a precise statement).
Another essential result (which is far from obvious) is that
the composition of classes coincides with
the composition for $\opb{\pi_M}\omega_M$ constructed in~\cite{KS90}
via the isomorphism between $\muhom(\dA,\omDA)$ and $\opb{\pi_M}\omega_M$.

As an application, we recover in a
single proof  the classical results on the index theorem
for constructible sheaves (see~\cite[\S~9.5]{KS90}) as well as the index theorem for elliptic
pairs of~\cite{ScSn94}, that is, sheaves of generalized holomorphic solutions of coherent
$\shd$-modules. We also briefly explain how to adapt
trace kernels to the formalism
of the Lefschetz trace formula.

We call here $\muhom(\dA,\omDA)$ the {\em microlocal homology of $M$,}
and this paper shows that, in some sense, the microlocal homology of real manifolds
plays the same role as the Hochschild homology of complex manifolds.

To conclude this introduction, let us make a general remark.
The category $\Derb_\Rc(\cor_M)$  of constructible sheaves on a compact real analytic manifold $M$ is ``proper''
in the sense of Kontsevich (that is, Ext finite) but it does not admit a Serre functor
(in the sense of Bondal-Kapranov)
and it is not clear whether it is smooth (again in the sense of Kontsevich).
However this category naturally appears in Mirror Symmetry (see~\cite{FLTZ10}) and it would be a
natural question to try to understand its  Hochschild homology in the sense of~\cite{McC94,Ke99}.
We don't know how to compute it, but the above construction,
with the use of $\muhom(\dA,\omDA)$,
provides an alternative approach of the Hochschild homology of this category. This result is not totally surprising
if one remembers the formula
(see~\cite[Prop.~8.4.14]{KS90}):
\eqn
&&\RD_{T^*M}(\muhom(F,G))\simeq\muhom(G,F)\tens \opb{\pi_M}\omega_M.
\eneqn
Hence, in some sense, $ \opb{\pi_M}\omega_M$ plays the role of a microlocal Serre functor.
Note that thanks to Nadler and Zaslow~\cite{NZ09}, the category $\Derb_\Rc(\cor_M)$ is equivalent to the
Fukaya category of the symplectic manifold $T^*M$, and this is another argument to treat
sheaves from a microlocal point of view.

\vspace{0.4ex}\noindent
{\bf Acknowledgments}\\
The second named author warmly thanks St\'ephane Guillermou for helpful discussions.

\section{A short review on sheaves}

Throughout this paper, a manifold means a  real manifold of class $C^\infty$. We
shall mainly follow the notations of~\cite{KS90} and use
some of the main notions introduced  there, in particular that of
microsupport and the functor $\muhom$.

Let $M$ be a manifold.
We denote by $\pi_M\cl T^*M\to M$ its cotangent bundle.
For a submanifold $N$ of $M$, we denote by $T^*_NM$ the
conormal bundle to $N$. In particular, $T^*_MM$ denotes the
zero-section. We set $\dT^*M\eqdot T^*M\setminus T^*_MM$ and we denote
by $\dpi_M$ the restriction of $\pi_M$ to $\dT^*M$.
If there is no risk of confusion, we write simply $\pi$ and $\dpi$ instead of $\pi_M$ and $\dpi_M$.
One denotes by $a\cl T^*M\to T^*M$ the antipodal map,
$(x;\xi)\mapsto(x;-\xi)$ and for a subset $S$ of $T^*M$, one denotes
by $S^a$ its image by this map. A set $A\subset T^*M$ is conic if it is invariant by the action
of $\R^+$ on $T^*M$.

Let $f\cl M\to N$ be a morphism of manifolds.
To $f$ one associates as usual the maps
\eq\label{eq:fdfpi}
&&\ba{c}\xymatrix@C=10ex{
T^*M\ar[dr]_-{\pi_{M}}
                 &M\times_{N}T^*N\ar[d]^\pi\ar[l]_-{f_d}\ar[r]^-{f_\pi}
                                        & T^*N\ar[d]^-{\pi_{N}}\\
                  &M\ar[r]^-f   &N.
}\ea\eneq
(Note that in loc.\ cit.\ the map $f_d$ is denoted by $\opb{{}^tf'}$.)

Let $\Lambda$ be a closed conic subset of $T^*N$. One says that $f$ is
{\em non-characteris\-tic} for $\Lambda$ if
 the map $f_d$ is proper on $\opb{f_\pi}\Lambda$ or, equivalently,
$\opb{f_\pi}\Lambda\cap\opb{f_d}(T^*_MM)\subset M\times_N T^*_NN$.

Let $\cor$ be a commutative  unital ring with finite global
homological dimension.
One denotes by $\cor_M$ the constant sheaf on $M$ with stalk $\cor$
and  by $\Derb(\cor_M)$ the bounded derived category of  sheaves of $\cor$-modules on $M$.
 When $M$ is a real analytic manifold, one denotes by
$\Derb_{\Rc}(\cor_{M})$ the full triangulated subcategory of
$\Derb(\cor_{M})$ consisting of  $\R$-constructible objects.

One denotes by $\omega_{M}$ the dualizing complex on $M$ and by $\omAI$
its dual, that is, $\omAI=\rhom(\omega_M,\cor_{M})$. More generally, for a
morphism $f\cl M\to N$, one denotes by
$\omega_{M/N}\seteq\epb{f}\cor_N\simeq\omega_{M}\tens\opb{f}(\omAI[N])$ the
relative dualizing complex.
Recall that $\omega_M\simeq\ori_M\,[\dim M]$ where  $\ori_M$ is the orientation sheaf and $\dim M$ is the
dimension of $M$. Also recall the natural morphism of functors
\eq\label{eq:opbtpepb}
&&\omega_{M/N}\tens\opb{f}\to\epb{f}.
\eneq
We have the duality functors
\eqn
&&\RD'_MF=\rhom(F,\cor_{M}),\quad \RD_MF=\rhom(F,\omega_{M}).
\eneqn

For $F\in\Derb(\cor_M)$, one denotes by $\Supp(F)$ the support of $F$ and by $\SSi(F)$ its microsupport, a
closed $\R^+$-conic co-isotropic subset of $T^*M$.
For a morphism $f\cl M\to N$ and $G\in\Derb(\cor_{N})$, one says
that $f$ is non-characteristic
for $G$ if $f$ is non-characteristic for $\SSi(G)$.

We shall use systematically the functor $\muhom$, a variant of Sato's
microlocalization functor.
Recall that for a closed submanifold $N$ of $M$, there is a  functor
$\mu_N\cl\Derb(\cor_M)\to\Derb(\cor_{T^*_NM})$ constructed by Sato (see~\cite{SKK73}) and
for  $F_1,F_2\in\Derb(\cor_M)$, one defines in~\cite{KS90} the functor
\eqn
&&\muhom\cl  \Derb(\cor_M)^\rop\times\Derb(\cor_M)\to\Derb(\cor_{T^*M}),
\\[1.5ex]
&&\hs{10ex}\muhom(F_1,F_2)\eqdot\mu_\Delta\rhom(\opb{q_2}F_1,\epb{q_1}F_2)
\eneqn
where $q_1$ and $q_2$ are the first and second projection defined on $M\times M$ and $\Delta$ is the diagonal.
This sheaf is supported by $T^*_\Delta(M\times M)$ that we identify  with
$T^*M$ by the first projection
$T^*(M\times M)\simeq T^*M\times T^*M\to T^*M$.
Note that
\eq\label{eq:suppmuh}
&&\Supp(\muhom(F_1,F_2))\subset\SSi(F_1)\cap\SSi(F_2)
\eneq
and we have Sato's\ distinguished triangle, functorial in $F_1$ and $F_2$:
\eq\label{eq:satodt}
&&\hs{1ex}\reim{\pi}\muhom(F_1,F_2)\to\roim{\pi}\muhom(F_1,F_2)
\to\roim{\dpi}\bl\muhom(F_1,F_2)\vert_{\dT^*M}\br\to[+1].
\eneq
Moreover, we have the isomorphism
\eq\label{eq:muhoma}
&&\roim{\pi}\muhom(F_1,F_2)\simeq\rhom(F_1,F_2),
\eneq
and, assuming that $M$ is real analytic and $F_1$ is $\R$-constructible, the isomorphism
\eq\label{eq:muhomb}
&&\reim{\pi}\muhom(F_1,F_2)\simeq\RD_M'F_1\ltens F_2.
\eneq
In particular, assuming that
$F_1$ is $\R$-constructible and $\SSi(F_1)\cap\SSi(F_2)\subset T^*_MM$,
we have the natural isomorphism (see~\cite[Cor~6.4.3]{KS90})
\eq\label{eq:elliptshv}
\RD'_MF_1\ltens F_2\isoto\rhom(F_1,F_2).
\eneq
As recalled in the Introduction, assuming that $M$ is real analytic, we have
the formula (see~\cite[Prop.~8.4.14]{KS90}):
\eq\label{eq:mudual}
&&\RD_{T^*M}(\muhom(F_1,F_2))\simeq\muhom(F_2,F_1)\tens \opb{\pi_M}\omega_M
\text{ for $F_1,F_2\in\Derb_\Rc(\cor_M)$.}
\eneq

\section{Compositions of kernels}
\begin{notation}\label{not:12345}
\bnum
\item For a manifold $M$,
let $\delta_M\cl M\to M\times M$ denote the diagonal embedding, and
$\Delta_M$ the diagonal set of $M\times M$.
\item Let $M_i$ ($i=1,2,3$) be manifolds. For short, we write
$M_{ij}\eqdot M_i\times M_j$ ($1\leq i,j\leq3$),
$M_{123}=M_1\times M_2\times M_3$,
$M_{1223}=M_1\times M_2 \times M_2\times M_3$, etc.

\item We will often write for short $\cor_i$ instead of $\cor_{{M_i}}$ and $\cor_{\Delta_i}$
instead of $\cor_{\Delta_{M_i}}$
and similarly with $\omega_{M_i}$, etc.,
and with the index $i$ replaced with several indices $ij$,  etc.

\item We denote by $\pi_i$, $\pi_{ij}$, etc.\ the projection
$T^*M_{i}\to M_{i}$,
$T^*M_{ij}\to M_{ij}$, etc.

\item
We denote by $q_i$ the
projection $M_{ij}\to M_i$ or the projection $M_{123}\to M_i$ and by $q_{ij}$
the projection $M_{123}\to M_{ij}$. Similarly, we denote by $p_i$ the
projection $T^*M_{ij}\to T^*M_i$ or the projection $T^*M_{123}\to T^*M_i$ and
by $p_{ij}$ the projection $T^*M_{123}\to T^*M_{ij}$.

\item We also need to
introduce the maps $p_{j^a}$ or $p_{ij^a}$, the composition of $p_{j}$ or $p_{ij}$ and the antipodal
map on $T^*M_j$.  For example,
\eqn
&&p_{12^a}((x_1,x_2,x_3;\xi_1,\xi_2,\xi_3))=(x_1,x_2;\xi_1,-\xi_2).
\eneqn
\item
We let $\delta_2\cl M_{123} \to M_{1223}$ be the natural
diagonal embedding.
\enum
\end{notation}

\medskip
We consider the operation of composition of kernels:
\eq\label{eq:conv}
&&\ba{l}
\conv[2]\;\cl\;\Derb(\cor_{M_{12}})\times\Derb(\cor_{M_{23}})\to\Derb(\cor_{M_{13}})\\
\hs{10ex}\ba{rcl}(K_1,K_2)\mapsto K_1\conv[2] K_2&\eqdot&
\reim{q_{13}}(\opb{q_{12}}K_1\ltens\opb{q_{23}}K_2)\\
&\simeq&\reim{q_{13}}\opb{\delta_2}(K_1\letens K_2).\ea
\ea
\eneq
We will use a variant of $\circ$:
\eq\label{eq:star}
&&\ba{l}
\sconv[2]\;\cl\;\Derb(\cor_{M_{12}})\times\Derb(\cor_{M_{23}})
\to\Derb(\cor_{M_{13}})\\
\hs{10ex}(K_1,K_2)\mapsto K_1\sconv[2] K_2\eqdot
\roim{q_{13}}\bl\opb{q_{2}}\omega_{2}\tens\epb{\delta_2}(K_1\letens K_2)\br.
\ea
\eneq
We also have $\omega_{M_{123}/M_{1223}} \simeq \opb{q_{2}}\omAI[M_2]$ and we
deduce from~\eqref{eq:opbtpepb} a morphism
$\opb{\delta_2} \to\opb{q_{2}}\omega_{M_2}\tens\epb{\delta_2}$\,.
Using the morphism $\reim{\,p_{13}} \to \roim{\,p_{13}}$ we obtain a natural
morphism for $K_1\in \Derb(\cor_{M_{12}})$ and $K_2\in \Derb(\cor_{M_{23}})$:
\eq
&&K_1 \conv K_2  \to K_1 \sconv K_2.
\eneq
 It is an isomorphism if
$p_{12^a}^{-1}\SSi(K_1)\cap p_{23^a}^{-1}\SSi(K_2)\to T^*M_{13}$ is proper.

We define the composition of kernels on cotangent bundles
(see~\cite[Prop.~4.4.11]{KS90})
\eq\label{eq:aconv}
&&\hs{-0ex}\ba{rcl}
\aconv[2]\;\cl\;\Derb(\cor_{T^*M_{12}})\times\Derb(\cor_{T^*M_{23}})
&\to&\Derb(\cor_{T^*M_{13}})\\
(K_1,K_2)&\mapsto&K_1\aconv[2] K_2\eqdot
\reim{p_{13}}(\opb{p_{12^a}} K_1\ltens\opb{p_{23}} K_2)\\
&&\hs{8ex}\simeq\reim{p_{13^a}}(\opb{p_{12^a}} K_1
\ltens\opb{p_{23^a}} K_2).
\ea
\eneq
We also define the corresponding operations for subsets of cotangent bundles.
Let $A\subset T^*M_{12}$ and $B\subset T^*M_{23}$. We set
\eq\ba{rcl}\label{eq:convolution_of_sets}
&&A\atimes[2]B=\opb{ p_{12^a}}(A)\cap\opb{ p_{23}}(B),\\
&&A\aconv[2] B=p_{ 13}(A\atimes[2]B)\\
&&\hs{5ex}=
\scalebox{.90}{\parbox{50ex}{
$\biggl\{\ba[c]{l}\kern-.5ex(x_1,x_3;\xi_1,\xi_3)\in T^*M_{13}\;;\;
\text{there exists
$(x_2;\xi_2)\in T^*M_2$}\\[1ex]
\hs{6ex}\text{such that $(x_1,x_2;\xi_1,-\xi_2)\in A,
(x_2,x_3;\xi_2,\xi_3)\in B$}\ea\biggr\}$.}}
\ea\eneq

We have the following result which slightly strengthens Proposition~4.4.11
of~\cite{KS90} in which the  composition $\sconv$ is not used.

\begin{proposition}\label{prop:microcompkern}
For $G_1,F_1 \in \Derb(\cor_{M_{12}})$ and $G_2,F_2 \in \Derb(\cor_{M_{23}})$
there exists a canonical morphism 
\ro whose construction is similar to that of {\rm\cite[Prop.~4.4.11]{KS90}}
\rf:
\eqn
&&\muhom(G_1,F_1)\aconv[2]\muhom(G_2,F_2)\to\muhom(G_1\sconv[2]G_2,F_1\conv[2]F_2).
\eneqn
\end{proposition}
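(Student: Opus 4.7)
The plan is to mimic the construction of [KS90, Prop.~4.4.11] while taking advantage of the asymmetric roles that $\conv[2]$ and $\sconv[2]$ play on the two arguments of $\muhom$. The appearance of $\sconv[2]$ in the contravariant slot of the target is in fact helpful, because $\sconv[2]$ is built from $\epb{\delta_2}$ and $\roim{q_{13}}$, which are precisely the operations that sit naturally in the first argument of $\muhom$ under Grothendieck duality.

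The construction proceeds in three steps. \textbf{Step 1.} Start from the standard external product morphism (cf.\ [KS90, (4.4.10)])
\[
\muhom(G_1,F_1)\letens\muhom(G_2,F_2)\to\muhom(G_1\letens G_2,\,F_1\letens F_2)
\]
on $T^*M_{1223}$. \textbf{Step 2.} Restrict along the diagonal $\delta_2\cl M_{123}\to M_{1223}$. The functoriality of $\muhom$ under inverse image ([KS90, Prop.~4.4.5]), combined with the identification $\omega_{M_{123}/M_{1223}}\simeq\opb{q_2}\omAI[M_2]$ recalled just above \eqref{eq:aconv} and the transition morphism \eqref{eq:opbtpepb}, yields a canonical morphism on $T^*M_{123}$ landing in
\[
\muhom\bl\opb{q_2}\omega_2\tens\epb{\delta_2}(G_1\letens G_2),\;\opb{\delta_2}(F_1\letens F_2)\br.
\]
The twist $\opb{q_2}\omega_2$ enters on the first argument (only), because we promote $\opb{\delta_2}$ to the more favorable $\epb{\delta_2}$ there, while leaving $\opb{\delta_2}$ on the covariant slot. \textbf{Step 3.} Push forward along $q_{13}\cl M_{123}\to M_{13}$, using the functoriality of $\muhom$ under direct image: $\roim{q_{13}}$ acts on the first argument (covariantly, after the contravariant slot has been ``flipped''), assembling it to $G_1\sconv[2] G_2$, while $\reim{q_{13}}$ acts on the second, assembling it to $F_1\conv[2] F_2$. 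Comparing the induced correspondence between $T^*M_{1223}$ and $T^*M_{13}$ (obtained from \eqref{eq:fdfpi} applied to $\delta_2$ and $q_{13}$) with \eqref{eq:aconv}--\eqref{eq:convolution_of_sets}, one recognizes the source as $\muhom(G_1,F_1)\aconv[2]\muhom(G_2,F_2)$; the antipodal map on the $T^*M_2$ factor comes from the two copies of $M_2$ in $M_{1223}$ being collapsed by $\delta_2$.

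The main obstacle is Step~2: one must carefully track the morphism \eqref{eq:opbtpepb} so that the $\opb{q_2}\omega_2$-twist ends up attached to exactly the first argument, and one must check that the microlocal support estimate \eqref{eq:suppmuh} behaves well under pullback by $\delta_2$ without requiring any non-characteristic hypothesis (this is automatic since the entire construction stays within the canonical morphisms for $\muhom$ and never invokes base change). Once those verifications are done, the three canonical morphisms compose unconditionally to give the claimed morphism.
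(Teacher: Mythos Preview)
Your proposal is correct and follows essentially the same route as the paper. The paper's proof is a one-line pointer: in the construction of \cite[Prop.~4.4.11]{KS90} (which is exactly your three-step scheme of external product, diagonal restriction via Prop.~4.4.8, and push-forward), replace the fibered tensor product $F_2\letens_S G_2$ appearing in Prop.~4.4.8~(i) by $\epb{j}(F_2\letens G_2)\tens\omega_{X\times_SY/X\times Y}^{\otimes-1}$; since $\omega_{M_{123}/M_{1223}}^{\otimes-1}\simeq\opb{q_2}\omega_2$, this is precisely your Step~2 target $\opb{q_2}\omega_2\tens\epb{\delta_2}(G_1\letens G_2)$ in the contravariant slot, and the rest of your outline is the unmodified remainder of the proof of Prop.~4.4.11.
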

\begin{proof}
In Proposition~4.4.8~(i) of loc.\ cit., one may replace
 $F_2\letens_S G_2$ with
$\epb{j}(F_2\letens G_2)\tens\omega_{{X\times_SY}/{X\times Y}}^{\otimes-1}$.
Then the proof goes exactly as that of Proposition~4.4.11 in loc.\ cit.
\end{proof}


\medskip
Let $\Lambda_{ij}\subset T^*M_{ij}$ \lp$i=1,2,j=i+1$\rp\, be closed conic subsets
 and consider the condition:
\eq\label{hyp:compLagr}
&&\mbox{
the projection $p_{13}\cl\Lambda_{12} \atimes[2] \Lambda_{23}\To T^*{M_{13}}$ is proper.}
\eneq
We set
\eq\label{eq:compLagr}
&&\Lambda_{13}=\Lambda_{12}\aconv[2]\Lambda_{23}.
\eneq
\begin{corollary}\label{co:microcompkern}
Assume that $\Lambda_{ij}$ \lp$i=1,2,j=i+1$\rp\,
satisfy \eqref{hyp:compLagr}. We have a composition morphism
\eqn
&&\hs{-3ex}\rsect_{\Lambda_{12}}\muhom(G_1,F_1)\aconv[2]
\rsect_{\Lambda_{23}}\muhom(G_2,F_2)\to
\rsect_{\Lambda_{13}}\muhom(G_1\sconv[2]G_2,F_1\conv[2]F_2).
\eneqn
\end{corollary}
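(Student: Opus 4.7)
The plan is to deduce the corollary from Proposition~\ref{prop:microcompkern} by inserting the cutoff functors $\rsect_{\Lambda_{ij}}$ and exploiting the properness hypothesis~\eqref{hyp:compLagr} to show that the composition $\aconv[2]$ interacts well with these supports.

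First I would record the elementary fact that for a closed subset $Z$ of a topological space, $\rsect_Z H$ has support contained in $Z$, and dually the canonical morphism $\rsect_Z H \to H$ is an isomorphism whenever $\Supp(H)\subset Z$. Applying this with $H_i = \rsect_{\Lambda_{ij}}\muhom(G_i,F_i)$ gives $\Supp(H_i)\subset\Lambda_{ij}$. By standard support estimates for $\opb{(\cdot)}$ and $\ltens$, the sheaf $\opb{p_{12^a}} H_1 \ltens \opb{p_{23}} H_2$ is supported in $\opb{p_{12^a}}(\Lambda_{12})\cap\opb{p_{23}}(\Lambda_{23}) = \Lambda_{12}\atimes[2]\Lambda_{23}$. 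Hypothesis~\eqref{hyp:compLagr} says that $p_{13}$ restricted to this set is proper, so its image $\Lambda_{13}$ is closed and
\eqn
&&\Supp\bl H_1\aconv[2] H_2\br \;=\; \Supp\bl \reim{p_{13}}(\opb{p_{12^a}} H_1 \ltens \opb{p_{23}} H_2)\br \;\subset\; \Lambda_{13}.
\eneqn
Therefore the natural morphism $\rsect_{\Lambda_{13}}(H_1\aconv[2] H_2)\to H_1\aconv[2] H_2$ is an isomorphism.

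Next, I would construct the desired morphism as the composition of three arrows. The first is the inverse of the isomorphism just established:
\eqn
&&\rsect_{\Lambda_{12}}\muhom(G_1,F_1) \aconv[2] \rsect_{\Lambda_{23}}\muhom(G_2,F_2)
\;\isoto\; \rsect_{\Lambda_{13}}\bl \rsect_{\Lambda_{12}}\muhom(G_1,F_1) \aconv[2] \rsect_{\Lambda_{23}}\muhom(G_2,F_2)\br.
\eneqn
The second is obtained by applying $\rsect_{\Lambda_{13}}$ to the image of the two canonical morphisms $\rsect_{\Lambda_{ij}}\muhom(G_i,F_i) \to \muhom(G_i,F_i)$ under $\aconv[2]$ (the bifunctor $\aconv[2]$ preserves morphisms in each variable):
\eqn
&&\rsect_{\Lambda_{13}}\bl \rsect_{\Lambda_{12}}\muhom(G_1,F_1) \aconv[2] \rsect_{\Lambda_{23}}\muhom(G_2,F_2)\br
\;\to\; \rsect_{\Lambda_{13}}\bl \muhom(G_1,F_1) \aconv[2] \muhom(G_2,F_2)\br.
\eneqn
The third is $\rsect_{\Lambda_{13}}$ applied to the morphism furnished by Proposition~\ref{prop:microcompkern}:
\eqn
&&\rsect_{\Lambda_{13}}\bl \muhom(G_1,F_1) \aconv[2] \muhom(G_2,F_2)\br
\;\to\; \rsect_{\Lambda_{13}}\muhom(G_1\sconv[2] G_2,\, F_1\conv[2] F_2).
\eneqn
Composing these three yields the required arrow.

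The only nontrivial point in this plan is the support estimate needed to invert the first arrow, and this is where the hypothesis~\eqref{hyp:compLagr} is used essentially: without it, $\reim{p_{13}}$ would not be known to have image supported in $\Lambda_{13}$, and the whole diagram chase would break down. The rest is formal, relying only on the functoriality of $\aconv[2]$ in each variable, the functoriality of $\rsect_{\Lambda_{13}}$, and the bifunctorial morphism already built in Proposition~\ref{prop:microcompkern}.
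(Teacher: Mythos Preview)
Your proposal is correct and is exactly the standard argument the paper leaves implicit: the corollary is stated without proof immediately after Proposition~\ref{prop:microcompkern}, and your three-step composition (support estimate via~\eqref{hyp:compLagr}, functoriality of $\aconv[2]$, then $\rsect_{\Lambda_{13}}$ applied to the proposition's morphism) is precisely how one unpacks it. One minor sharpening: the inclusion $\Supp(\reim{p_{13}}(\cdots))\subset p_{13}(\Lambda_{12}\atimes[2]\Lambda_{23})$ holds for $\reim{}$ regardless of properness; the role of~\eqref{hyp:compLagr} is to guarantee that $\Lambda_{13}$ is \emph{closed} (proper maps being closed), so that $\rsect_{\Lambda_{13}}$ is defined.
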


\begin{convention}\label{conv:conv22}
In~\eqref{eq:conv}, we have introduced the composition $\conv[2]$ of kernels
$K_1\in \Derb(\cor_{M_{12}})$ and $K_2\in\Derb(\cor_{M_{23}})$. However we shall also use the notation
$M_{22}=M_2\times M_2$ and consider for example  kernels $L_1\in \Derb(\cor_{M_{122}})$ and
$L_2\in\Derb(\cor_{M_{223}})$. Then when writing $L_1\conv[2]L_2$ we mean
that the composition is taken
with respect to the last variable of $M_{22}$ for $L_1$ and the first variable for $L_2$.
In other words, set $M_4=M_2$ and consider
 $L_1$ and $L_2$ as objects of $\Derb(\cor_{M_{142}})$ and $\Derb(\cor_{M_{243}})$ respectively, in which case
the composition $L_1\conv[2]L_2$ is unambiguously defined.
\end{convention}

\section{Microlocal homology}

Let $M$ be a real manifold.
Recall that $\delta_M\cl M\hookrightarrow M\times M$ denotes the diagonal embedding.
We shall identify  $M$ with  the diagonal $\Delta_M$ of $M\times M$ and we sometimes write $\Delta$
instead of $\Delta_M$ if there is no risk of confusion. We shall identify
$T^*M$ with  $T^*_{\Delta}(M\times M)$ by the map
\eqn
&&
\delta^a_{T^*M}\cl \xymatrix@C=2.5ex{T^*M\ar@{^{(}->}[r]& T^*(M\times M)},\quad (x;\xi)\mapsto (x,x;\xi,-\xi).
\eneqn
We denote by $\dA$, $\omDA$ and $\omDAI$
the direct image
by $\delta_M$ of $\cor_M$, $\omA$ and $\omAI\seteq \rhom(\omA,\cor_M)$,
respectively.

The next definition is inspired by that of Hochschild homology on complex manifolds (see Introduction).

\begin{definition}\label{def:muHH}
Let $\Lambda$ be a closed conic subset of $T^*M$. We set
\eq&&\ba{rcl}\label{eq:mueu1B}
\MH[\Lambda]{M}&\eqdot&\rsect_\Lambda\opb{{(\delta^{a}_{T^*M})}}\muhom(\dA,\omDA),\\[.5ex]
\RMH[\Lambda]{M}&\eqdot&\rsect(T^*M;\MH[\Lambda]{M}),\\[.5ex]
\MHk[\Lambda]{M}&\eqdot& H^k(\RMH[\Lambda]{M})  =H^k(T^*M;\MH[\Lambda]{M}).
\ea\eneq
We call  $\MH[\Lambda]{M}$ the {\em microlocal homology}
of $M$ with support in $\Lambda$.
\end{definition}
We also write $\MH[]{M}$ instead of $\MH[T^*M]{M}$.

\begin{remark}\label{rem:1}
\bnum
\item
We have $\muhom(\dA,\omDA)\simeq(\delta^a_{T^*M})_*\pi_M^{-1}\omega_M$.
In particular, we have $\RMH[\Lambda]{M}\simeq\rsect_\Lambda(T^*M;\opb{\pi_M}\omega_M)$ and
$\RMH[]{M}\ \simeq\rsect(M;\omega_M)$. Assuming that $M$ is real analytic and $\Lambda$ is
a closed conic  subanalytic Lagrangian subset of
$T^*M$, we recover the space of Lagrangian cycles with support
in $\Lambda$ as  defined in~\cite[\S 9.3]{KS90}.
\item
The support of $\muhom(\dA,\omDA)$ is $T^*_{\Delta_M}(M\times M)$. Hence, we have
$\rsect_{\delta^a_{T^*M}\Lambda}\,\muhom(\dA,\omDA)\simeq(\delta^a_{T^*M})_*\MH[\Lambda]{M}$.
\item
 If $M$ is real analytic and $\Lambda$ is
a Lagrangian subanalytic closed conic subset,
then we have $H^k(\MH[\Lambda]{M})=0$ for $k<0$ (see~\cite[Prop.~9.2.2]{KS90}).
\enum
\end{remark}

 In the sequel, we denote by $\Delta_i$ (resp.\ $\Delta_{ij}$)
the diagonal subset $\Delta_{M_i}\subset M_{ii}$ (resp.\ $\Delta_{M_{ij}}\subset M_{iijj}$).

\begin{lemma}\label{le:compLagcyc}
We have natural morphisms:
\bnum
\item
$\omDA[12]\conv[22](\dA[2]\letens\omDA[3])\to\omDA[13]$,
\item
$\dA[13]\to\dA[12]\sconv[22](\omDAI[2]\letens\dA[3])$.
\enum
\end{lemma}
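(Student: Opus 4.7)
The plan is to compute the right-hand sides of (i) and (ii) as explicit sheaves on $M_{13}\times M_{13}$ and recognize each asserted morphism as induced by the counit (for (i)) or the unit (for (ii)) of the natural adjunctions for $a_{M_2}\cl M_2\to\rmpt$. The common preparation is the reordering $M_{12}\times M_{12}\simeq M_{11}\times M_{22}$ (swap of middle factors), under which $\omDA[12]\simeq\omDA[1]\letens\omDA[2]$ and $\dA[12]\simeq\dA[1]\letens\dA[2]$; the symmetric reordering for indices $13$ carries $\omDA[1]\letens\omDA[3]$ to $\omDA[13]$ and $\dA[1]\letens\dA[3]$ to $\dA[13]$.

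For (i), on $M_{11}\times M_{22}\times M_{33}$ the tensor $\opb{q_{12}}\omDA[12]\ltens\opb{q_{23}}(\dA[2]\letens\omDA[3])$ is supported on $\Delta_{M_1}\times\Delta_{M_2}\times\Delta_{M_3}$, and its $M_{22}$-factor $\omDA[2]\ltens\dA[2]$ simplifies to $\omDA[2]$ (both terms are $\delta_{M_2,*}$ of sheaves on $M_2$ with tensor value $\omega_{M_2}\otimes\cor=\omega_{M_2}$ on the common diagonal). Pushing forward by $\reim{q_{13}}$ and invoking the projection formula together with $\reim{a_{M_{22}}}\circ\delta_{M_2,*}\simeq\reim{a_{M_2}}$ yields
\begin{equation*}
\omDA[12]\conv[22](\dA[2]\letens\omDA[3])\simeq\delta_{M_{13},*}\bl\omega_{M_{13}}\tens_{\cor}\reim{a_{M_2}}\omega_{M_2}\br.
\end{equation*}
The counit $\reim{a_{M_2}}\omega_{M_2}=\reim{a_{M_2}}\epb{a_{M_2}}\cor\to\cor$ of $\reim{a_{M_2}}\dashv\epb{a_{M_2}}$ then induces the required morphism to $\omDA[13]=\delta_{M_{13},*}\omega_{M_{13}}$.

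For (ii), apply~\eqref{eq:star} to write $\dA[12]\sconv[22](\omDAI[2]\letens\dA[3])=\roim{q_{13}}(\opb{q_{M_{22}}}\omega_{M_{22}}\tens\epb{\delta_{M_{22}}}(\dA[12]\letens(\omDAI[2]\letens\dA[3])))$. The outer factors $\dA[1],\dA[3]$ pass through, and the core computation is $\epb{\delta_{M_{22}}}(\dA[2]\letens\omDAI[2])$. Writing $\dA[2]\letens\omDAI[2]\simeq(\delta_{M_2}\times\delta_{M_2})_{*}(\cor\etens\omega_{M_2}^{\otimes-1})$, proper base change along the Cartesian square whose fiber product is $M_2$ (mapping by $\delta_{M_2}$ into both $M_{22}$ and $M_2\times M_2$) reduces this to $\delta_{M_2,*}\epb{\delta_{M_2}}(\cor\etens\omega_{M_2}^{-1})$. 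As $\cor\etens\omega_{M_2}^{-1}$ is cohomologically locally constant, the formula $\epb{\delta_{M_2}}F\simeq\opb{\delta_{M_2}}F\tens\omega_{M_2/M_2\times M_2}$ with $\omega_{M_2/M_2\times M_2}\simeq\omega_{M_2}^{\otimes-1}$ gives $\delta_{M_2,*}\omega_{M_2}^{\otimes-2}$. Tensoring with $\opb{q_{M_{22}}}\omega_{M_{22}}$, whose restriction to $\Delta_{M_2}\subset M_{22}$ is $\omega_{M_2}^{\otimes2}$, causes the $\omega_{M_2}$-powers to cancel and leaves $\dA[1]\letens\dA[2]\letens\dA[3]$. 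Pushing forward by $\roim{q_{13}}$ (using $\roim{a_{M_{22}}}\circ\delta_{M_2,*}\simeq\roim{a_{M_2}}$) produces $\delta_{M_{13},*}(\cor_{M_{13}}\tens_{\cor}\roim{a_{M_2}}\cor_{M_2})$, and the unit $\cor\to\roim{a_{M_2}}\opb{a_{M_2}}\cor=\roim{a_{M_2}}\cor_{M_2}$ of $\opb{a_{M_2}}\dashv\roim{a_{M_2}}$ supplies the morphism from $\dA[13]=\delta_{M_{13},*}\cor_{M_{13}}$.

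The main obstacle is the $\epb{\delta_{M_{22}}}$ computation in (ii): the support $\Delta_{M_2}\times\Delta_{M_2}$ of $\dA[2]\letens\omDAI[2]$ is \emph{not} contained in the image of the diagonal $\delta_{M_{22}}$, so one cannot simply trade $\epb{\delta_{M_{22}}}$ for $\opb{\delta_{M_{22}}}$ up to an $\omega_{X/Y}$-twist. The correct path is proper base change down to the smaller diagonal $\delta_{M_2}\cl M_2\to M_2\times M_2$, along which $\cor\etens\omega_{M_2}^{-1}$ is non-characteristic and the exchange formula for $\epb{}$ does apply; the precise balancing of $\omega_{M_2}$-powers between $\omega_{M_2/M_2\times M_2}$ and the factor $\opb{q_{M_{22}}}\omega_{M_{22}}$ from~\eqref{eq:star} is what dictates that $\omDAI[2]$ (and not $\omDA[2]$) appears in (ii).
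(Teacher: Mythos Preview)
Your argument is correct and follows the same strategy as the paper: in both cases you reduce the left-hand side to $\reim{q_{1133}}\omDA[123]$ (for (i)) and $\roim{q_{1133}}\dA[123]$ (for (ii)), then invoke the appropriate adjunction. The only cosmetic difference is in (ii): the paper first absorbs the $\omega_{M_{22}}$-twist from the definition of $\sconv$ into the argument of $\epb{\delta_{22}}$, turning $\dA[2]\letens\omDAI[2]$ into $\dA[2]\letens\omDA[2]$ and then citing the clean isomorphism $\epb{\delta_{22}}(\dA[2]\letens\omDA[2])\simeq\dA[2]$, whereas you compute $\epb{\delta_{M_{22}}}(\dA[2]\letens\omDAI[2])$ directly and tensor with $\omega_{M_{22}}$ afterwards; the underlying base-change reduction to $\delta_{M_2}$ that you spell out is exactly what is needed to justify the paper's stated isomorphism as well.
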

\begin{proof}
Denote by $\delta_{22}$ the diagonal embedding $M_{112233}\hookrightarrow M_{11222233}$.

\noindent
(i) We have the morphisms
\eqn\label{eq:compLagcyc2}\nonumber
\omDA[12]\conv[22](\dA[2]\letens\omDA[3])
    &=& \reim{q_{1133}}\opb{\delta_{22}}(\omDA[12]\letens\dA[2]\letens\omDA[3])\\
&\simeq&
\reim{q_{1133}}\omDA[123]\\
&\to&\omDA[13].
\eneqn

\noindent
(ii) The isomorphism
\eqn
&&\epb{\delta_{22}}(\cor_{\Delta_2}\etens\omDA[2])\simeq\cor_{\Delta_2}
\eneqn
gives rise to the  isomorphisms
\eqn
\dA[12]\sconv[22](\omDAI[2]\letens\dA[3])&=&
\roim{q_{1133}}\bl\opb{q_{1133}}\omA[22]\tens
\epb{\delta_{22}}(\dA[12]\letens\omDAI[2]\letens\dA[3])\br\\
&\simeq&
\roim{q_{1133}}\epb{\delta_{22}}(\dA[1]\letens\omDA[2]\letens\dA[23]) \\
&\simeq&\roim{q_{1133}}\dA[123]
\eneqn
and the result follows by adjunction from the morphism
\eqn
&&\opb{q_{1133}}\dA[13]\simeq \dA[1]\letens\cor_{22}\letens \dA[3]
\to\dA[1]\letens\dA[2]\letens \dA[3]=\dA[123].
\eneqn
\end{proof}

\begin{proposition}\label{prop:complagcyc}
Let $M_i$ \lp$i=1,2,3$\rp\, be manifolds. 
We have a natural composition morphism 
\ro whose constructions will be given in the course  
of the proof{\rm\/):}
\eq\label{eq:compLagcyc}
&&\muhom(\dA[12],\omDA[12])\aconv[22]\muhom(\dA[23],\omDA[23])
\to
\muhom(\dA[13],\omDA[13]).
\eneq
In particular, let $\Lambda_{ij}$ be a closed conic subset of $T^*M_{ij}$ \lp $ij=12,13,23$\rp.
If $\Lambda_{12}\aconv[2]\Lambda_{23}\subset  \Lambda_{13}$, then we have a morphism
\eq\label{eq:compLagcyc-sectionsa}
\MH[\Lambda_{12}]{12}&\aconv[2]&\MH[\Lambda_{23}]{23}\to \MH[\Lambda_{13}]{13}.
\eneq
\end{proposition}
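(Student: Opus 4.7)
The plan is to deduce \eqref{eq:compLagcyc} from Proposition~\ref{prop:microcompkern} by a careful choice of kernels, and then translate the resulting morphism into one with target $\muhom(\dA[13],\omDA[13])$ by means of the two natural morphisms provided by Lemma~\ref{le:compLagcyc}.

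First, regarding $M_{11}$, $M_{22}$, $M_{33}$ as the three ambient manifolds in the setup of Proposition~\ref{prop:microcompkern}, I apply it to $G_1=\dA[12]$, $F_1=\omDA[12]$ on $M_{1122}$ and to $G_2=\omDAI[2]\letens\dA[3]$, $F_2=\dA[2]\letens\omDA[3]$ on $M_{2233}$. This yields
\eqn
&&\muhom(\dA[12],\omDA[12])\aconv[22]\muhom(\omDAI[2]\letens\dA[3],\dA[2]\letens\omDA[3])\\
&&\hs{4ex}\to\muhom\bl\dA[12]\sconv[22](\omDAI[2]\letens\dA[3]),\,\omDA[12]\conv[22](\dA[2]\letens\omDA[3])\br.
\eneqn
Now Lemma~\ref{le:compLagcyc}(ii) gives $\dA[13]\to\dA[12]\sconv[22](\omDAI[2]\letens\dA[3])$ and Lemma~\ref{le:compLagcyc}(i) gives $\omDA[12]\conv[22](\dA[2]\letens\omDA[3])\to\omDA[13]$; by the contravariance of $\muhom$ in its first argument and covariance in its second, the target of the previous display maps into $\muhom(\dA[13],\omDA[13])$.

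It remains to identify the second factor of the source of Step~1 with $\muhom(\dA[23],\omDA[23])$. The plan is to use the compatibility of $\muhom$ with external tensor products together with the formula
\eqn
&&\muhom(\delta_{M_2,*}A,\delta_{M_2,*}B)\simeq(\delta^a_{T^*M_2})_*\opb{\pi_{M_2}}\rhom(A,B)
\eneqn
for the closed diagonal pushforward (the direct generalization of Remark~\ref{rem:1}(i)). Applied with $(A,B)=(\omega_2^{\otimes-1},\cor_2)$ and with $(A,B)=(\cor_2,\omega_2)$, both $\muhom(\omDAI[2],\dA[2])$ and $\muhom(\dA[2],\omDA[2])$ become canonically isomorphic to $(\delta^a_{T^*M_2})_*\opb{\pi_{M_2}}\omega_{M_2}$; combined with the analogous isomorphism in the $M_3$-factor, this produces the canonical identification $\muhom(\omDAI[2]\letens\dA[3],\dA[2]\letens\omDA[3])\isoto\muhom(\dA[23],\omDA[23])$. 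Chaining the three pieces together yields \eqref{eq:compLagcyc}.

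For \eqref{eq:compLagcyc-sectionsa} one takes the sections supported on the Lagrangians $\Lambda_{ij}$ of the morphism just constructed and invokes Corollary~\ref{co:microcompkern}: the hypothesis $\Lambda_{12}\aconv[2]\Lambda_{23}\subset\Lambda_{13}$ makes the resulting composition of sections well-defined with values in $\MH[\Lambda_{13}]{13}$. The main obstacle will be the third step: checking that the external-product compatibility of $\muhom$ and the closed-diagonal pushforward formula cited above indeed combine to give a canonical identification that is compatible with the morphisms of Lemma~\ref{le:compLagcyc}; once that identification is in hand, the rest is a formal concatenation of morphisms already available.
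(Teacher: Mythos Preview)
Your proof is correct and follows essentially the same route as the paper's: apply Proposition~\ref{prop:microcompkern} with the kernels $(\dA[12],\omDA[12])$ and $(\omDAI[2]\letens\dA[3],\dA[2]\letens\omDA[3])$, then use Lemma~\ref{le:compLagcyc} to land in $\muhom(\dA[13],\omDA[13])$, having first identified $\muhom(\omDAI[2]\letens\dA[3],\dA[2]\letens\omDA[3])$ with $\muhom(\dA[23],\omDA[23])$.

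The only difference is in how that last identification is justified. You decompose via external products and invoke a general formula $\muhom(\oim{\delta}A,\oim{\delta}B)\simeq(\delta^a_{T^*M})_*\opb{\pi_M}\rhom(A,B)$, which you then worry about. The paper instead obtains the isomorphism in one stroke by tensoring both arguments of $\muhom$ with the invertible sheaf $\omAI[2]\letens\cor_{233}$:
\[
\muhom(\dA[23],\omDA[23])\simeq
\muhom\bl(\omAI[2]\letens\cor_{233})\ltens\dA[23],(\omAI[2]\letens\cor_{233})\ltens\omDA[23]\br
\simeq\muhom(\omDAI[2]\letens\dA[3],\dA[2]\letens\omDA[3]).
\]
This bypasses your compatibility concern entirely, since twisting by an invertible sheaf is manifestly functorial and the resulting identification is automatically compatible with the morphisms of Lemma~\ref{le:compLagcyc}. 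You may wish to replace your third step with this argument; it is shorter and removes the doubt you flag in your final paragraph.
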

\begin{proof}
Consider the morphism (see Proposition~\ref{prop:microcompkern} and Convention~\ref{conv:conv22})
\eqn
\muhom(\omDAI[2],\omDAI[2])\aconv[2]\muhom(\dA[23],\omDA[23])
   &\to&\muhom(\omDAI[2]\sconv[2]\dA[23],\omDAI[2]\conv[2]\omDA[23])\\
&\simeq&\muhom(\omDAI[2]\letens\dA[3],\dA[2]\letens \omDA[3]).
\eneqn
It induces an isomorphism
\eq
&&\muhom(\dA[23],\omDA[23])
\simeq \muhom(\omDAI[2]\letens\dA[3],\dA[2]\letens \omDA[3]).\label{eq:mhomtwist}
\eneq
 Note that this isomorphism is also obtained by
\eqn
\muhom(\dA[23],\omDA[23])&\simeq&
\muhom\bl(\omAI[2]\letens \cor_{233})\ltens \dA[23],
(\omAI[2]\letens \cor_{233})\ltens \omDA[23]\br\\
&\simeq&
\muhom(\omDAI[2]\letens\dA[3],\dA[2]\letens \omDA[3]).
\eneqn
Applying Proposition~\ref{prop:microcompkern}, we get a morphism:
\eq\label{eq:compLagcyc1}\nonumber
\muhom(\dA[12],\omDA[12])\aconv[22]\muhom(\dA[23],\omDA[23])&&\\
&&\hspace{-20ex}\to
\muhom(\dA[12]\sconv[22](\omDAI[2]\letens\dA[3]),
\omDA[12]\conv[22](\dA[2]\letens \omDA[3])).
\eneq
It remains to apply Lemma~\ref{le:compLagcyc}.
\end{proof}

\begin{corollary}\label{co:compLC}
Let   $\Lambda_{ij}$ \lp$i=1,2,j=i+1$\rp\,
satisfying~\eqref{hyp:compLagr} and let
$\Lambda_{13}=\Lambda_{12}\aconv[2]\Lambda_{23}$.
The composition of kernels  in \eqref{eq:compLagcyc-sectionsa}
induces a morphism
\eq\label{eq:compLagcyc-sectionsb}
\aconv[2]\hs{.5ex}\;:\;\RMH[\Lambda_{12}]{12}&\ltens&\RMH[\Lambda_{23}]{23}\to\RMH[\Lambda_{13}]{13}.
\eneq
In particular, each $\lambda\in\MHo[\Lambda_{12}]{12}$ defines a morphism
\eq\label{eq:compLagcyc-sectionsb2}
\lambda\aconv[2]\hs{.5ex}\cl\RMH[\Lambda_{23}]{23}\to\RMH[\Lambda_{13}]{13}.
\eneq
\end{corollary}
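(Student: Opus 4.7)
The plan is to obtain~\eqref{eq:compLagcyc-sectionsb} as the composition of two ingredients: the derived global sections of the sheaf-level morphism~\eqref{eq:compLagcyc-sectionsa} of Proposition~\ref{prop:complagcyc}, and a K\"unneth-type pairing that converts the external cup product on $\rsect$ into the operation $\aconv[2]$ on $\rsect$. Only the pairing requires any construction; the rest is then formal.

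Concretely, for any $A\in\Derb(\cor_{T^*M_{12}})$ supported in $\Lambda_{12}$ and any $B\in\Derb(\cor_{T^*M_{23}})$ supported in $\Lambda_{23}$, I would build a natural morphism
\[
\rsect(T^*M_{12};A)\ltens\rsect(T^*M_{23};B)\;\longrightarrow\;\rsect(T^*M_{13};A\aconv[2]B)
\]
factoring through $\rsect(T^*M_{123};\opb{p_{12^a}}A\ltens\opb{p_{23}}B)$. The map into this intermediate object is the pullback morphisms along $p_{12^a}$ and $p_{23}$, followed by the cup product on $\rsect(T^*M_{123};\scdot)$. The map out of it uses~\eqref{hyp:compLagr}: the support of $\opb{p_{12^a}}A\ltens\opb{p_{23}}B$ lies in $\Lambda_{12}\atimes[2]\Lambda_{23}$, on which $p_{13}$ is proper, so the canonical arrow $\reim{p_{13}}\to\roim{p_{13}}$ is an isomorphism on this sheaf, and the identity $\rsect(T^*M_{13};\roim{p_{13}}(\scdot))\simeq\rsect(T^*M_{123};\scdot)$ completes the construction. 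Specializing to $A=\MH[\Lambda_{12}]{12}$, $B=\MH[\Lambda_{23}]{23}$ and postcomposing with $\rsect(T^*M_{13};-)$ applied to~\eqref{eq:compLagcyc-sectionsa} yields~\eqref{eq:compLagcyc-sectionsb}.

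For~\eqref{eq:compLagcyc-sectionsb2}, an element $\lambda\in\MHo[\Lambda_{12}]{12}=H^0(\RMH[\Lambda_{12}]{12})$ is the same datum as a morphism $\cor\to\RMH[\Lambda_{12}]{12}$ in $\Derb(\cor)$; tensoring this with $\id_{\RMH[\Lambda_{23}]{23}}$ and composing with~\eqref{eq:compLagcyc-sectionsb} produces the desired morphism $\lambda\aconv[2]\scdot$\,.

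No substantial obstacle appears. The only step that requires care is the identification $\reim{p_{13}}\simeq\roim{p_{13}}$ on the relevant support, supplied precisely by~\eqref{hyp:compLagr}; once that is in place, both the K\"unneth pairing and the passage from the sheaf-level composition to the global one are standard.
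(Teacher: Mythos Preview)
Your proposal is correct and follows the same approach as the paper. The paper's own proof is a terse two sentences: it asserts that~\eqref{eq:compLagcyc-sectionsb} ``follows from~\eqref{eq:compLagcyc-sectionsa}'', leaving implicit exactly the K\"unneth-type pairing you spell out (pullback, cup product, and the identification $\reim{p_{13}}\simeq\roim{p_{13}}$ on the relevant support via~\eqref{hyp:compLagr}), and it derives~\eqref{eq:compLagcyc-sectionsb2} from the identification $H^0(X)\simeq\Hom[{\Derb(\cor)}](\cor,X)$, which is precisely your final paragraph.
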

\begin{proof}
These morphisms follow from~\eqref{eq:compLagcyc-sectionsa}.
The second assertion follows from the isomorphism
$H^0(X)\simeq\Hom[{\Derb(\cor)}](\cor,X)$ in the category $\Derb(\cor)$.
\end{proof}

\begin{theorem}\label{th:muconv}
\bnum
\item We have the isomorphisms
\eqn
\muhom(\dA,\omDA)&\simeq&
(\delta_{T^*M}^a)_*\pi_M^{-1}\rhom(\cor_M,\omega_M)\\
&\simeq&(\delta_{T^*M}^a)_*\pi_M^{-1}\omega_M.
\eneqn
\item
We have a commutative diagram
\eq
&&\ba{l}\xymatrix{
\muhom(\dA[12],\omDA[12])\aconv[22]\muhom(\dA[23],\omDA[23])\ar[r]\ar[d]^{\bwr}&
\muhom(\dA[13],\omDA[13])\ar[d]^{\bwr}\\
(\delta_{T^*M_{13}}^a)_*\bl\pi_{M_{12}}^{-1}\omega_{M_{12}}\aconv[2]
\pi_{M_{23}}^{-1}\omega_{M_{23}}\br\ar[r]&(\delta_{T^*M_{13}}^a)_*
\pi_{M_{13}}^{-1}\omega_{M_{13}}.
}\ea\label{dia:muconv}
\eneq
Here 
the top horizontal arrow of \eqref{dia:muconv}
is given  in {\rm Proposition~\ref{prop:complagcyc}}, and
the bottom horizontal arrow is induced by
\eqn
&&\hspace{-5ex}p_{12^a}^{-1}\pi_{M_{12}}^{-1}\omega_{M_{12}}\ltens \opb{p_{23}}\pi_{M_{23^a}}^{-1}\omega_{M_{23}}
\simeq \pi_{M_1}^{-1}\omega_{M_1}\letens
\pi_{M_2}^{-1}(\omega_{M_2}\ltens\omega_{M_2})\letens
\pi_{M_3}^{-1}\omega_{M_3},\\
&&\hspace{-5ex}\pi_{M_2}^{-1}(\omega_{M_2}\ltens\omega_{M_2})\simeq\omega_{T^*M_2},\\
&&\hspace{-5ex}\reim{p_{13}}\bl \pi_{M_1}^{-1}\omega_{M_1}\letens
\omega_{T^*M_2}\letens\pi_{M_3}^{-1}\omega_{M_3}\br
\To \pi_{M_1}^{-1}\omega_{M_1}\letens \pi_{M_3}^{-1}\omega_{M_3}.
\eneqn
\enum
\end{theorem}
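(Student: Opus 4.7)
For (i), the plan is to invoke the standard identification $\muhom(\cor_N,F)\simeq\iota_*\mu_N F$ for a closed embedding $j\cl N\hookrightarrow X$ of $C^\infty$ submanifolds (with $\iota\cl T^*_NX\hookrightarrow T^*X$; see~\cite{KS90}), specialized to $(X,N,F)=(M\times M,\Delta_M,\omDA)$. The right-hand side becomes $(\delta^a_{T^*M})_*\opb{\pi_M}\omega_M$ via the Fourier-Sato transform of the specialization $\nu_{\Delta}(\omDA)$, which, since $\omDA$ is supported on $\Delta$, is concentrated on the zero section of $T_\Delta(M\times M)$ with value $\omega_M$. The first isomorphism of (i) is then immediate from $\rhom(\cor_M,\omega_M)\simeq\omega_M$; both isomorphisms are the ones already recorded in Remark~\ref{rem:1}(i).

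For (ii), the plan is to decompose the top horizontal arrow of~\eqref{dia:muconv} into the three elementary constituents used in the proof of Proposition~\ref{prop:complagcyc}: (a) the twist~\eqref{eq:mhomtwist} applied to the second factor; (b) the microcomposition of Proposition~\ref{prop:microcompkern}; and (c) the two morphisms of Lemma~\ref{le:compLagcyc} applied to source and target. Via the vertical identifications of (i), step (a) reduces to the identity on $\opb{\pi_{M_{23}}}\omega_{M_{23}}$ using a Künneth formula for $\muhom$ and the biduality $\rhom(\omAI[2],\cor_{M_2})\simeq\omega_{M_2}$, which forces $\muhom(\omDAI[2],\dA[2])\simeq(\delta^a_{T^*M_2})_*\opb{\pi_{M_2}}\omega_{M_2}$. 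Step (b) produces the canonical pairing on cotangent bundles; combined with the natural identifications
\[
\opb{p_{12^a}}\opb{\pi_{M_{12}}}\omega_{M_{12}}\ltens\opb{p_{23}}\opb{\pi_{M_{23}}}\omega_{M_{23}}\simeq\opb{\pi_{M_{123}}}\bl\omega_{M_1}\letens(\omega_{M_2}\ltens\omega_{M_2})\letens\omega_{M_3}\br
\]
and $\opb{\pi_{M_2}}(\omega_{M_2}\ltens\omega_{M_2})\simeq\omega_{T^*M_2}$, this recovers the first two arrows of the bottom row. Step (c), namely the proper direct image $\reim{q_{1133}}$ coupled with the morphism~\eqref{eq:opbtpepb} for $\delta_2$, then translates under (i) into the fiber integration $\reim{p_{13}}$ along $T^*M_2$ of the last arrow of the bottom row.

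The main obstacle is the precise verification of step (c): that, under the identifications of (i), the proper direct image $\reim{q_{1133}}$ along $M_{22}$ used in Lemma~\ref{le:compLagcyc} corresponds to the fiber integration $\reim{p_{13}}$ along $T^*M_2$ in the bottom row. This is a compatibility statement of the functor $\muhom$ with proper direct images in the composition variable $M_2$, which via Fourier-Sato reduces to fiber integration on the vector bundle $T^*M_2\simeq T^*_{\Delta_2}M_{22}$. The appearance of $\omega_{T^*M_2}$ in the bottom row is exactly the twist that one picks up when passing, through this Fourier-Sato duality, from fiber integration over $M_{22}$ to fiber integration over its conormal bundle $T^*M_2$.
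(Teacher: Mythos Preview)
Your treatment of part~(i) is fine and matches the paper, which simply declares it obvious; you have supplied the standard microlocalization argument behind that declaration.

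For part~(ii) your route diverges from the paper's, and your own diagnosis of the ``main obstacle'' is accurate: step~(c) is where your plan is incomplete. You propose to track each ingredient of the top arrow (twist, microcomposition, Lemma~\ref{le:compLagcyc}) through the identification of~(i) and match it against the corresponding piece of the bottom arrow. The difficulty is that Proposition~\ref{prop:microcompkern} and Lemma~\ref{le:compLagcyc} together involve the interaction of $\muhom$ with $\epb{\delta_2}$, the tensor twist by $\opb{q_2}\omega_2$, and $\reim{q_{1133}}$ (resp.\ $\roim{q_{1133}}$), and you would need a precise compatibility of $\muhom$ with this whole package that, after Fourier--Sato, becomes the integration morphism $\reim{p_{13}}(\,\cdot\,\otimes\omega_{T^*M_2})\to(\,\cdot\,)$. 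Such a compatibility is not available off the shelf in~\cite{KS90}, and your last paragraph is a heuristic rather than an argument.

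The paper circumvents exactly this obstacle. First it uses the K\"unneth isomorphism for $\muhom$ (\cite[Prop.~4.4.8]{KS90}) to split off the $M_1$- and $M_3$-factors on both sides, reducing to $M_1=M_3=\rmpt$. In that reduced situation the whole diagram lives over a single manifold $M=M_2$, and after passing by adjunction to $\Derb(\cor_M)$ every object in sight is concentrated in the single degree $-\dim M$. Commutativity is then a local question, and one checks it directly when $M$ is a Euclidean space. This reduction-plus-degree argument is what replaces your missing general compatibility; if you want to salvage your approach, you will still need some such localization step to pin down the fiber-integration identification in~(c).
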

\Proof
(i) is obvious.

\medskip\noindent 
(ii)--(a) By~\cite[Prop.~4.4.8]{KS90}, we have natural morphisms for $(i,j)=(1,2)$ or $(i,j)=(2,3)$:
\eqn
\muhom(\cor_{\Delta_{{i}}},\omDA[{i}])\letens\muhom(\cor_{\Delta_{{j}}},\omDA[{j}])&\to&\muhom(\cor_{\Delta_{{ij}}},\omDA[{ij}])
\eneqn
and it follows from (i) that these morphisms are isomorphisms. These isomorphisms give rise to the isomorphism
\eqn
\muhom(\dA[12],\omDA[12])\aconv[22]\muhom(\dA[23],\omDA[23])&\simeq&\\
&&\hs{-40ex}
\muhom(\dA[1],\omDA[1])\letens\bl\muhom(\dA[2],\omDA[2])\aconv[22]\muhom(\dA[2],\omDA[2])\br\letens\muhom(\dA[3],\omDA[3])
\eneqn
Similarly, we have an isomorphism
\eqn
\pi_{M_{12}}^{-1}\omega_{M_{12}}\aconv[2]\pi_{M_{23}}^{-1}\omega_{M_{23}}&\simeq&
\pi_{M_{1}}^{-1}\omega_{M_{1}}\etens  \bl \pi_{M_{2}}^{-1}\omega_{M_{2}}\aconv[2]\pi_{M_{2}}^{-1}\omega_{M_{2}}\br\etens\pi_{M_{1}}^{-1}\omega_{M_{1}}.
\eneqn
Hence, we are reduced to the case where $M_1=M_3=\rmpt$, 
which we shall assume now.

\medskip\noindent  
(ii)--(b) We change our notations and we set:
\eqn
&&M\eqdot M_2, \quad Y\eqdot M\times M,\\
&&\delta_M\cl M\into Y\mbox{ the diagonal embedding},\,\Delta_M=\delta_M(M),\\
&&j\cl Y\into Y\times Y \mbox{ the diagonal embedding, $\Delta_Y=\delta_Y(Y)$,}\\
&&\delta_{T^*M}^a\cl T^*M\into T^*Y, \, (x;\xi)\mapsto (x,x;\xi,-\xi),\\
&& \delta_{T^*Y}^a\cl T^*Y\into T^*Y\times T^*Y,\\
&&p\cl T^*Y\to\rmpt\mbox{ the projection,}\\
&&a_Y\cl Y\to\rmpt\mbox{ the projection.}
\eneqn
With these new notations, the composition $\aconv[22]$ will be denoted  by  $\aconv[T^*Y]$.

Consider the diagram~\ref{diag:4415} similar to Diagram~(4.4.15) of ~\cite{KS90}
\eq\label{diag:4415} 
&&\ba{l}\xymatrix{
T^*M\times T^*M\ar@{^{(}->}[r]^-i
&T^*Y\times T^*Y&\,T^*Y\ar@^{_{(}->}[l]_-{\delta_{T^*Y}^a}\ar[rrdd]^(.6)p&\\
&T^*Y\times_Y T^*Y\ar[u]_-{j_\pi}\ar[d]^-{j_d}
&\,T^*_{\Delta_Y}(Y\times Y)\ar@^{_{(}->}[l]_-{\tw s}\ar[d]^-{\pi_Y}\ar[u]_-{p_1}^-\sim\\
&T^*Y\ar@{}[ur]|-{\displaystyle\Box}&Y\ar[l]_s\ar[rr]_-{a_Y}&&\rmpt.
}\ea\eneq
Here, $i$ is the canonical embedding
induced by $\delta_{T^*M}^a$, 
$p_1$ is induced by the first projection $T^*Y\times T^*Y\to T^*Y$, $s\cl Y\into T^*Y$ is the zero-section embedding and $\tw s$ is the natural embedding. Note that the square labelled by $\square$ is Cartesian. 
We have 
\eqn
\reim{p}\circ{(\delta_{T^*Y}^a)}^{-1}&\simeq& \reim{a_Y}\circ \reim{\pi_Y}\circ\opb{p_1}\circ\opb{{(\delta_{T^*Y}^a)}}\\
&\simeq&  \reim{a_Y}\circ \reim{\pi_Y}\circ\opb{\tw s}\circ\opb{j_\pi}\\
&\simeq& \reim{a_Y}\circ \opb{s}\circ \reim{j_d}\circ \opb{j_\pi}.
\eneqn
Therefore,
\eqn
&&\muhom(\dA[M],\omDA[M])\aconv[T^*Y]\muhom(\dA[M],\omDA[M])\\
&&\hs{8ex}\simeq
\reim{p}{(\delta_{T^*Y}^a)}^{-1}
\bl\muhom(\dA[M],\omDA[M])\letens\muhom(\dA[M],\omDA[M])\br\\
&&\hs{16ex}\simeq\reim{a_Y}\opb{s}\reim{j_d}\opb{j_\pi}\muhom(\dA[M]\letens\dA[M],\omDA[M]\letens\omDA[M]).
\eneqn
Hence, by adjunction, to give a morphism
\eqn
&&\muhom(\dA[M],\omDA[M])\aconv[T^*Y]\muhom(\dA[M],\omDA[M])\to\cor
\eneqn
is equivalent to giving a morphism in $\Derb(\cor_{Y})$
\eq
&&
 \opb{s}\reim{j_d}\opb{j_\pi}\muhom(\dA[M]\letens\dA[M],\omDA[M]\letens\omDA[M])
\to\epb{a_Y}\cor_\rmpt.
\label{eq:12}
\eneq
Note that the left hand side of \eqref{eq:12}
is supported on $\Delta_M$.
Hence in order to give a morphism \eqref{eq:12},
it is necessary and sufficient to give a morphism
in $\Derb(\cor_{M})$
\eq
&&
\opb{{\delta_{M}}}
\opb{s}\reim{j_d}\opb{j_\pi}\muhom(\dA[M]\letens\dA[M],\omDA[M]\letens\omDA[M])
\to \epb{{\delta_{M}}}\epb{a_Y}\cor_\rmpt.
\label{eq:13}
\eneq
Hence, it is enough to check the commutativity of 
the upper square in the following diagram in $\Derb(\cor_M)$ 
\eq
&&\hs{-5ex}\ba{l}\xymatrix{
\opb{{\delta_{M}}}\opb{s}\reim{j_d}\opb{j_\pi}\muhom(\dA[M]\letens\dA[M],\omDA[M]\letens\omDA[M])\ar[r]\ar[d]^-\sim
&\epb{\delta_{M}}\epb{a_Y}\cor_\rmpt\ar[d]^-\id\\
\opb{{\delta_{M}}}\opb{s}\reim{j_d}\opb{j_\pi}i_*
\bl\opb{\pi_M}\omega_M\letens\opb{\pi_M}\omega_M\br\ar[r]\ar[d]^-\sim
&\epb{\delta_{M}}\epb{a_Y}\cor_\rmpt\ar[d]^-\sim\\
\omega_M\ar[r]^{\id}&\omega_M.
}\ea\label{dia:muconvB}
\eneq
The top horizontal arrow is constructed by  the chain of morphisms (see~\cite[\S~4.4]{KS90}):
\eqn
&&\ba{l}\reim{j_d}\opb{j_\pi}\muhom(\dA[M]\letens\dA[M],\omDA[M]\letens\omDA[M])\\
\hs{15ex}\to\muhom(\epb{j}(\dA[M]\letens\dA[M])\ltens\omega_Y,\opb{j}(\omDA[M]\letens\omDA[M]))\\
\hs{15ex}\simeq\muhom(\omDA[M],\omDA[M]\tens \omDA[M])\simeq 
{(\delta_{T^*M}^a)}_*\opb{\pi_M}\omega_M
\ea\eneqn
and 
\eq\label{eq:morks90}
&&\ba{l}
\opb{{\delta_{M}}}\opb{s}\reim{j_d}\opb{j_\pi}
\muhom(\dA[M]\letens\dA[M],\omDA[M]\letens\omDA[M])\\
\hs{15ex}\to \opb{{\delta_{M}}}\opb{s}{(\delta_{T^*M}^a)}_*\opb{\pi_M}\omega_M
\simeq\omega_M.\ea\eneq
Hence, the commutativity of the diagram~\eqref{dia:muconvB} is reduced to
the commutativity of the diagram below:
\eq
&&\hs{-5ex}\ba{l}\xymatrix{
\opb{\delta_{M}}\opb{s}\reim{j_d}\opb{j_\pi}\muhom(\dA[M]\letens\dA[M],\omDA[M]\letens\omDA[M])\ar[d]\ar[drr]^-\lambda\\
\opb{{\delta_{M}}}\opb{s}\reim{j_d}\opb{j_\pi}i_*
\bl\opb{\pi_M}\omega_M\letens\opb{\pi_M}\omega_M\br
\ar[rr]^-\sim&&\omega_M.
}\ea\label{dia:muconvC}
\eneq
where the morphism $\lambda$ is given by the morphisms in~\eqref{eq:morks90}.
All terms of \eqref{dia:muconvC}  are concentrated at the degree $-\dim M$.
Hence the commutativity of \eqref{dia:muconvC}
is a local problem in $M$ and we can assume that $M$ is a Euclidean space.
We can checked directly in this case.
\QED

\begin{remark}\label{rem:muconv2}
Theorem~\ref{th:muconv} may be applied as follows.
Let $\Lambda_{ij}$ be a closed conic subset of $T^*M_{ij}$ ($i=1.2$, $j=i+1$).
Assume~\eqref{hyp:compLagr}, that is, the projection $p_{13}\cl\Lambda_{12} \atimes[2] \Lambda_{23}\To T^*{M_{13}}$ is proper and set
$\Lambda_{13}=\Lambda_{12}\aconv[2]\Lambda_{23}$.
Let $\lambda_{ij}\in\MHo[\Lambda_{ij}]{M_{ij}}\simeq H^0_{\Lambda_{ij}}(T^*M_{ij};\opb{\pi}\omega_{ij})$. Then
\eq\label{eq:convcap}
&&\lambda_{12}\aconv[2]\lambda_{23}=\int_{T^*M_2}\lambda_{12}\cup\lambda_{23}
\eneq
where the right hand-side is obtained as follows.
Set $\Lambda\eqdot\Lambda_{12}\atimes[2]\Lambda_{23}$ and consider the morphisms
\eqn
&&H^0_{\Lambda_{12}}(T^*M_{12};\opb{\pi}\omega_{12})\times H^0_{\Lambda_{23}}(T^*M_{23};\opb{\pi}\omega_{23})\\
&&\hspace{10ex}\to H^0_\Lambda(T^*M_{123};\opb{\pi}\omega_{1}\letens\omega_{T^*M_2}\letens\opb{\pi}\omega_{3})\\
&&\hspace{10ex}\to H^0_{\Lambda_{13}}(T^*M_{13};\opb{\pi}\omega_{13}).
\eneqn
The first morphism is the cup  product and the second one is the integration morphism with respect to $T^*M_2$.
\end{remark}

\section{Microlocal Euler classes of trace kernels}
In this section, we often write $\Delta$ instead of $\Delta_M$.
\begin{definition}\label{def:traceker}
A {\em trace kernel} $(K,u,v)$ on $M$ is the data of $K\in\Derb(\cor_{M\times M})$ together with morphisms
\eq\label{def:dp1}
&&\dA[]\to[\ u\ ] K\quad \text{and}\quad K\to[\ v\ ]\omDA[].
\label{eq:dpmor01}\eneq
\end{definition}
In the sequel, as far as there is no risk of confusion, we simply write $K$ instead of $(K,u,v)$.

For a trace kernel $K$ as above, we set
\eq\label{def:SSidp}
&&\SSid(K)\eqdot\SSi(K)\cap T^*_\Delta (M\times M)=(\delta_{T^*M}^a)^{-1}\SSi(K).
\eneq
(Recall that one often identifies $T^*M$ and  $T^*_\Delta(M\times M)$ by
$\delta_{T^*M}^a\cl T^*M\hookrightarrow T^*M\times T^*M$.)

\begin{definition}\label{def:dpmueu}
Let $(K,u,v)$ be a trace kernel.
\banum
\item
The morphism $u$ defines  an element
$\tilde{u}$ in $H^0_{\SSid(K)}(T^*M;\muhom(\dA[],K))$
and the {\em microlocal Euler class} $\mueu_M(K)$ of $K$ is the image
of  $\tilde{u}$ by the morphism
$\muhom(\dA[],K)\to \muhom(\dA[],\omDA[])$
associated with the morphism $v$.
\item
 Let $\Lambda$ be a closed conic subset of $T^*M$ containing $\SSid(K)$. One denotes
by $\mueu_\Lambda(K)$ the image of
 $\tilde{u}$ in $H^0_\Lambda\bl T^*M;\muhom(\dA[],\omDA[])\br$.
\eanum
\end{definition}
Hence,
\eq\label{eq;defmueu}
&&  \mueu_\Lambda(K)\in\MHo[\Lambda]{M}\simeq H^0_{\Lambda}(T^*M;\opb{\pi}\omega_M).
\eneq

Let $\tilde{v}$ be the element of $H^0_{\SSid(K)}(T^*M;\muhom(K,\omDA[]))$
induced by $v$.
Then the microlocal Euler class $\mueu_M(K)$ of $K$ coincides with the image
of $\tilde{v}$ by the morphism
$\muhom(K,\omDA)\to \muhom(\dA[],\omDA[])$
associated with the morphism $u$, which can be easily seen by the commutative diagram:

\eqn
&&\xymatrix{
(\delta_{T^*M}^a)^{-1}\muhom(K,K)\ar[r]^-v\ar[d]^-u
&(\delta_{T^*M}^a)^{-1}\muhom(K,\omDA[])\ar[d]^-u\\
(\delta_{T^*M}^a)^{-1}\muhom(\dA[],K)\ar[r]^-v
&(\delta_{T^*M}^a)^{-1}\muhom(\dA[],\omDA[]).
}
\eneqn
One denotes by $\eu(K)$ the restriction of $\mueu(K)$ to the zero-section $M$ of $T^*M$ and
calls it the {\em Euler class} of $K$. Hence
\eq\label{eq;defeu}
&&  \eu_M(K)\in H^0_{\Supp(K)\cap\Delta}(M;\omega_M).
\eneq
It is nothing but the class induced by the composition
$\dA[M]\to K\to \omDA[M]$.

We say that $L\in\Derb(\cor_M)$ is {\em invertible} if
$L$ is locally isomorphic to $\cor_M[d]$ for some $d\in\Z$.
Then, $L^{\otimes-1}\seteq\rhom(L,\cor_M)$
is also invertible and $L\ltens L^{\otimes-1}\simeq\cor_M$.

\Prop\label{prop:ambg}
Let $L$ be an invertible object in $\Derb(\cor_M)$
and $K$ a trace kernel.
Then $K\ltens(L\letens L^{\otimes -1})$ is a trace kernel and
$\mueu\bl K\ltens(L\letens L^{\otimes -1})\br=\mueu(K)$.
\enprop
\Proof
$L\letens L^{\otimes -1}$ is canonically isomorphic to $\cor_{M\times M}$ on a neighborhood of
the diagonal set $\Delta_M$ of $M\times M$.
\QED

\begin{remark}
Of course, we could also have defined a trace kernel
as a sequence of morphisms
\eq\label{eq:traceomDAI}
&&\omDAI\to \tK\to\dA.
\eneq
When treating sheaves, both  definitions   would give the same microlocal Euler class
by taking $K=\tK\tens(\cor_M\letens\omega_M)$. However, when working with $\sho$-modules or with
$\DQ$-modules as in~\cite{KS12}, the two constructions give different classes. Note that
we have chosen an analogue of \eqref{eq:traceomDAI}  in \cite{KS12}.
\end{remark}

\subsubsection*{Trace kernels for constructible sheaves}
Let us denote by  $\Derb_\cc(\cor_M)$ the full  triangulated  subcategory of
$\Derb(\cor_M)$ consisting of cohomologically constructible  sheaves (see~\cite[\S~3.4]{KS90}).

\begin{lemma}\label{lem:HHmor12}
Let $F\in\Derb_\cc(\cor_M)$.
There are natural morphisms in  $\Derb_\cc(\cor_{M\times M})$:
\eq
&&\dA\to F\letens\RD_M F,\label{eq:HHmor01}\\
&&F\letens\RD_M F\to\omDA.\label{eq:HHmor02}
\eneq
\end{lemma}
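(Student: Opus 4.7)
The plan is to build the two morphisms by different routes: morphism \eqref{eq:HHmor02} is formal and works for any $F \in \Derb(\cor_M)$, while \eqref{eq:HHmor01} genuinely uses the cohomological constructibility of $F$.

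For \eqref{eq:HHmor02} I would combine the unit of the $(\opb{\delta_M}, \oim{\delta_M})$ adjunction with the standard evaluation pairing. The unit produces
\[
F \letens \RD_M F \to \oim{\delta_M}\opb{\delta_M}(F \letens \RD_M F) \simeq \oim{\delta_M}(F \ltens \RD_M F),
\]
and composing with $\oim{\delta_M}$ applied to the evaluation $F \ltens \RD_M F \to \omega_M$ then yields the desired morphism $F \letens \RD_M F \to \omDA$.

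For \eqref{eq:HHmor01} I would instead use the adjunction $(\oim{\delta_M}, \epb{\delta_M})$ (available since $\delta_M$ is a closed embedding, so $\oim{\delta_M} = \eim{\delta_M}$), reducing the construction to producing a natural morphism $\cor_M \to \epb{\delta_M}(F \letens \RD_M F)$. My plan is to establish the canonical isomorphism $\epb{\delta_M}(F \letens \RD_M F) \simeq \rhom(F, F)$ for $F \in \Derb_\cc(\cor_M)$ and then transport the identity morphism $\cor_M \to \rhom(F, F)$ across it. To obtain this isomorphism, I would first identify
\[
F \letens \RD_M F \simeq \rhom(\opb{q_2} F,\, \epb{q_1} F)
\]
on $M \times M$ via the standard duality formula $\rhom(F, G) \simeq \RD'_M F \ltens G$ available for cohomologically constructible $F$, together with the smoothness identity $\epb{q_1} F \simeq \opb{q_1} F \ltens \opb{q_2}\omega_M$. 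Then, applying $\epb{\delta_M}$, using its compatibility with $\rhom$ for cc arguments, and invoking $q_1\circ\delta_M = q_2\circ\delta_M = \id_M$, collapses the right-hand side to $\rhom(F, F)$. The main technical obstacle will be this identification, where one must carefully track the $\omega_M$ factors arising from the relative dualizing complex $\omega_{q_1} = \opb{q_2}\omega_M$ and verify that the resulting morphism indeed realizes the intended ``coevaluation'' associated with the identity of $F$.
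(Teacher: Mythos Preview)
Your overall strategy coincides with the paper's: both morphisms are obtained by adjunction, \eqref{eq:HHmor02} from the evaluation $\opb{\delta_M}(F\letens\RD_MF)\simeq F\ltens\RD_MF\to\omega_M$, and \eqref{eq:HHmor01} from the identity $\cor_M\to\rhom(F,F)\simeq\epb{\delta_M}(F\letens\RD_MF)$ followed by the $(\eim{\delta_M},\epb{\delta_M})$-adjunction. The paper simply asserts the latter isomorphism and stops there.

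There is, however, a genuine error in your justification of that isomorphism. The ``standard duality formula'' $\rhom(F,G)\simeq\RD'_MF\ltens G$ you invoke is \emph{not} valid for arbitrary cohomologically constructible $F$. For instance, take $M=\R$, $F=\cor_{[0,\infty)}$ and $G=\cor_{\{0\}}$: then $\RD'_MF\simeq\cor_{(0,\infty)}$, so $\RD'_MF\ltens G=0$, while $\rhom(F,G)\simeq\rsect_{[0,\infty)}\cor_{\{0\}}\simeq\cor_{\{0\}}$. (In the paper's own notation, the isomorphism~\eqref{eq:elliptshv} requires the microsupport condition $\SSi(F_1)\cap\SSi(F_2)\subset T^*_MM$, not merely constructibility of $F_1$.) The intermediate identification $F\letens\RD_MF\simeq\rhom(\opb{q_2}F,\epb{q_1}F)$ you are after is nevertheless correct, but it is a K\"unneth-type statement exploiting that the two arguments live on different factors of $M\times M$; it does not follow from the false pointwise formula. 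You can either prove that K\"unneth isomorphism directly, or bypass the $M\times M$ step entirely and obtain $\epb{\delta_M}(F\letens\RD_MF)\simeq\rhom(F,F)$ via $\epb{\delta_M}\simeq\RD_M\circ\opb{\delta_M}\circ\RD_{M\times M}$ together with biduality and the K\"unneth formula for $\RD$ on cc sheaves.
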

In other words, an object $F\in\Derb_\cc(\cor_M)$ defines naturally a trace kernel on $M$.
\begin{proof}
(i) We have
\eqn
\cor_M&\to&\rhom(F,F)\simeq \epb{\delta}(F\letens \RD_M F).
\eneqn
Hence, the result follows by adjunction.

\noindent
(ii) The morphism \eqref{eq:HHmor02} may be deduced from~\eqref{eq:HHmor01} by duality, or
by adjunction  from the morphism
\eqn
\opb{\delta}(F\letens \RD_M F)\to\omega_M.
\eneqn
\end{proof}
\begin{notation}
We shall denote by $\HK(F)$ the trace kernel associated with $F\in\Derb_\cc(\cor_M)$, that is the data of
$F\letens \RD_M F$ and the morphisms~\eqref{eq:HHmor01},~\eqref{eq:HHmor02}.
Note that we have always $\SSid(\HK(F))\subset\SSi(F)$ and
the equality holds
if $M$ is real analytic and $F$ is $\R$-constructible.
\end{notation}

We have the chain of morphisms
\eqn
\muhom(F,F)
&\simeq& \opb{{(\delta^{a}_{T^*M})}}\muhom(\cor_\Delta,F\letens\RD F)\\
&\to& \opb{{(\delta^{a}_{T^*M})}}\muhom(\cor_\Delta,\omDA[]).
\eneqn
We deduce the map
\eq\label{eq:defeuler}
&&H^0_{\SSi(F)}(T^*M;\muhom(F,F))\to \MHo[\SSi(F)]{M}.
\eneq

\begin{definition}\label{def:lagcyc}
Let $F\in\Derb_\cc(\cor_M)$.
The image of $\id_F$ by the map \eqref{eq:defeuler} is called the microlocal Euler class of $F$ and
is denoted  by $\mueu_M(F)$.
\end{definition}
Clearly, one has
\eq\label{eq:euF=euHKF}
&&\mueu_M(F)=\mueu_M(\HK(F)).
\eneq
Assume $M$ is real analytic and denote by $\Derb_\Rc(\cor_M)$ the  full triangulated subcategory of
$\Derb(\cor_M)$ consisting  of $\R$-constructible complexes.
Of course, $\R$-constructible  complexes  are
cohomologically constructible.
In~\cite[\S~9.4]{KS90} the microlocal Euler class of an object $F\in\Derb_\Rc(\cor_M)$ is constructed
as above and this class
is also called the characteristic cycle, or else, the Lagrangian cycle, of $F$.

\begin{remark}
Let $(K,u,v)$ be a trace kernel on $M$.
Let $\delta\cl M\to M\times M$ be the diagonal embedding.
Then $u$ and $v$ decompose as
\eqn
&&\dA\to \oim{\delta}\epb{\delta}K\to K\to \oim{\delta}\opb{\delta}K\to \omDA.
\eneqn
Hence $\delta_*\epb{\delta}K$ and $\oim{\delta}\opb{\delta}K$
are also trace kernels. We have evidently
\eqn
&&\mueu_M\bl\oim{\delta}\epb{\delta}K\br=\mueu_M\bl \oim{\delta}\opb{\delta}K\br=\mueu_M(K)
\quad\text{ as elements in $\MHo[T^*M]{M}$.}
\eneqn
\end{remark}

\subsubsection*{Trace kernels over one point}
Let us consider the particular case where $M$ is a single point, $M=\rmpt$, and let us identify a sheaf
over $\rmpt$ with a $\cor$-module.
In this situation, a trace kernel $(K,u,v)$  is the data of $K\in\Derb(\cor)$ together with linear maps
\eqn
&&\cor\to[u]K\to[v]\cor.
\eneqn
The (microlocal) Euler class $\eu_\rmpt(K)$ of this kernel is the image of $1\in\cor$ by $v\circ u$.

Assume now that $\cor$ is a field and denote by $\Derb_f(\cor)$
the full triangulated subcategory of $\Derb(\cor)$ consisting of objects
with finite-dimensional cohomologies.
Let $V\in\Derb_f(\cor)$ and set $V^*=\RHom(V,\cor)$. Let
$K=\HK(V)=V\tens V^*$, and let $v$ be the trace morphism and $u$ its dual.  Then
\eq
&&\parbox{60ex}{
\banum
\item
$\eu_\rmpt(V\tens V^*)=\tr(\id_V)$, the trace of the identity of $V$.
\item If $\cor$ has characteristic zero, then\\[1ex]
\hs{4ex}$\eu_\rmpt(V\tens V^*)=\chi(V)$, the Euler-Poincar\'e index of $V$.
\ee}\label{eq:euler index}
\eneq

\subsubsection*{Trace kernels for $\shd$-modules}
In this subsection, we denote by $X$ a complex manifold of complex dimension $d_X$ and the
base ring $\cor$ is the field $\C$. We denote by $\sho_X$ the structure sheaf and by
 $\Omega_X$ the sheaf of holomorphic forms of maximal degree.
We still denote by $\omega_X$ the topological dualizing complex
 and recall the isomorphism $\omega_X\simeq\C_X\,[2d_X]$.

One denotes by $\shd_X$ the sheaf of $\C_X$-algebras of (finite order) holomorphic differential operators on $X$
 and  refers  to~\cite{Ka03} for a detailed exposition of the theory of $\shd$-modules. We denote by
$\md[\shd_X]$ the category of left $\shd_X$-modules and by $\Derb(\shd_X)$ its bounded derived category.
We also denote by $\mdcoh[\shd_X]$ the abelian category of coherent $\shd_X$-modules and by $\Derb_\coh(\shd_X)$
the full triangulated subcategory of $\Derb(\shd_X)$ consisting of objects with coherent  cohomologies.

We denote by $\RDD\cl \Derb(\shd_X)^\rop\to\Derb(\shd_X)$
the duality functor for left $\shd$-modules:
\eqn
&&\RDD\shm\eqdot\rhom[\shd_X](\shm,\shd_X)\tens[\sho_X]\Omega_X^{\tens-1}\,[d_X].
\eneqn
We denote by $\scbul\detens\scbul$ the external product for $\shd$-modules:
\eqn
&&\shm\detens\shn\eqdot \shd_{X\times X}\tens[\shd_X\letens\shd_X](\shm\letens\shn).
\eneqn
Let $\Delta$ be the diagonal of $X\times X$.
The left $\shd_{X\times X}$-module $H^{d_X}_{[\Delta]}(\sho_{X\times X})$ (the algebraic cohomology
with support in $\Delta$) is denoted as usual by $\shbD$. Note that
\eqn
&&\RDD \shbD\simeq \shbD.
\eneqn
One shall be aware that here, the dual is taken over $X\times X$.
We also introduce
\eqn
&&\shbDU\eqdot\shbD\,[2d_X].
\eneqn
 For $\shm\in \Derb_\coh(\shd_X)$, we have the isomorphism
\eqn
\rhom[\shd_X](\shm,\shm)  &\simeq&\rhom[\shd_{X\times X}](\shbD,\shm\detens\RDD\shm)\,[d_X].
\eneqn
We deduce the morphism  in $\Derb(\shd_{X\times X})$
\eq\label{eq:DHHmor01}
&&\shbD\to\shm\detens\RDD\shm\,[d_X]
\eneq
and by duality, the morphism in $\Derb(\shd_{X\times X})$
\eq\label{eq:DHHmor02}
&&\shm\detens\RDD\shm\,[d_X]\to\shbDU.
\eneq
Denote by $\she_{X}$ the sheaf on $T^*X$ of microdifferential operators of ~\cite{SKK73}.
For a coherent $\shd_X$-module $\shm$ set
\eqn
&&\shm^E\eqdot \she_{X}\tens[\opb{\pi}\shd_X]\opb{\pi}\shm
\eneqn
and recall that, denoting by  $\chv(\shm)$ the characteristic variety of $\shm$, we have
 $\chv(\shm)=\Supp(\shm^E)$. One also sets
\eqn
&&\shc_\Delta\eqdot\shb_\Delta^E,\quad\shcDU\seteq\bl\shbDU\br^E.
\eneqn
We denote by $\RDE\cl\Derb(\she_X)^\rop\to\Derb(\she_X)$ the duality functor for left $\she$-modules:
\eqn
&&\RDE\shm\eqdot\rhom[\she_X](\shm,\she_X)\tens[\opb{\pi}\sho_X]\opb{\pi}\Omega_X^{\tens-1}\,[d_X]
\eneqn
and we denote by $\scbul\detens\scbul$ the external product for $\she$-modules:
\eqn
&&\shm\detens\shn\eqdot \she_{X\times X}\tens[\she_X\letens\she_X](\shm\letens\shn).
\eneqn
The morphisms ~\eqref{eq:DHHmor01} and~\eqref{eq:DHHmor02} give rise to the morphisms
\eq  \label{eq:EHHmor0}
&&\shcD\to \shm^E\detens\RDE\shm^E\,[d_X]\to\shcDU.
\eneq
Let $\Lambda$ be a closed conic subset of $T^*X$.
One sets
\eqn
&&\HHE[]{X}=\opb{{(\delta^{a}_{T^*X})}}\rhom[\she_{X\times X}](\shcD,\shcDU),\\
&&\RHHE[\Lambda]{X}=\rsect_\Lambda(T^*X;\HHE[]{X}),\\
&&\RHHEk[\Lambda]{X}=H^k(\RHHE[\Lambda]{X})=H^k_\Lambda(T^*X;\HHE[]{X}).
\eneqn
We call $\HHE[\Lambda]{X}$ the {\em Hochschild homology}
 of $\she_X$ with support in $\Lambda$.

The morphisms in~\eqref{eq:EHHmor0} define a class
\eq\label{eq:hhM}
&&\hh_\she(\shm)\in\RHHEo[\chv(\shm)]{X}
\eneq
that we call the {\em Hochschild class} of $\shm$.

Let $S$ be a closed subset of $X$.
By restricting to the zero-section $X$ of $T^*X$ the above construction, we obtain the  Hochschild homology
of $\shd_X$:
\eqn
\HHD[]{X}&=&\opb{{(\delta_{X})}}\rhom[\shd_{X\times X}](\shbD,\shbDU)
\simeq\HHE[]{X}\vert_X,\\
\RHHD[S]{X}&=&\rsect_S(X;\HHD[]{X}),\\
\RHHDk[S]{X}&=&H^k(\RHHD[S]{X})=H^k_S(X;\HHD[]{X}).
\eneqn
Then, to $\shm\in\Derb_\coh(\shd_X)$ one  obtains 
\eqn
&& \hh_\shd(\shm)\eqdot\hh_\she(\shm)\vert_X\in\RHHDo[\Supp(\shm)]{X}.
\eneqn

We shall make a link between the Hochschild class of $\shm$ and the microlocal Euler class of a
trace kernel attached to the sheaves of holomorphic solutions of $\shm$. We need a lemma.
\begin{lemma}\label{le:homEmuhom}
For $\shn_1$ and $\shn_2$ in $\Derb_\coh(\shd_X)$, there exists a natural morphism
\eq\label{eq:homEmuhom}
&&\rhom[\she](\shn_1^E,\shn_2^E)\to\muhom(\Omega_X\ltens[\shd_X]\shn_1,\Omega_X\ltens[\shd_X]\shn_2).
\eneq
Moreover, this morphism is compatible with the composition
\eqn
&&\rhom[\she](\shn^E_1,\shn^E_2)\tens\rhom[\she](\shn_2^E,\shn_3^E)\to\rhom[\she](\shn_1^E,\shn_3^E),\\
&&\muhom(F_1,F_2)\tens\muhom(F_2,F_3)\to\muhom(F_1,F_3).
\eneqn
\end{lemma}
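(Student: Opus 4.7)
The plan is to construct the morphism by reduction to a ``prototype'' case and propagation via free resolutions. The core case is when $\shn_1 = \shn_2 = \shd_X$: then $\shn_i^E = \she_X$, so the left-hand side is $\rhom[\she](\she_X,\she_X) \simeq \she_X$, and $\Omega_X \ltens[\shd_X] \shd_X \simeq \Omega_X$. Thus the assertion reduces in this case to exhibiting a natural morphism of sheaves of rings
\[
\she_X \longrightarrow \muhom(\Omega_X,\Omega_X)
\]
on $T^*X$. This is obtained from the classical microlocal action of microdifferential operators on holomorphic functions established in~\cite{SKK73}: since $\Omega_X$ is an invertible $\sho_X$-module, there is a canonical isomorphism $\muhom(\Omega_X,\Omega_X) \simeq \muhom(\sho_X,\sho_X)$, and the SKK construction furnishes the required action $\she_X \to \muhom(\sho_X,\sho_X)$.

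For general coherent $\shn_1,\shn_2$, I would work locally on $X$ and represent each $\shn_i$ by a bounded complex $\shl_i^\bullet$ of free $\shd_X$-modules of finite rank, say $\shl_i^k = \shd_X^{\oplus p_i^k}$. Then $\shn_i^E$ is represented by $\she_X^{\oplus p_i^\bullet}$, and $\Omega_X \ltens[\shd_X] \shn_i$ is represented by $\Omega_X^{\oplus p_i^\bullet}$ with the induced differentials. Both sides of the lemma can then be computed as total complexes of double complexes whose entries are, respectively, $\rhom[\she]\bl\she_X^{\oplus p_1^k},\she_X^{\oplus p_2^\ell}\br$ and $\muhom\bl\Omega_X^{\oplus p_1^k},\Omega_X^{\oplus p_2^\ell}\br$. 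Applying the prototype morphism entry by entry, and using the external-product morphism of Proposition~\ref{prop:microcompkern} to handle ranks, yields a morphism between these double complexes, hence on total complexes. Independence of the chosen resolutions follows from the standard fact that any two finite free resolutions are chain-homotopy-equivalent and from the naturality of the prototype morphism, so the construction globalizes to a well-defined morphism of sheaves on $T^*X$.

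Compatibility with composition is inherited from the prototype case: the morphism $\she_X \to \muhom(\sho_X,\sho_X)$ is compatible with the ring structures on both sides (composition of microdifferential operators on one side, composition of $\muhom$ on the other), and for general modules the compatibility then follows by the matrix calculus on the free presentations, exactly as in classical Yoneda-style arguments. The main obstacle is the first step, namely the construction and multiplicativity of the SKK action morphism $\she_X \to \muhom(\sho_X,\sho_X)$; this is the heart of the Sato--Kashiwara--Kawai microlocal theory and I would invoke it rather than reprove it. Bookkeeping with shifts and with the conversion between the left $\shd$-module $\sho_X$ and the right $\shd$-module $\Omega_X$ (i.e.\ tensoring with $\omega_X^{\rm hol}$) is routine but must be carried out carefully.
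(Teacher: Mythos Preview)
Your proposal is correct and rests on the same key input as the paper: the morphism $\she_X\to\muhom(\Omega_X,\Omega_X)$ (the paper cites \cite[Prop.~10.6.2]{KS85} for this, which is the refined form of the SKK action you invoke). The difference is in how the passage from the prototype to general $\shn_1,\shn_2$ is handled. The paper observes that $\she_X\to\muhom(\Omega_X,\Omega_X)$ is a morphism in $\Derb(\opb{\pi}\shd_X\tens\opb{\pi}\shd_X^{\rop})$, i.e.\ a morphism of $(\shd,\shd)$-bimodules; then applying $\rhom[\opb{\pi}\shd_X](\opb{\pi}\shn_1,\,\scbul\,\ltens[\opb{\pi}\shd_X]\opb{\pi}\shn_2)$ to it immediately gives the map, using the adjunction $\rhom[\she](\shn_1^E,\shn_2^E)\simeq\rhom[\opb{\pi}\shd_X](\opb{\pi}\shn_1,\shn_2^E)$ on the left and the $\shd$-bilinearity of $\muhom(\Omega_X,\Omega_X)$ on the right. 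This one-line functorial construction makes naturality, compatibility with composition, and globality automatic, whereas your route through local free resolutions and gluing is the explicit unwinding of the same derived-functor argument. Your approach is fine but carries extra bookkeeping (well-definedness up to homotopy, sheafification) that the bimodule formulation absorbs for free.
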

\begin{proof}
We have  the natural morphism
in $\Derb(\opb{\pi}\shd_X\tens\opb{\pi}\shd_X^\rop)$
 (see~\cite[Prop.~10.6.2]{KS85})
 \eqn
 &&\she_X\to\muhom(\Omega_X,\Omega_X).
 \eneqn
 This gives rise to the morphisms
 \eqn
 \rhom[\opb{\pi}\shd_X](\opb{\pi}\shn_1,\she_X\tens[\opb{\pi}\shd_X]\opb{\pi}\shn_2)&&\\
 &&\hspace{-30ex}
 \to \rhom[\opb{\pi}\shd_X](\opb{\pi}\shn_1,\muhom(\Omega_X,\Omega_X))\tens[\opb{\pi}\shd_X]\opb{\pi}\shn_2\\
 &&\hspace{-30ex}
 \simeq \muhom(\Omega_X\ltens[\shd_X]\shn_1,\Omega_X\ltens[\shd_X]\shn_2).
 \eneqn
 \end{proof}
We have
\eqn
\Omega_{X\times X}\,[-d_X]\ltens[\shd_{X\times X}]\shbD&\simeq&\C_\Delta,\\
\Omega_{X\times X}\,[-d_X]\ltens[\shd_{X\times X}] \shbDU&\simeq&\omDA[].
\eneqn
Applying Lemma~\ref{le:homEmuhom}, one deduces the morphisms
\eqn
\rhom[\she_{X\times X}](\shcD,\shcDU)
&\to&
\muhom(\Omega_{X\times X}\ltens[\shd_{X\times X}]\shbD,\Omega_{X\times X}
\ltens[\shd_{X\times X}]\shbDU)\\
 &\simeq&\muhom(\C_\Delta,\omDA[]).
\eneqn
An easy calculation shows that the first arrow  is also an isomorphism. Therefore, we get the isomorphism
\eq\label{eq:HHEHH}
&&\HHE[]{X}\isoto\MHC[]{X}.
\eneq
Recall that the Hochschild homology of $\she_X$ has been already calculated in~\cite{BG87}.

Applying the functor $\Omega_{X\times X}\,[-d_X]\ltens[\shd_{X\times X}]\scbul$ to~\eqref{eq:DHHmor01}
and~\eqref{eq:DHHmor02} we get the morphisms
\eq\label{eq:HHEU3}
&&\C_\Delta\to\Omega_{X\times X}\lltens[\shd_{X\times X}](\shm\detens\RDD\shm)\to\omDA[].
\eneq
\begin{notation}\label{not:HKM}
For $\shm\in\Derb_\coh(\shd_X)$, we denote by $\HK(\shm)$ the trace kernel given by~\eqref{eq:HHEU3}.
\end{notation}
Since $\chv(\shm)=\SSi(\rhom[\shd_X](\shm,\sho_X))$ by~\cite[Th.~11.3.3]{KS90}, we get that
$\mueu_M(\HK(\shm))$ is supported by $\chv(\shm)$, the characteristic variety of $\shm$.

\begin{proposition}
After identifying $\HHE[]{X}$ and $\MHC[]{X}$ by the isomorphism~\eqref{eq:HHEHH},
we have $\hh_\she(\shm)=\mueu_X(\HK(\shm))$ in $\HHCo[\chv(\shm)]{X}$.
\end{proposition}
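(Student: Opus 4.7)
The plan is to show that both classes arise as images of the \emph{same} composition in $\Derb(\shd_{X\times X})$, transported to two different targets via functors that are linked by Lemma~\ref{le:homEmuhom}. More precisely, the morphisms~\eqref{eq:DHHmor01} and~\eqref{eq:DHHmor02}
\eqn
&&\shbD\to \shm\detens\RDD\shm\,[d_X]\to \shbDU
\eneqn
are the common starting point. Applying $(-)^E=\she_X\tens[\opb{\pi}\shd_X]\opb{\pi}(-)$ to this sequence and interpreting the composition in $\rhom[\she_{X\times X}](\shcD,\shcDU)$ gives, by construction, $\hh_\she(\shm)$. Applying $\Omega_{X\times X}[-d_X]\lltens[\shd_{X\times X}](-)$ gives exactly the trace kernel $\HK(\shm)$ together with its structure morphisms $u\cl\C_\Delta\to K$, $v\cl K\to\omDA$, and hence gives $\mueu_X(\HK(\shm))$ as the composition $v\circ u$ in $H^0_{\chv(\shm)}(T^*X;\muhom(\C_\Delta,\omDA))$.

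The link between the two targets is Lemma~\ref{le:homEmuhom}: for $\shn_1,\shn_2\in\Derb_\coh(\shd_X)$ it produces a natural morphism
\eqn
&&\rhom[\she](\shn_1^E,\shn_2^E)\to\muhom(\Omega_X\ltens[\shd_X]\shn_1,\Omega_X\ltens[\shd_X]\shn_2)
\eneqn
\emph{compatible with composition}. Specializing to $\shn_1=\shbD$, $\shn_2=\shm\detens\RDD\shm\,[d_X]$, $\shn_3=\shbDU$ on $X\times X$ and using the identifications $\Omega_{X\times X}[-d_X]\ltens[\shd_{X\times X}]\shbD\simeq\C_\Delta$ and $\Omega_{X\times X}[-d_X]\ltens[\shd_{X\times X}]\shbDU\simeq\omDA$, the compatibility with composition gives a commutative diagram whose top row is the composition defining $\hh_\she(\shm)$ and whose bottom row is the composition defining $\mueu_X(\HK(\shm))$. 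Since the isomorphism~\eqref{eq:HHEHH} is itself induced by this same Lemma~\ref{le:homEmuhom} applied with $(\shn_1,\shn_2)=(\shbD,\shbDU)$, the conclusion follows.

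Concretely, the verification runs as follows. First, express $\hh_\she(\shm)$ as the image under the Yoneda-type composition $\rhom[\she](\shcD,\shm^E\detens\RDE\shm^E[d_X])\tens\rhom[\she](\shm^E\detens\RDE\shm^E[d_X],\shcDU)\to\rhom[\she](\shcD,\shcDU)$ of the pair ($\alpha^E,\beta^E$) arising from~\eqref{eq:DHHmor01},~\eqref{eq:DHHmor02}. Next, observe that by functoriality of $\Omega_{X\times X}[-d_X]\lltens[\shd_{X\times X}](-)$, the morphisms $u$ and $v$ of the trace kernel $\HK(\shm)$ are the images of $\alpha$ and $\beta$, so $\mueu_X(\HK(\shm))$ is the Yoneda composition of the images. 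Finally, invoke the compatibility of Lemma~\ref{le:homEmuhom} with composition to intertwine the two Yoneda products, and conclude equality after identifying source and target via~\eqref{eq:HHEHH}.

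The main technical obstacle is to keep bookkeeping of all the shifts, dualities, and the passage between left $\shd$-modules and $\omega_X$--twisted right-module conventions hidden in $\detens$, $\RDD$ and in $\Omega_{X\times X}[-d_X]\lltens[\shd_{X\times X}](-)$, so as to see that the two identifications $\Omega_{X\times X}[-d_X]\ltens\shbD\simeq\C_\Delta$ and $\Omega_{X\times X}[-d_X]\ltens\shbDU\simeq\omDA$ used above are the same identifications that appear in the construction of~\eqref{eq:HHEHH}. Once this compatibility of identifications is checked on the nose (it is a local computation on $\C^{d_X}$, much like the end of the proof of Theorem~\ref{th:muconv}), the commutativity of the Yoneda-composition square is automatic and the claim follows.
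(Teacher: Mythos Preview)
Your proposal is correct and follows essentially the same approach as the paper: both reduce the equality to Lemma~\ref{le:homEmuhom} applied to the sequence~\eqref{eq:EHHmor0}, using the compatibility of that lemma with composition. The paper's proof is a single sentence (``This follows from Lemma~\ref{le:homEmuhom} applied to~\eqref{eq:EHHmor0}''), whereas you spell out the Yoneda-composition picture and flag the bookkeeping of shifts and identifications; your extra caution about a local verification is not strictly needed here, since the identifications $\Omega_{X\times X}[-d_X]\ltens\shbD\simeq\C_\Delta$ and $\Omega_{X\times X}[-d_X]\ltens\shbDU\simeq\omDA$ are the \emph{same} ones used to define~\eqref{eq:HHEHH}, so consistency is tautological rather than something to be checked locally.
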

\begin{proof}
This follows from Lemma~\ref{le:homEmuhom} applied to~\eqref{eq:EHHmor0}.
\end{proof}
Note that the class  $\mueu_X(\HK(\shm))$ coincides
with the microlocal Euler class of $\shm$ already introduced by Schapira-Schneiders in~\cite{ScSn94}.

\section{Operations on microlocal Euler classes I}
In this section, we shall adapt to  trace kernels the constructions of \cite[Chap.~4~\S3]{KS12} and we shall
show that under natural microlocal conditions of properness, the microlocal Euler class of the
composition of two kernels is the  composition of the classes.

We use Notations~\ref{not:12345} and we consider  a trace kernel $(K,u,v)$ on $M_{12}$.
\begin{lemma}\label{le:HH3}
Let $K$ be a trace kernel on $M_{12}$.
There are natural morphisms in $\Derb(\cor_{M_{11}})$:
\eq
&&\dA[13]\to K\sconv[22](\omDAI[2]\letens\dA[3]),\label{eq:HHmor1}\\
&& K\conv[22](\dA[2]\letens\omDA[3])\to\omDA[13].\label{eq:HHmor2}
\eneq
\end{lemma}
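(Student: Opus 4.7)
The plan is to reduce both morphisms to Lemma~\ref{le:compLagcyc} by using the functoriality of the composition operations $\conv$ and $\sconv$ in their first argument, combined with the structural morphisms $u$ and $v$ of the trace kernel $K$.

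For~\eqref{eq:HHmor1}, I would start from Lemma~\ref{le:compLagcyc}(ii), which furnishes a natural morphism
\[
\dA[13]\to \dA[12]\sconv[22](\omDAI[2]\letens\dA[3]).
\]
The operation $\sconv[22]$ is covariant in its first argument (being built out of $\roim{q_{1133}}$ and $\epb{\delta_{22}}$), so the morphism $u\cl\dA[12]\to K$ induces
\[
\dA[12]\sconv[22](\omDAI[2]\letens\dA[3]) \to K\sconv[22](\omDAI[2]\letens\dA[3]),
\]
and composing the two arrows yields~\eqref{eq:HHmor1}.

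For~\eqref{eq:HHmor2}, the construction is dual. The morphism $v\cl K\to\omDA[12]$ induces, by functoriality of $\conv[22]$ in its first argument (built out of $\reim{q_{1133}}$ and $\opb{\delta_{22}}$), a natural morphism
\[
K\conv[22](\dA[2]\letens\omDA[3]) \to \omDA[12]\conv[22](\dA[2]\letens\omDA[3]).
\]
Composing with the arrow supplied by Lemma~\ref{le:compLagcyc}(i) produces the desired morphism to $\omDA[13]$.

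The argument is essentially mechanical; the only point requiring attention is the bookkeeping of which $M_2$-factors are convolved together, as specified by Convention~\ref{conv:conv22}. Concretely, in $K\sconv[22](\omDAI[2]\letens\dA[3])$ one convolves the second factor of $K\in\Derb(\cor_{M_{12}})$ against the first factor of $\omDAI[2]\letens\dA[3]\in\Derb(\cor_{M_{223}})$, and similarly in the $\conv[22]$ version; once this is fixed, the functoriality applies verbatim and no genuine obstacle arises.
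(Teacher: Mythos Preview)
Your proposal is correct and follows exactly the paper's own argument: reduce each morphism to the corresponding part of Lemma~\ref{le:compLagcyc} and then use functoriality of $\sconv[22]$ (resp.\ $\conv[22]$) in the first variable together with the structural morphism $u\cl\dA[12]\to K$ (resp.\ $v\cl K\to\omDA[12]$). The extra remark on Convention~\ref{conv:conv22} is accurate and harmless.
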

\begin{proof}
(i) By  Lemma~\ref{le:compLagcyc}~(ii) we have a morphism
$\dA[13]\to \dA[12]\sconv[22](\omDAI[2]\letens\dA[3])$. By composing this morphism with $\dA[12]\to K$,
we get~\eqref{eq:HHmor1}.

\vspace{2ex}\noindent
(ii)  By  Lemma~\ref{le:compLagcyc}~(i) we have a morphism $\omDA[12]\conv[22](\dA[2]\letens\omDA[3])\to\omDA[13]$.
 By composing this morphism with $K\to\omDA[12]$ we we get~\eqref{eq:HHmor2}.
\end{proof}

Let $K$ be  a trace kernel on $M_{12}$ with microsupport $\SSi(K)$ contained in a closed conic subset
$\Lambda_{1122}$ of $T^*M_{1122}$ and let $\Lambda_{23}$ a closed conic subset of $T^*M_{23}$.
We assume
\eq\label{hyp:lambdapproper}
&&
\Lambda_{1122}\atimes[22]\delta_{T^*M_{23}}^a\Lambda_{23} \mbox{ is proper over }T^*M_{1133}.
\eneq
We set
\eq\label{eq:lambda1}
&&\left\{
\parbox{60ex}{
$\Lambda_{12}\eqdot\Lambda_{1122}\cap T^*_{\Delta_{12}}M_{1122}$,\\
$\Lambda_{1133}\eqdot\Lambda_{1122}\aconv[22]\delta_{T^*M_{23}}^a\Lambda_{23}$,\\
$\Lambda_{13}\eqdot\Lambda_{1133}\cap T^*_{\Delta_{13}}M_{1133}= \Lambda_{12}\aconv[2]\Lambda_{23}$.
}\right.
\eneq

We define a map
\eq\label{eq:defPhiK}
&&\Phi_K
\cl\RMH[\Lambda_{23}]{23}\To\RMH[\Lambda_{13}]{13}
\eneq
by the sequence of morphisms
\eqn
&&\RMH[\Lambda_{23}]{23}
\simeq \rsect_{\delta_{T^*M_{23}}^a\Lambda_{23}}(T^*M_{2233}\,;\,\muhom(\dA[23],\omDA[23]))\\
&&\hspace{3ex}\simeq \rsect_{\delta_{T^*M_{23}}^a\Lambda_{23}}\bl T^*M_{2233}\,;\,
\muhom(\omDAI[2]\letens\dA[3],\dA[2]\letens\omDA[3])\br\\
&&\hspace{3ex}\to \rsect_{\Lambda_{1133}}\bl T^*M_{1133}\,;\,
\muhom(K,K)\aconv[22]\muhom(\omDAI[2]\letens\dA[3],\dA[2]\letens\omDA[3])\br\\
&&\hspace{3ex}\to\rsect_{\Lambda_{1133}}\bl T^*M_{1133}\,;\,
\muhom(K\sconv[22](\omDAI[2]\letens\dA[3]),K\conv[22](\dA[2]\letens\omDA[3]) )  \br\\
&&\hspace{3ex}\to\Gamma\bl T^*M_{1133}\,;\,\muhom(\dA[13],\omDA[13])\br
\simeq\RMH[{\Lambda_{13}}]{13}.
\eneqn
 Here the first arrow is given by $\id_{K}$, the second is given by
 Proposition~\ref{prop:microcompkern},
and\ the last arrow is  induced by the morphisms in Lemma~\ref{le:HH3}.

The next result is similar to \cite[Th.~4.3.5]{KS12}.
\begin{proposition}\label{pro:HH4}
Let  $\Lambda_{1122}\subset T^*M_{1122}$ and $\Lambda_{23}\subset T^*M_{23}$ be closed conic subsets
satisfying~\eqref{hyp:lambdapproper} and recall the notation~\eqref{eq:lambda1}.
Let $K$ be a trace kernel on $M_{12}$ with microsupport contained in $\Lambda_{1122}$.
Then the map $\Phi_K$  in \eqref{eq:defPhiK} is the map $\mueu_{M_{12}}(K)\aconv[12]\;$ given by Corollary~\ref{co:compLC}.
\end{proposition}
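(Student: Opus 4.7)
The plan is to realize both maps $\Phi_K$ and $\mueu_{M_{12}}(K)\aconv[12]\scbul$ as images of the element $\id_K\in H^0(T^*M_{12};\muhom(K,K))$ transported through a common diagram by two different routes, and to reduce the equality to the naturality of the composition morphism of Proposition~\ref{prop:microcompkern}.

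First, I would unfold the definition of $\mueu_{M_{12}}(K)\aconv[12]\lambda$ for $\lambda \in \RMH[\Lambda_{23}]{23}$. By Definition~\ref{def:dpmueu}, $\mueu_{M_{12}}(K)$ is the image of $\id_K$ under the composition
\[
\muhom(K,K)\to\muhom(\dA[12],K)\to\muhom(\dA[12],\omDA[12])
\]
induced by $u$ and $v$. Then Corollary~\ref{co:compLC} and Proposition~\ref{prop:complagcyc} say that $\mueu(K)\aconv[12]\lambda$ is obtained by forming $\mueu(K)\aconv[22]\lambda$ inside $\muhom(\dA[12],\omDA[12])\aconv[22]\muhom(\dA[23],\omDA[23])$ (after using the twist isomorphism~\eqref{eq:mhomtwist}), pushing forward via Proposition~\ref{prop:microcompkern} to $\muhom(\dA[12]\sconv[22](\omDAI[2]\letens\dA[3]),\,\omDA[12]\conv[22](\dA[2]\letens\omDA[3]))$, and finally applying Lemma~\ref{le:compLagcyc} to land in $\muhom(\dA[13],\omDA[13])$.

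On the other hand, $\Phi_K(\lambda)$ is built from $\id_K\aconv[22]\lambda$ in $\muhom(K,K)\aconv[22]\muhom(\omDAI[2]\letens\dA[3],\dA[2]\letens\omDA[3])$, composed via Proposition~\ref{prop:microcompkern} to yield an element of $\muhom(K\sconv[22](\omDAI[2]\letens\dA[3]),\,K\conv[22](\dA[2]\letens\omDA[3]))$, and then hit by the morphisms of Lemma~\ref{le:HH3}. These are precisely built by pre-composing with $u\cl\dA[12]\to K$ (together with the morphism of Lemma~\ref{le:compLagcyc}~(ii)) and post-composing with $v\cl K\to\omDA[12]$ (together with Lemma~\ref{le:compLagcyc}~(i)). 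Thus the content of the statement is the commutativity of the square asserting that the extraction of $\mueu(K)$ from $\id_K$ (applying $u$ and $v$) commutes with the composition morphism $\aconv[22]$ of Proposition~\ref{prop:microcompkern}. This is naturality of that composition in each argument, a statement already used implicitly in several places in~\cite{KS12}.

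The main obstacle will be the bookkeeping: one must carefully verify that the morphisms of Lemma~\ref{le:HH3}, which are built as the composition of the morphisms of Lemma~\ref{le:compLagcyc} with $u$ and $v$ applied on the $K$-factor, indeed assemble, after the composition $\aconv[22]$, into the same morphism as the one obtained by extracting $\mueu(K)\in\muhom(\dA[12],\omDA[12])$ first and then invoking the composition morphism of Proposition~\ref{prop:complagcyc}. Once laid out in a single diagram in $\Derb(\cor_{T^*M_{1133}})$, the commutativity reduces to functoriality of $\muhom$ in each argument together with the bifunctoriality of $\aconv[22]$; the argument then proceeds along the lines of \cite[Th.~4.3.5]{KS12}, replacing Hochschild homology of $\DQ$-algebroids by microlocal homology and coherent modules by trace kernels.
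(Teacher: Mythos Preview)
Your proposal is correct and follows essentially the same approach as the paper. The paper makes the naturality argument explicit by writing down two commutative diagrams---one square obtained from $u\cl\dA[12]\to K$ and one triangle obtained from $v\cl K\to\omDA[12]$---and then composing both with the morphism $w$ coming from Lemma~\ref{le:compLagcyc}; this is exactly the ``extract $\mueu(K)$ first vs.\ compose first'' dichotomy you describe, reduced to the bifunctoriality of the composition of Proposition~\ref{prop:microcompkern}.
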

\begin{proof}

By using the morphism $\dA[12]\to K$, we find the commutative diagram below:

\eqn
&&\hspace*{-8ex}\kern -20ex
\scalebox{.78}{\parbox{\my}{
$\xymatrix@C=2ex{
\rsect_{\Lambda_{23}}
\bl T^*M_{2233};\muhom(\dA[23],\omDA[23])\br \ar[r]\ar[d]&
\rsect_{\Lambda_{13}}
\bl T^*M_{1133};\muhom(\dA[12]\sconv[22]\dA[23],\dA[12]\conv[22]\omDA[23])\br\ar[d]\\
\rsect_{\Lambda_{1133}}\bl T^*M_{1133};
\muhom(K\sconv[22]\dA[23],K\conv[22]\omDA[23])\br\ar[r]&
\rsect_{\Lambda_{13}}
\bl T^*M_{1133};\muhom(\dA[12]\sconv[22]\dA[23], K\conv[22]\omDA[23])\br.
}$}}\eneqn
By using the morphism $K\to\omDA[12]$, we get the commutative diagram
\eq
&&\hspace{-10ex}\kern-10ex
\scalebox{.75}{\parbox{\my}{$
\xymatrix@C=-20ex@R=5ex{
\rsect_{\Lambda_{23}}\bl T^*M_{2233};\muhom(\dA[23],\omDA[23])\br \ar[rr]\ar[rd]&&
 \db{\rsect_{\Lambda_{13}}\bl T^*M_{1133};
\muhom(\dA[12]\sconv[22]\dA[23],\omDA[12]\conv[22]\omDA[23])\br}.\\
&\rsect_{\Lambda_{1133}}\bl T^*M_{1133};
\muhom( K\sconv[22]\dA[23], K\conv[22]\omDA[23])\br\ar[ru]&
}
$}
}
\label{diag:hhconv}
\eneq

\noindent
Recall the morphisms in Lemma~\ref{le:compLagcyc}:
\eq\ba{rcl}\label{eq:CtoFGtoC}
&&\omDA[12]\conv[22](\dA[2]\letens\omDA[3])\to\omDA[13],\quad \dA[13]\to\dA[12]\sconv[22](\omDAI[2]\letens\dA[3]).
\ea\eneq
We get the morphisms
\eqn
&&\hspace{-4.3ex}w\cl\rsect_{\delta_{T^*M_{13}}^a\Lambda_{13}}\bl T^*M_{1133};
\muhom(\dA[12]\sconv[22]\dA[23],\omDA[12]\conv[22]\omDA[23])\\
&&\hspace{-1ex}\simeq\rsect_{\delta_{T^*M_{13}}^a\Lambda_{13}}\bl T^*M_{1133};
\muhom(\dA[12]\sconv[22](\omDAI[2]\letens\dA[3]),\omDA[12]\conv[22](\dA[2]\letens\omDA[3]))\br\\
&&\hspace{27ex}\to\rsect_{\delta_{T^*M_{13}}^a\Lambda_{13}}\bl T^*M_{1133};\muhom(\dA[13],\omDA[13])\br.
\eneqn
By its construction, the morphism $\mueu_{M_{12}}(K)\conv\;$ is obtained as the
composition with the map $w$ of the top row of the diagram~\eqref{diag:hhconv}.
Since the composition with $w$ of the two other arrows is the morphism  $\Phi_K$, the proof is complete.
\end{proof}
The next result is similar to \cite[Th.~4.3.6]{KS12}.

Let $i=1,2$, $j= i+1$ and let $\Lambda_{iijj}$ be a closed conic subset of $T^*M_{iijj}$.
Assume that
\eq\label{hyp:112233proper}
&&\Lambda_{1122}\atimes[22]\Lambda_{2233} \mbox{ is proper over }T^*M_{1133}.
\eneq
Set $\Lambda_{1133}=\Lambda_{1122}\aconv[22]\Lambda_{2233}$ and $\Lambda_{ij}=\Lambda_{iijj}\cap T^*_{\Delta_{ij}}M_{iijj}$.

\begin{theorem}\label{th:HH1}
Let $K_{ij}$ be a trace kernel on $M_{ij}$ with $\SSi(K_{ij})\subset\Lambda_{iijj}$.
Assume~\eqref{hyp:112233proper}, set $\tK_{23}=\omDAI[2]\conv[2]K_{23}
\simeq(\omAI[2]\letens\cor_{233})\ltens K$  and set
$K_{13}=K_{12}\conv[22]\tK_{23}$. Then
\banum
\item
$K_{13}$ is a trace kernel on $M_{13}$,
\item
 $\mueu_{M_{13}}(K_{13})=\mueu_{M_{12}}(K_{12})\aconv[2] \mueu_{M_{23}}(K_{23})$
as elements of $\MHo[\Lambda_{13}]{13}$.
\item
In particular, we have $\Phi_{K_{12}}\circ\Phi_{K_{23}}\simeq\Phi_{K_{13}}$.
\eanum
\end{theorem}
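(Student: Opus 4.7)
The plan is to construct the trace kernel structure on $K_{13}$ directly from the data of $K_{12}$ and $K_{23}$, then deduce (b) from Proposition~\ref{pro:HH4} by recognizing that the class $\Phi_{K_{12}}(\mueu_{M_{23}}(K_{23}))$ is nothing but $\mueu_{M_{13}}(K_{13})$, and finally derive (c) as a formal consequence of (b) plus associativity of $\aconv[2]$.

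For part (a), I would first transport the trace structure of $K_{23}$ to $\tK_{23}$: applying $\omDAI[2]\conv[2]-$ to $u_{23}\cl\dA[23]\to K_{23}$ and $v_{23}\cl K_{23}\to\omDA[23]$ gives, via the identifications $\omDAI[2]\conv[2]\dA[23]\simeq\omDAI[2]\letens\dA[3]$ and $\omDAI[2]\conv[2]\omDA[23]\simeq\dA[2]\letens\omDA[3]$ (from \eqref{eq:mhomtwist} and direct computation), morphisms $\omDAI[2]\letens\dA[3]\to\tK_{23}\to\dA[2]\letens\omDA[3]$. Composing with Lemma~\ref{le:compLagcyc} applied to $K_{12}$ and the morphisms $\dA[12]\to K_{12}\to\omDA[12]$, we obtain
\[
\dA[13]\to K_{12}\sconv[22](\omDAI[2]\letens\dA[3])\to K_{12}\sconv[22]\tK_{23}
\]
and
\[
K_{12}\conv[22]\tK_{23}\to K_{12}\conv[22](\dA[2]\letens\omDA[3])\to\omDA[13].
\]
Under assumption \eqref{hyp:112233proper}, the natural morphism $K_{12}\conv[22]\tK_{23}\to K_{12}\sconv[22]\tK_{23}$ is an isomorphism since $\SSi(\tK_{23})\subset\Lambda_{2233}$, so both chains terminate at $K_{13}$, equipping it with morphisms $u_{13}\cl\dA[13]\to K_{13}$ and $v_{13}\cl K_{13}\to\omDA[13]$.

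For part (b), apply Proposition~\ref{pro:HH4} to $K_{12}$ with the closed conic set $\SSi(K_{23})$: we obtain $\Phi_{K_{12}}(\mueu_{M_{23}}(K_{23}))=\mueu_{M_{12}}(K_{12})\aconv[2]\mueu_{M_{23}}(K_{23})$ in $\MHo[\Lambda_{13}]{13}$. It remains to show that the left hand side equals $\mueu_{M_{13}}(K_{13})$. Unwinding the construction of $\Phi_{K_{12}}$ in \eqref{eq:defPhiK}, the class $\mueu_{M_{23}}(K_{23})$, viewed in $\muhom(\omDAI[2]\letens\dA[3],\dA[2]\letens\omDA[3])$ via \eqref{eq:mhomtwist}, is represented by the composition $\omDAI[2]\letens\dA[3]\to\tK_{23}\to\dA[2]\letens\omDA[3]$ constructed above. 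Applying $\id_{K_{12}}\aconv[22]-$ via Proposition~\ref{prop:microcompkern} and composing with the natural maps of Lemma~\ref{le:HH3}, one checks from the definition of $u_{13},v_{13}$ in (a) that the resulting class in $\muhom(\dA[13],\omDA[13])$ is precisely the one defined by the composition $\dA[13]\to K_{13}\to\omDA[13]$, i.e.\ $\mueu_{M_{13}}(K_{13})$.

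Part (c) is then immediate: for any $\lambda\in\RMH[\Lambda_{34}]{34}$ (in a suitable setup), applying Proposition~\ref{pro:HH4} twice gives $\Phi_{K_{12}}\circ\Phi_{K_{23}}(\lambda)=\mueu(K_{12})\aconv[2]\mueu(K_{23})\aconv[2]\lambda$, which by (b) and the associativity of $\aconv[2]$ (built into Proposition~\ref{prop:complagcyc} and formalized in Corollary~\ref{co:compLC}) equals $\mueu(K_{13})\aconv[2]\lambda=\Phi_{K_{13}}(\lambda)$.

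The main obstacle will be the diagram chase in part (b): several isomorphisms of the form $\omDAI[2]\conv[2](-)\simeq$ twists and shifts of $(-)$ must be tracked coherently, and one must verify that the morphism produced by feeding $\mueu_{M_{23}}(K_{23})$ into the definition \eqref{eq:defPhiK} of $\Phi_{K_{12}}$ factors, after applying the final maps from Lemma~\ref{le:HH3}, through precisely the composition $u_{13},v_{13}$ built in (a). This is formally a naturality statement, but requires carefully juggling the $\conv[22]$ versus $\sconv[22]$ distinction (and their coincidence under the properness hypothesis) in the microlocalized setting.
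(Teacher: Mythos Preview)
Your proposal is correct and follows essentially the same approach as the paper's proof. In particular, the paper also constructs the trace structure on $K_{13}$ via the twisted morphisms $\omDAI[2]\letens\dA[3]\to\tK_{23}\to\dA[2]\letens\omDA[3]$ combined with Lemma~\ref{le:HH3}, then proves (b) by exhibiting a single commutative diagram whose top row computes $\Phi_{K_{12}}(\mueu_{M_{23}}(K_{23}))$ and whose bottom path computes $\mueu_{M_{13}}(K_{13})$, invoking Proposition~\ref{pro:HH4} exactly as you propose; part (c) is dispatched in one line from (b) and Proposition~\ref{pro:HH4}.
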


\begin{proof}
(a)  The trace kernel $K_{23}$ defines morphisms
\eqn
&&\omDAI[2]\letens\dA[3]\to \tK_{23}\to\dA[2]\letens\omDA[3].
\eneqn
Assuming~\eqref{hyp:112233proper} and using~\eqref{eq:HHmor1} and~\eqref{eq:HHmor2}, we get that
$K_{13}=K_{12}\conv[22]\tK_{23}$ is a trace kernel on $M_{13}$.

\vspace{2ex}\noindent
(b) We get a commutative diagram in which we set
$\lambda_{23}=\mueu_{M_{23}}(K_{23})\in\MHo[]{23}\simeq\Hom(\omDAI[2]\letens\dA[3],\dA[2]\letens\omDA[3])$:
\eqn
\xymatrix{
\dA[13]\ar[r]\ar `d[ddr] [ddr]
   &{\db{K_{12}\sconv[22](\omDAI[2]\letens\dA[3])}}
\ar[r]^-{\lambda_{23}}\ar[d]
&{\db{ K_{12}\conv[22](\dA[2]\letens\omDA[3])}}\ar[r]&\omDA[13]\\
&K_{12}\sconv[22]\tK_{23}&&\\
&K_{12}\conv[22]\tK_{23}  \ar[ruu]
\ar`r[rruu][rruu]\ar[u]^-\bwr
}\eneqn
The composition of the arrows on the bottom is
$\mueu_{M_{13}}(K_{13})$ and the  composition of the
arrows on the top is $\Phi_{K_{12}}(\mueu_{M_{23}}(K_{23}))$.
Hence, the assertion follows from
the commutativity of the diagram by Proposition~\ref{pro:HH4}.

\noindent
(c) follows from (b) and Proposition~\ref{pro:HH4}.
\end{proof}

\section{Operations on microlocal Euler classes II}

We shall combine Theorems~\ref{th:muconv} and~\ref{th:HH1} and make more explicit
the operations on microlocal Euler classes for direct or inverse images.
In particular, applying our results to the  case of constructible sheaves,
we shall recover the results of~\cite[Ch.~IX~\S5]{KS90}.

Let $M$ be a manifold and let $\iota\cl N\into M$ be closed embedding of a smooth submanifold $N$.
If there is no risk of confusion, we shall still denote by $\cor_N$ and $\omega_N$ the sheaves
$\oim{\iota}\cor_N$ and $\oim{\iota}\omega_N$ on $M$.
Then $\cor_N$ is cohomologically  constructible and moreover
\eqn
&&\RD_M\cor_N= \rhom(\cor_N,\omega_M)\simeq \omega_{N}.
\eneqn
Hence,  $\HK(\cor_N)=\cor_N\letens\omega_{N}$ is a trace kernel on $M$.

 Let $M_i$ be a manifold ($i=1,2$), let $K_i$ be a trace kernel on $M_i$ and let
$\Lambda_{ii}$ be a closed conic subset of $T^*M_{ii}$ with $\SSi(K_i)\subset\Lambda_{ii}$.
We set
\eqn
&&\Lambda_i=\Lambda_{ii}\cap T^*_{\Delta_i}M_{ii}.
\eneqn
For  a morphism of manifolds $f\cl M_1\to M_2$, we  denote by $\Gamma_f$ its graph, a smooth closed
submanifold of $M_{12}$ and we set for short
\eqn
&&\Lambda_f\eqdot T^*_{\Gamma_f}(M_{12}),\quad \tw f=(f,f)\cl M_{11}\to M_{22}.
\eneqn
Recall the diagram~\eqref{eq:fdfpi}
\eqn\label{eq:fdfpi2}
&&\xymatrix@C=10ex{
T^*M_1\ar[dr]_-{\pi_{M_1}}
                 &M_1\times_{M_2}T^*M_2\ar[d]^\pi\ar[l]_-{f_d}\ar[r]^-{f_\pi}
                                        & T^*M_2\ar[d]^-{\pi_{M_2}}\\
                  &M_1\ar[r]^-f   &M_2.
}\eneqn
Note that
\eqn
\Lambda_{11}\aconv[11]\Lambda_{\tw f}=\tw f_\pi\opb{\tw f_d}\Lambda_{11},\quad
\Lambda_{\tw f}\aconv[22]\Lambda_{22}=\tw f_d\opb{\tw f_\pi}\Lambda_{22}.
\eneqn
In the sequel, we shall identify $M_{1212}$ with $M_{1122}$.
We take as kernel the sheaf $\HK(\cor_{\Gamma_f})$. Then
\eq\label{eq:HKgammaf}
\HK(\cor_{\Gamma_f})&=&\cor_{\Gamma_f}\letens\omega_{\Gamma_f}\simeq
\cor_{\Gamma_{\tw f}}\tens(\cor_1\letens\omega_1\letens\cor_{22})\\
&\simeq&\omDA[1]\conv[11]
\bl(\omega_1^{\otimes-1}\letens\omega_1\letens \cor_{22})
\ltens\cor_{\Gamma_{\tw f}}\br.\nonumber
\eneq
Moreover, we have (see~\eqref{eq:euF=euHKF}):
\eqn
&&\mueu_{M_{12}}(\HK(\cor_{\Gamma_f}))=\mueu_{M_{12}}(\cor_{\Gamma_f}).
\eneqn
Also note that
\eqn
\reim{\tw f}K_1\simeq K_1\conv[11]\cor_{\Gamma_{\tw f}},\quad
\opb{\tw f}K_2\simeq \cor_{\Gamma_{\tw f}}\conv[22]K_2.
\eneqn

\subsubsection*{External product}
Applying Theorem~\ref{th:muconv} with $M_2=\rmpt$ and $M_3$ being here $M_2$,
we get the commutative diagram
\eqn
\xymatrix{
\MH[\Lambda_1]{M_1}\letens\MH[\Lambda_2]{M_2}\ar[r]^-{\conv}\ar[d]^-\sim&\MH[\Lambda_1\times\Lambda_2]{M_{12}}\ar[d]^-\sim\\
\rsect_{\Lambda_1}(\opb{\pi_{M_1}}\omega_{M_1})\letens\rsect_{\Lambda_2}(\opb{\pi_{M_2}}\omega_{M_{2}})\ar[r]^-\letens
                       &\rsect_{\Lambda_1\times\Lambda_2}(\opb{\pi_{M_{12}}}\omega_{M_{12}})
}\eneqn
and taking the global sections and the $0$-th cohomology,
\eqn
\xymatrix{
\MHo[\Lambda_1]{M_1}\tens\MHo[\Lambda_2]{M_2}\ar[r]^-\conv\ar[d]^-\sim&\MHo[\Lambda_1\times\Lambda_2]{M_{12}}\ar[d]^-\sim\\
H^0_{\Lambda_1}(T^*M_1;\opb{\pi_{M_1}}\omega_{M_1})\tens H^0_{\Lambda_2}(T^*M_2;\opb{\pi_{M_2}}\omega_{M_{2}})\ar[r]^-\letens
                       &H^0_{\Lambda_1\times\Lambda_2}(T^*M_{12};\opb{\pi_{M_{12}}}\omega_{M_{12}}).
}\eneqn
Applying Theorem~\ref{th:HH1}, we obtain
\begin{proposition}\label{pro:extmueu}
The object $K_1\letens K_2$ is a trace kernel on $M_{12}$ and
\eqn
&&\mueu_{M_{12}}(K_1\letens K_2)=\mueu_{M_1}(K_1)\letens\mueu_{M_2}(K_2).
\eneqn
\end{proposition}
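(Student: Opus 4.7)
The plan is to deduce this from Theorem~\ref{th:HH1} by taking the middle manifold to be a point. Concretely, I apply Theorem~\ref{th:HH1} with the relabeling $M_1 \mapsto M_1$, $M_2 \mapsto \rmpt$, $M_3 \mapsto M_2$, so that the object called $M_{12}$ in the theorem becomes the current $M_1$, the object called $M_{23}$ becomes $M_2$, and the object called $M_{13}$ becomes $M_{12}$ of the present statement.

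Under this relabeling, the properness hypothesis~\eqref{hyp:112233proper} is automatic because $T^*\rmpt$ is a point; the twist $\tK_2 = \omDAI[\rmpt]\conv[\rmpt] K_2$ reduces to $K_2$ since $\omega_\rmpt \simeq \cor$; and the composition $K_1\conv[22]\tK_2$ over a point reduces to the external product $K_1\letens K_2$ (modulo the canonical permutation $M_{1122} \simeq M_{1212}$ matching $\Delta_{M_{12}}$ with the image of $\Delta_{M_1}\times\Delta_{M_2}$). Theorem~\ref{th:HH1}(a) then yields that $K_1\letens K_2$ is a trace kernel on $M_{12}$, with trace morphisms $u_1\letens u_2$ and $v_1\letens v_2$ via the canonical isomorphisms $\dA[12] \simeq \dA[1]\letens \dA[2]$ and $\omDA[12] \simeq \omDA[1]\letens \omDA[2]$.

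For the equality of classes, Theorem~\ref{th:HH1}(b) gives
\[
\mueu_{M_{12}}(K_1\letens K_2) \;=\; \mueu_{M_1}(K_1) \aconv[\rmpt] \mueu_{M_2}(K_2)
\]
in $\MHo[\Lambda_1\times\Lambda_2]{M_{12}}$. Finally, the commutative diagram displayed just before the proposition—which is Theorem~\ref{th:muconv} specialized to the case of a point middle manifold—identifies $\aconv[\rmpt]$ on Lagrangian cycles with the external product $\letens$ on $\opb{\pi}\omega$, delivering the announced formula.

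The only point requiring care is the bookkeeping of identifications: the canonical permutation $M_{1212}\simeq M_{1122}$ sending $\Delta_{M_{12}}$ to $\Delta_{M_1}\times\Delta_{M_2}$, and the compatibility with the microlocal identification $T^*M_{12}\simeq T^*_{\Delta_{M_{12}}}(M_{12}\times M_{12})$ used to define $\mueu$. I do not anticipate any genuine obstacle beyond this verification, since Theorems~\ref{th:HH1} and~\ref{th:muconv} do all the substantive work.
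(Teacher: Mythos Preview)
Your proposal is correct and follows essentially the same approach as the paper: specialize Theorems~\ref{th:HH1} and~\ref{th:muconv} to the case where the middle manifold is a point, so that composition over $\rmpt$ becomes the external product. The paper's own argument is just the two displayed diagrams preceding the statement (obtained from Theorem~\ref{th:muconv} with $M_2=\rmpt$) together with the sentence ``Applying Theorem~\ref{th:HH1}, we obtain'', and your write-up simply spells out the bookkeeping that the paper leaves implicit.
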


\subsubsection*{Direct image}
Let $f\cl M_1\to M_2$  and $\Gamma_f$ be as above.
Applying Theorem~\ref{th:muconv}  with $M_1=\rmpt$ and $M_2,\, M_3$ 
being the current $M_1,\, M_2$, 
we get the commutative diagram
\eqn
\xymatrix{
\MH[]{M_1}\aconv[1]\MH[]{M_{12}}\ar[r]\ar[d]^-\sim&\MH[]{M_{2}}\ar[d]^-\sim\\
\opb{\pi_{M_1}}\omega_{M_1}\aconv[1]\opb{\pi_{M_{12}}}\omega_{M_{12}}\ar[r]
                       &\opb{\pi_{M_2}}\omega_{M_2}.
}\eneqn
Now we assume
\eq\label{hyp:fproper}
&&\parbox{65ex}{
$f$ is proper on  $\Lambda_{1}\cap T^*_{M_{1}}M_{1}$, or, equivalently, $f_\pi$ is proper on $\opb{f_d}\Lambda_1$.
}
\eneq
We set
\eqn
&&f_\mu(\Lambda_1)=\Lambda_1\circ\Lambda_f= f_\pi(\opb{f_d}(\Lambda_1)).
\eneqn
Taking the global sections and the $0$-th cohomology of the diagram above, we obtain the commutative diagram:
\eqn
\xymatrix{
\MHo[\Lambda_1]{M_1}\ar[rr]^{\circ \mueu(\cor_{\Gamma_f})}\ar[d]^-\sim&&\MHo[f_\mu\Lambda_1]{M_{2}}\ar[d]^-\sim\\
H^0_{\Lambda_1}(T^*M_1;\opb{\pi_{M_1}}\omega_{M_1})\ar[rr]^-{\circ \mueu(\cor_{\Gamma_f})}
                       &&H^0_{f_\mu\Lambda_1}(T^*M_2;\opb{\pi_{M_{2}}}\omega_{M_{2}}).
}\eneqn
We have natural morphism and isomorphisms, already constructed in \cite{KS90}:
\eqn
\eim{f_\pi}\opb{f_d}\opb{\pi_{M_1}}\omega_{M_1}&\simeq&\eim{f_\pi}\opb{\pi}\omega_{M_1}
\simeq\opb{\pi_{M_2}}\eim{f}\omega_{M_1}\\
&\to&\opb{\pi_{M_2}}\omega_{M_2}.
\eneqn
It induces a morphism:
\eqn
&&f_\mu\cl \rsect_{\Lambda_1}(\opb{\pi_{M_1}}\omega_{M_1})\to\rsect_{f_\mu\Lambda_1}(\opb{\pi_{M_2}}\omega_{M_2}).
\eneqn
\begin{lemma}
Let $\lambda\in H^0_{\Lambda_1}(T^*M_1;\opb{\pi_{M_1}}\omega_{M_1})$. Then
$\lambda\circ \mueu_{M_{12}}(\cor_{\Gamma_f})=f_\mu(\lambda)$.
\end{lemma}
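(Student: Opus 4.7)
The plan is to reduce the statement, via \emph{Theorem~\ref{th:muconv}}, to a computation on $\opb{\pi}\omega$, then to compute $\mueu_{M_{12}}(\cor_{\Gamma_f})$ explicitly as the fundamental cycle of the conormal bundle $\Lambda_f=T^*_{\Gamma_f}(M_{12})$, and finally to match the cup-then-integrate prescription of \emph{Remark~\ref{rem:muconv2}} with the explicit formula $f_\mu=\reim{f_\pi}\circ\opb{f_d}$ given just before the lemma.

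Applying \emph{Remark~\ref{rem:muconv2}} with middle manifold $M_1$ and outer manifolds $\rmpt$ and $M_2$ (and using the isomorphism of Remark~\ref{rem:1}\,(i) to identify $\MH[\Lambda]{M}$ with $\rsect_\Lambda\opb{\pi_M}\omega_M$), the left-hand side of the lemma becomes
\eqn
&&\lambda\circ\mueu_{M_{12}}(\cor_{\Gamma_f})
=\int_{T^*M_1}\lambda\cup\mueu_{M_{12}}(\cor_{\Gamma_f})
\quad\in\quad H^0_{f_\mu\Lambda_1}\bl T^*M_2;\opb{\pi_{M_2}}\omega_{M_2}\br.
\eneqn
Hence it suffices to verify two things: (a) that $\mueu_{M_{12}}(\cor_{\Gamma_f})$ equals the fundamental Lagrangian cycle of $\Lambda_f$ carrying its canonical orientation, and (b) that cup-product with this cycle followed by integration over the fibres of $T^*M_{12}\to T^*M_2$ reproduces the map $f_\mu=\reim{f_\pi}\circ\opb{f_d}$.

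For (a), since $\Gamma_f$ is a closed submanifold of $M_{12}$ we have $\SSi(\cor_{\Gamma_f})=\Lambda_f$ and $\RD_{M_{12}}\cor_{\Gamma_f}\simeq\omega_{\Gamma_f}$, so Lemma~\ref{lem:HHmor12} gives the morphisms $\dA[12]\to\cor_{\Gamma_f}\letens\omega_{\Gamma_f}\to\omDA[12]$. Unwinding Definition~\ref{def:dpmueu} and combining with the isomorphism $\muhom(\dA[12],\omDA[12])\simeq(\delta^a_{T^*M_{12}})_*\opb{\pi_{M_{12}}}\omega_{M_{12}}$ of Remark~\ref{rem:1}\,(i), the resulting class is exactly the characteristic cycle of the constant sheaf on a closed submanifold, which is the fundamental cycle $[\Lambda_f]$ with its canonical orientation induced by $\omega_{\Gamma_f}$.

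For (b), the second projection $T^*M_{12}\to T^*M_2$ restricts to an isomorphism $\Lambda_f\isoto M_1\times_{M_2}T^*M_2$, under which the two natural projections $\Lambda_f\to T^*M_i$ ($i=1,2$) coincide with the maps $f_d$ and $f_\pi$ of diagram~\eqref{eq:fdfpi}. Cupping with the fundamental cycle of the Lagrangian section $\Lambda_f$ then implements the pullback $\opb{f_d}\lambda$, supported on $\opb{f_d}\Lambda_1\subset\Lambda_f$, and fibre integration of $\opb{\pi_{M_{12}}}\omega_{M_{12}}$ along $T^*M_{12}\to T^*M_2$ becomes, via the same chain of isomorphisms $\reim{f_\pi}\opb{\pi}\omega_{M_1}\simeq\opb{\pi_{M_2}}\reim{f}\omega_{M_1}\to\opb{\pi_{M_2}}\omega_{M_2}$ used to define $f_\mu$, the proper pushforward $\reim{f_\pi}$. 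Putting steps (a) and (b) together yields $\lambda\circ\mueu_{M_{12}}(\cor_{\Gamma_f})=\reim{f_\pi}\opb{f_d}\lambda=f_\mu(\lambda)$.

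The principal obstacle is step (a): one must check carefully that the orientation on $[\Lambda_f]$ produced by the composition $\dA[12]\to\cor_{\Gamma_f}\letens\omega_{\Gamma_f}\to\omDA[12]$ matches the canonical one, without sign discrepancy. This is the standard identification of the characteristic cycle of $\cor_N$ for a closed submanifold $N$, treated in~\cite[\S9.4]{KS90} in the $\R$-constructible setting and valid verbatim for closed $C^\infty$-submanifolds since $\cor_{\Gamma_f}$ is cohomologically constructible. Once this is settled, step (b) reduces to a local computation in coordinates adapted to $f$.
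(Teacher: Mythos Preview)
The paper states this lemma without proof; it is evidently meant to follow directly from the machinery assembled just before it, namely Theorem~\ref{th:muconv} (and its unpacking in Remark~\ref{rem:muconv2}) together with the explicit description of $f_\mu$ via $\reim{f_\pi}\opb{f_d}$. Your argument is precisely the natural way to fill in that gap: you use Remark~\ref{rem:muconv2} to rewrite $\lambda\aconv[1]\mueu_{M_{12}}(\cor_{\Gamma_f})$ as a cup-then-integrate, identify $\mueu_{M_{12}}(\cor_{\Gamma_f})$ with the Lagrangian cycle $[\Lambda_f]$, and match the geometry of $\Lambda_f\simeq M_1\times_{M_2}T^*M_2$ with the maps $f_d,f_\pi$. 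This is correct and is, in effect, what the paper is tacitly asserting.

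One small precision in step~(b): when you say ``the two natural projections $\Lambda_f\to T^*M_i$ coincide with $f_d$ and $f_\pi$,'' it is $p_{1^a}\vert_{\Lambda_f}$ (not $p_1$) that equals $f_d$ under the identification $\Lambda_f\simeq M_1\times_{M_2}T^*M_2$. This is exactly consistent with the $p_{12^a}$ appearing in the cup-product formula of Remark~\ref{rem:muconv2}, so your computation goes through, but make the antipodal map explicit. With that in place, the sign/orientation check you flag in step~(a) is indeed the standard identification of the characteristic cycle of $\cor_N$ for a closed submanifold $N$ (multiplicity~$1$ along $T^*_NM$), and your reference to \cite[\S9.4]{KS90} is the right one.
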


\begin{proposition}\label{pro:directmueu}
Assume that $\tw f$ is proper on $\Lambda_{11}\cap T^*_{M_{11}}M_{11}$.
Then the object $\reim{\tw f} K_1$ is a trace kernel on $M_2$ and
\eqn
\mueu_{M_2}(\reim{\tw f}K_1)
                &=&\mueu_{M_1}(K_1)\aconv[1]\mueu_{M_{12}}(\cor_{\Gamma_f})\\
                &=&f_\mu(\mueu_{M_1}(K_1)).
\eneqn
\end{proposition}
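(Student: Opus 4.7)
The plan is to reduce the proposition to Theorem~\ref{th:HH1}, composing the trace kernel $K_1$ with the trace kernel $\HK(\cor_{\Gamma_f})$ associated with the graph of $f$, and then to invoke the preceding lemma to rewrite the resulting composition as the push-forward $f_\mu$.

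First, I would apply Theorem~\ref{th:HH1} after the relabelling in which the theorem's $M_1$ and $M_2$ are both the current $M_1$, and the theorem's $M_3$ is the current $M_2$. I would choose $K_{12} = K_1$, viewed as a trace kernel on $M_1\times M_1$, and $K_{23}= \HK(\cor_{\Gamma_f})$, the trace kernel on $M_1\times M_2$ associated with $\cor_{\Gamma_f}$. Using the explicit formula~\eqref{eq:HKgammaf} together with the identification $\reim{\tw f} K_1\simeq K_1\conv[11]\cor_{\Gamma_{\tw f}}$ recalled above, the composed kernel $K_{13} = K_{12}\conv[22]\tK_{23}$ is canonically isomorphic to $\reim{\tw f} K_1$. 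This gives assertion~(i).

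Next, I would verify that the microlocal properness hypothesis~\eqref{hyp:112233proper} is implied by the properness of $\tw f$ on $\Lambda_{11}\cap T^*_{M_{11}}M_{11}$. Since $\SSi(\HK(\cor_{\Gamma_f}))$ is contained in the conormal bundle $\Lambda_{\tw f}$, after reshuffling the factors of $(M_1\times M_2)\times(M_1\times M_2)$ into $M_{1122}$, unravelling the definition of $\Lambda_{1122}\atimes[22]\Lambda_{2233}$ reduces the required properness over $T^*M_{1133}$ to the stated hypothesis on $\tw f$. With this in hand, Theorem~\ref{th:HH1}~(b) yields
\eqn
&&\mueu_{M_2}(\reim{\tw f} K_1) \;=\; \mueu_{M_1}(K_1)\aconv[1]\mueu_{M_{12}}(\HK(\cor_{\Gamma_f})),
\eneqn
and the right-hand side equals $\mueu_{M_1}(K_1)\aconv[1]\mueu_{M_{12}}(\cor_{\Gamma_f})$ by~\eqref{eq:euF=euHKF}. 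This establishes the first equality in~(ii).

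Finally, the second equality in~(ii), namely $\mueu_{M_1}(K_1)\aconv[1]\mueu_{M_{12}}(\cor_{\Gamma_f}) = f_\mu(\mueu_{M_1}(K_1))$, is precisely the preceding lemma applied to $\lambda=\mueu_{M_1}(K_1)$. The main obstacle will be the bookkeeping in the first two steps: correctly reshuffling the factors of $M_{1122}$ and $M_{2233}$, and matching the stated properness hypothesis on $\tw f$ with~\eqref{hyp:112233proper}, since the microsupports involved live in a product of four factors whose interaction with the various diagonals must be tracked carefully.
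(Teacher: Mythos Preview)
Your overall strategy matches the paper's: reduce to Theorem~\ref{th:HH1} with $K_{23}$ built from $\HK(\cor_{\Gamma_f})$, then invoke the preceding lemma. Two points need correction.

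First, your relabelling is wrong. With the theorem's $M_1$ and $M_2$ both equal to the current $M_1$, the theorem's $K_{12}$ would be a trace kernel on $M_1\times M_1$, hence an object of $\Derb(\cor_{M_1^4})$; you cannot take it to be $K_1\in\Derb(\cor_{M_{11}})$. The paper's relabelling is $M_1=\rmpt$, $M_2=$ current $M_1$, $M_3=$ current $M_2$: then the theorem's $M_{12}$ is $M_1$, so $K_{12}=K_1$ makes sense, the theorem's $M_{23}$ is $M_{12}$, and the theorem's $\conv[22]$ becomes $\conv[11]$.

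Second, with the naive choice $K_{23}=\HK(\cor_{\Gamma_f})$ the identification $K_{13}\simeq\reim{\tw f}K_1$ fails. Indeed $\tK_{23}=\omDAI[1]\conv[1]\HK(\cor_{\Gamma_f})\simeq(\omAI[1]\letens\omega_1\letens\cor_{22})\ltens\cor_{\Gamma_{\tw f}}$ by~\eqref{eq:HKgammaf}, so $K_1\conv[11]\tK_{23}\simeq\reim{\tw f}\bigl(K_1\ltens(\omAI[1]\letens\omega_1)\bigr)$, which carries an invertible twist that does not pull out through $\reim{\tw f}$. The paper fixes this by taking $K_{23}=(\omega_1^{\otimes-1}\letens\omega_1\letens\cor_{22})\ltens\HK(\cor_{\Gamma_f})$: then $\tK_{23}\simeq(\omega_1^{\otimes-2}\letens\omega_1^{\otimes2}\letens\cor_{22})\ltens\cor_{\Gamma_{\tw f}}\simeq\cor_{\Gamma_{\tw f}}$ canonically (since $\omega_1^{\otimes2}$ is canonically trivial up to shift), and one gets $K_{13}\simeq\reim{\tw f}K_1$ on the nose. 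Proposition~\ref{prop:ambg} guarantees this twist does not affect the Euler class, so the conclusion of Theorem~\ref{th:HH1}(b) still gives $\mueu_{M_1}(K_1)\aconv[1]\mueu_{M_{12}}(\cor_{\Gamma_f})$. This is exactly the ``bookkeeping'' you flagged; it is not optional.
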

\begin{proof}
Note that
$\mueu_{M_{12}}(\cor_{\Gamma_f})=
\mueu_{M_{12}}\bl(\omega_1^{\otimes-1}\letens\omega_1\letens\cor_{22})\ltens
\HK(\cor_{\Gamma_f})\br$ by Proposition~\ref{prop:ambg}.
We have $\reim{\tw f} K_1\simeq K_1\conv[11]
\Bigl(\omDAI[1]\conv[1]\bl(\omega_1^{\otimes-1}\letens\omega_1\letens\cor_{22})\ltens
\HK(\cor_{\Gamma_f})\br\Bigr)$. It remains to
 apply Theorem~\ref{th:HH1} in which one replaces $M_1, M_2, M_3$ with $\rmpt, M_1,M_2$, respectively.
\end{proof}

\subsubsection*{Inverse image}
Let $f\cl M_1\to M_2$  and $\Gamma_f$ be as above.
Applying Theorem~\ref{th:muconv}
 with $M_3=\rmpt$,  we get the commutative diagram

\eqn
\xymatrix{
\MH[]{M_{12}}\aconv[2]\MH[]{M_{2}}\ar[r]\ar[d]^-\sim&\MH[]{M_{1}}\ar[d]^-\sim\\
\opb{\pi_{M_{12}}}\omega_{M_{12}}\aconv[2]\opb{\pi_{M_{2}}}\omega_{M_{2}}\ar[r]
                       &\opb{\pi_{M_1}}\omega_{M_1}.
}\eneqn
Now we assume

\eq\label{hyp:fnonchar}
&&\parbox{65ex}{
$f$ is non characteristic for $\Lambda_2$, or, equivalently, $f_d$ is proper on $\opb{f_\pi}\Lambda_2$.
}\eneq
We set
\eqn
&&f^\mu(\Lambda_2)=\Lambda_f\circ\Lambda_1= f_d(\opb{f_\pi}(\Lambda_2)).
\eneqn
Taking the global sections and the $0$-th cohomology of the diagram above, we obtain the commutative diagram:
\eqn
\xymatrix{
\MHo[\Lambda_2]{M_2}\ar[rr]^{\mueu(\cor_{\Gamma_f})\circ}\ar[d]^-\sim&&\MHo[f^\mu\Lambda_2]{M_{1}}\ar[d]^-\sim\\
H^0_{\Lambda_2}(T^*M_2;\opb{\pi_{M_2}}\omega_{M_2})\ar[rr]^-{\mueu(\cor_{\Gamma_f})\circ}
                       &&H^0_{f^\mu\Lambda_2}(T^*M_1;\opb{\pi_{M_{1}}}\omega_{M_{1}}).
}\eneqn
We have a natural morphism  constructed in the proof of~\cite[Prop.~9.3.2]{KS90}:
\eqn
f^\mu\cl\eim{f_d}\opb{f_\pi}\opb{\pi_{M_2}}\omega_{M_2}
&\to&\opb{\pi_{M_1}}\omega_{M_1}.
\eneqn
Hence, we get a map:
\eqn
&&f^\mu\cl \rsect_{\Lambda_2}(\opb{\pi_{M_2}}\omega_{M_2})\to\rsect_{f^\mu\Lambda_2}(\opb{\pi_{M_1}}\omega_{M_1}).
\eneqn
\begin{lemma}
Let $\lambda\in H^0_{\Lambda_1}(T^*M_2;\opb{\pi_{M_2}}\omega_{M_2})$. Then
$ \mueu_{M_{12}}(\cor_{\Gamma_f})\circ\lambda=f^\mu(\lambda)$.
\end{lemma}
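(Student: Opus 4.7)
The plan is to reduce the identity to a direct comparison of two explicit constructions, both living in $H^0_{f^\mu\Lambda_2}(T^*M_1;\opb{\pi_{M_1}}\omega_{M_1})$, by passing through Theorem~\ref{th:muconv}(ii), which translates composition of microlocal homology classes into the cup product followed by integration on $\opb{\pi}\omega$.

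First, I would unpack the class $\mueu_{M_{12}}(\cor_{\Gamma_f})$. Since $\cor_{\Gamma_f}=\oim{\iota_f}\cor_{\Gamma_f}$ for the graph embedding $\iota_f\cl \Gamma_f\into M_{12}$, the trace kernel $\HK(\cor_{\Gamma_f})\simeq\cor_{\Gamma_f}\letens\omega_{\Gamma_f}$ has Euler class equal (via $\mueu_{M_{12}}(\HK(\cor_{\Gamma_f}))=\mueu_{M_{12}}(\cor_{\Gamma_f})$) to the fundamental Lagrangian cycle of $T^*_{\Gamma_f}M_{12}$. Under Remark~\ref{rem:1}(i), this class identifies with the canonical generator of $H^0_{T^*_{\Gamma_f}M_{12}}(T^*M_{12};\opb{\pi_{M_{12}}}\omega_{M_{12}})$ induced from the orientation of $\Gamma_f\simeq M_1$ and the Thom isomorphism for the conormal bundle. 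This is precisely the ingredient singled out in the construction of $f^\mu$ in~\cite[Prop.~9.3.2]{KS90}.

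Next, I would apply Theorem~\ref{th:muconv}(ii) with $M_3=\rmpt$ to translate $\mueu_{M_{12}}(\cor_{\Gamma_f})\aconv[2]\lambda$ into the composition of the underlying classes in $\opb{\pi}\omega$. By Remark~\ref{rem:muconv2}, this composition equals
\[
\int_{T^*M_2}\mueu_{M_{12}}(\cor_{\Gamma_f})\cup\lambda.
\]
Substituting the explicit description of $\mueu_{M_{12}}(\cor_{\Gamma_f})$ from the previous step, the cup product with this Thom class localises the integrand to the fibre product $M_1\times_{M_2}T^*M_2$, i.e.\ to $\opb{f_\pi}\Lambda_2$ inside $T^*M_{12}\times T^*M_2$; the non-characteristic assumption~\eqref{hyp:fnonchar} ensures $f_d$ is proper on this set, so the integration $p_{13*}$ in~\eqref{eq:convolution_of_sets} becomes the proper direct image $\reim{f_d}\opb{f_\pi}$. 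What remains on the right-hand side is exactly the morphism
\[
\reim{f_d}\opb{f_\pi}\opb{\pi_{M_2}}\omega_{M_2}\to\opb{\pi_{M_1}}\omega_{M_1}
\]
used in~\cite[Prop.~9.3.2]{KS90} to define $f^\mu(\lambda)$.

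The main obstacle is verifying that the abstract ``cup-and-integrate'' recipe, once specialised to the Thom class of $T^*_{\Gamma_f}M_{12}$, coincides with the morphism constructed in loc.\ cit.\ via the commutative square in~\eqref{eq:fdfpi}; this amounts to a compatibility of orientation/Thom-class conventions between $\omega_{\Gamma_f}$ and the relative dualizing complex along $f_d$. Both conventions are normalised by pullback of the fundamental class of $\Gamma_f$, so after a local computation in a coordinate chart where $f$ is projection $\R^n\times\R^k\to\R^n$ (cf.\ the end-of-proof reduction used in Theorem~\ref{th:muconv}), the two constructions agree. Once this local match is established, functoriality of $\muhom$ and of the cup product propagate the identification globally, finishing the proof.
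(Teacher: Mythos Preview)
The paper states this lemma without proof; it is left to the reader as a direct consequence of the commutative diagram displayed immediately before it (obtained from Theorem~\ref{th:muconv} with $M_3=\rmpt$) together with the explicit description of the bottom arrow given in Remark~\ref{rem:muconv2}. Your proposal is precisely a fleshed-out version of this implicit argument: you invoke Theorem~\ref{th:muconv}(ii) and Remark~\ref{rem:muconv2} to rewrite $\mueu_{M_{12}}(\cor_{\Gamma_f})\aconv[2]\lambda$ as $\int_{T^*M_2}\mueu_{M_{12}}(\cor_{\Gamma_f})\cup\lambda$, identify $\mueu_{M_{12}}(\cor_{\Gamma_f})$ with the Lagrangian cycle of $T^*_{\Gamma_f}M_{12}$, and then match the resulting cup-and-integrate with the morphism $\eim{f_d}\opb{f_\pi}\opb{\pi_{M_2}}\omega_{M_2}\to\opb{\pi_{M_1}}\omega_{M_1}$ defining $f^\mu$ in~\cite[Prop.~9.3.2]{KS90}.

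This is the correct route and essentially the only one the paper's setup allows. The one point where you are honest about a residual check---the compatibility of orientation conventions between the Thom class of $T^*_{\Gamma_f}M_{12}$ and the relative dualizing complex along $f_d$---is real but routine: both are normalised so that the class of $\cor_{\Gamma_f}$ is the pushforward of $1\in H^0(\Gamma_f;\cor)$ under the Gysin map, and the local reduction to a projection (as at the end of the proof of Theorem~\ref{th:muconv}) settles it. So your argument is sound and matches what the paper intends.
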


\begin{proposition}\label{pro:inversmueu}
Assume that $\tw f$ is non characteristic with respect to $\Lambda_{22}$.
Then the object  $(\cor_1\letens\omega_{M_1/M_2})\ltens\opb{\tw f}K_2$ is a trace kernel on $M_1$ and
\eqn
\mueu_{M_1}\bl\omDA[1]\conv[1]\opb{\tw f}(\omDAI[2]\conv[2]K_2)\br
&=&\mueu_{M_{12}}(\cor_{\Gamma_f})\aconv[2]\mueu_{M_2}(K_2)\\
&=&f^\mu(\mueu_{M_2}(K_2)).
\eneqn
\end{proposition}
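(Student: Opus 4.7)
The approach mirrors the proof of Proposition~\ref{pro:directmueu}, now with $\opb{\tw f}$ replacing $\reim{\tw f}$ and the non-characteristic hypothesis replacing the properness one. The assumption that $\tw f$ is non-characteristic for $\Lambda_{22}$, equivalently that $\tw f_d$ is proper on $\opb{\tw f_\pi}\Lambda_{22}$, is precisely what is needed to verify hypothesis~\eqref{hyp:112233proper} of Theorem~\ref{th:HH1} when $\Lambda_{1122}=\Lambda_{\tw f}\supset\SSi(\HK(\cor_{\Gamma_f}))$.

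The decisive step is to identify the candidate kernel as a $\conv[22]$-composition. Starting from $\opb{\tw f}K_2\simeq\cor_{\Gamma_{\tw f}}\conv[22]K_2$ and the expression~\eqref{eq:HKgammaf} for $\HK(\cor_{\Gamma_f})$, a direct computation (absorbing invertible off-diagonal factors by Proposition~\ref{prop:ambg}) produces an isomorphism
\[
\omDA[1]\conv[1]\opb{\tw f}(\omDAI[2]\conv[2]K_2)\;\simeq\;\HK(\cor_{\Gamma_f})\conv[22]\tw K_2,\qquad\tw K_2\eqdot\omDAI[2]\conv[2]K_2.
\]
The twists $\omDA[1]$ on the outside and $\omDAI[2]$ in the middle are exactly those arising when matching the $\sconv$ and $\conv$ in Theorem~\ref{th:HH1}; they convert $\opb{\tw f}$ into the trace-kernel shape required to recognize the composition.

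Once this identification is in place, Theorem~\ref{th:HH1} applied with $(M_1,M_2,M_3)$ played by $(M_1,M_2,\rmpt)$, $K_{12}=\HK(\cor_{\Gamma_f})$ and $K_{23}=K_2$ yields simultaneously that the candidate object is a trace kernel on $M_1$ and that
\[
\mueu_{M_1}\bl\HK(\cor_{\Gamma_f})\conv[22]\tw K_2\br=\mueu_{M_{12}}(\cor_{\Gamma_f})\aconv[2]\mueu_{M_2}(K_2),
\]
where~\eqref{eq:euF=euHKF} is used to replace $\mueu(\HK(\cor_{\Gamma_f}))$ by $\mueu(\cor_{\Gamma_f})$. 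The remaining equality with $f^\mu(\mueu_{M_2}(K_2))$ is then the content of the lemma stated immediately above the proposition, read through the commutative square of Theorem~\ref{th:muconv} that intertwines the abstract $\aconv[2]$-composition in microlocal homology with the concrete $\aconv[2]$-composition in $\opb{\pi}\omega$.

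The main obstacle I anticipate is the bookkeeping in the identification step: one must keep track of on which factor of $M_{11}$ or $M_{22}$ each $\omega_{M_i}^{\pm 1}$ sits, and verify that the isomorphism intertwines the trace-kernel morphisms $u,v$ of Lemma~\ref{le:HH3} (for the composed kernel) with the natural $u,v$ obtained by pulling back the trace-kernel structure of $K_2$ via $\opb{\tw f}$. Beyond this, the proof is a line-by-line transcription of the proof of Proposition~\ref{pro:directmueu}.
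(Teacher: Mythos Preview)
Your proposal is correct and follows essentially the same strategy as the paper: apply Theorem~\ref{th:HH1} with $M_3=\rmpt$, identify the candidate kernel as $\HK(\cor_{\Gamma_f})\conv[22]$ a twist of $K_2$, and then invoke Proposition~\ref{prop:ambg} together with the preceding lemma to conclude. The only cosmetic difference is that the paper places the invertible twist on $K_2$ (taking $K_{23}=(\omega_2\letens\omega_2^{\otimes-1})\ltens K_2$) so that the identification with $(\cor_1\letens\omega_{M_1/M_2})\ltens\opb{\tw f}K_2$ is an honest isomorphism and Proposition~\ref{prop:ambg} is invoked only at the level of Euler classes, whereas you fold the twist into the identification step; this is exactly the bookkeeping issue you flag, and either placement works.
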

\begin{proof}
Applying Theorem~\ref{th:HH1} with $M_3=\rmpt$, we get
that $$
(\cor_1\letens\omega_{M_1/M_2})\ltens\opb{\tw f}K_2
\simeq\HK(\cor_f)\conv[22](\omDAI[2]\conv[2](\omega_2\letens\omega_2^{\otimes-1})\ltens K_2)$$ is a trace kernel.
Since  $\eu_{M_2}\bl(\omega_2\letens\omega_2^{\otimes-1})\ltens K_2)\br
=\mueu_{M_2}(K_2)$ by Proposition~\ref{prop:ambg}, we obtain the result.
\end{proof}

\subsubsection*{Tensor product}
Consider now the case where $M_1=M_2=M$ and the $\Lambda_{ii}$'s satisfy
 the transversality condition
\eq\label{eq:transvlambda}
&&\Lambda_{11}\cap\Lambda_{22}^a\subset T^*_{M\times M}(M\times M).
\eneq
Then
by composing the external product   with the restriction to the diagonal,
we get a  convolution map
\eq
\star\cl\RMH[\Lambda_1]{M}\times\RMH[\Lambda_2]{M}\to\RMH[\Lambda_1+\Lambda_2]{M}.
\eneq
Applying Propositions~\ref{pro:extmueu} and~\ref{pro:inversmueu}, we get:
\begin{proposition}\label{pro:convmueu}
Assume~\eqref{eq:transvlambda}. Then the  object
$K_1\ltens(\cor_M\letens\omAI)\ltens K_2$ is a
trace kernel on $M$ and
\eqn
&&\mueu_M(K_1\ltens(\cor_M\letens\omAI)\ltens  K_2)=\mueu_M(K_1)\star\mueu_M(K_2).
\eneqn
\end{proposition}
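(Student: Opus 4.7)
The strategy is to realize the twisted tensor product $K_1\ltens(\cor_M\letens\omAI)\ltens K_2$ as the inverse image of the external product $K_1\letens K_2$ along the diagonal embedding $\delta_M\cl M\into M\times M$, and then chain Propositions~\ref{pro:extmueu} and~\ref{pro:inversmueu} to compute its microlocal Euler class.

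First, I would invoke Proposition~\ref{pro:extmueu} to conclude that $K_1\letens K_2$ is a trace kernel on $M\times M$ with microsupport contained in $\Lambda_{11}\times\Lambda_{22}$ (after the usual identification of $(M\times M)^2$ with $M^4$) and with microlocal Euler class
$$\mueu_{M\times M}(K_1\letens K_2)=\mueu_M(K_1)\letens\mueu_M(K_2).$$
I would then verify that the transversality hypothesis~\eqref{eq:transvlambda} is precisely the non-characteristic condition~\eqref{hyp:fnonchar} needed to apply Proposition~\ref{pro:inversmueu} with $f=\delta_M$. Unwinding the requirement that $\tw f=(\delta_M,\delta_M)$ be non-characteristic for $\Lambda_{11}\times\Lambda_{22}$ yields, via $f_d$ and $f_\pi$, exactly the condition $\Lambda_{11}\cap\Lambda_{22}^a\subset T^*_{M\times M}(M\times M)$.

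Proposition~\ref{pro:inversmueu} then produces the trace kernel $(\cor_M\letens\omega_{M/M\times M})\ltens\opb{\tw\delta_M}(K_1\letens K_2)$ on $M$, with microlocal Euler class $\delta_M^\mu\bl\mueu_M(K_1)\letens\mueu_M(K_2)\br$. Since $f=\delta_M$ identifies $\omega_{M/M\times M}\simeq\omAI$, and since the standard base change for external products along the diagonal (after the usual reshuffling of factors in $M^4$) gives $\opb{\tw\delta_M}(K_1\letens K_2)\simeq K_1\ltens K_2$, this trace kernel is precisely $K_1\ltens(\cor_M\letens\omAI)\ltens K_2$.

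Finally, by the very definition of $\star$ as the composition of $\letens$ with the diagonal restriction $\delta_M^\mu\cl\RMH[\Lambda_1\times\Lambda_2]{M\times M}\to\RMH[\Lambda_1+\Lambda_2]{M}$, one has
$$\delta_M^\mu\bl\mueu_M(K_1)\letens\mueu_M(K_2)\br=\mueu_M(K_1)\star\mueu_M(K_2),$$
which yields the desired formula. The main technical care lies in tracking the reshuffling of factors when passing between $M^4$ and $(M\times M)^2$, and in identifying the relative dualizing complex $\omega_{M/M\times M}$ with $\omAI$; this identification is exactly what accounts for the extra twist $\cor_M\letens\omAI$ appearing in the statement.
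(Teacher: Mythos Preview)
Your proof is correct and follows exactly the approach the paper indicates: the paper's own proof simply reads ``Applying Propositions~\ref{pro:extmueu} and~\ref{pro:inversmueu}, we get:'' and you have faithfully unpacked this, taking the external product to get a trace kernel on $M\times M$ and then pulling back along $\delta_M$. Your identifications $\omega_{M/M\times M}\simeq\omAI$ and $\opb{\tw\delta_M}(K_1\letens K_2)\simeq K_1\ltens K_2$ (after the reshuffling $M_{1122}\simeq M_{1212}$), as well as the unwinding of the non-characteristic hypothesis for $\tw\delta_M$ into condition~\eqref{eq:transvlambda}, are all correct.
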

Following~\cite[II~Cor.~5.6]{ScSn94}, we shall recall the link between the product $\star$ and the cup product.

\begin{proposition}\label{pro:starcup}
Let $\lambda_i\in H^0_{\Lambda_i}(T^*M_i;\opb{\pi_{M}}\omega_{M})$ \lp$i=1,2$\rp, and
$\Lambda_1\cap\Lambda_2^a\subset T^*_MM$. Then
\eq\label{eq:starcup1}
&&(\lambda_1\star\lambda_2)\vert_M=\int_{\pi_M}(\lambda_1\cup\lambda_2)
\eneq
as elements of $H^0_{\pi(\Lambda_1\cap\Lambda_2)}(M;\omega_M)$.
\end{proposition}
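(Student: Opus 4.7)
The plan is to unfold both sides of \eqref{eq:starcup1} through the canonical identification $\muhom(\dA,\omDA)\simeq(\delta_{T^*M}^a)_*\,\opb{\pi_M}\omega_M$ of Theorem~\ref{th:muconv}~(i), and to recognize that in this picture both expressions reduce to the same iterated pushforward/pullback of the exterior product $\lambda_1\letens\lambda_2$.

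First I would unfold the left-hand side. The product $\star$ was defined just before Proposition~\ref{pro:convmueu} as the composition of the exterior product $\letens\cl\RMH[\Lambda_1]{M}\tens\RMH[\Lambda_2]{M}\to\RMH[\Lambda_1\times\Lambda_2]{M\times M}$ with the microlocal pullback by the diagonal $\delta_M\cl M\into M\times M$; under the transversality condition \eqref{eq:transvlambda} this composition is well defined by Proposition~\ref{pro:inversmueu}. Applying Theorem~\ref{th:muconv}~(ii) with $M_1=M_3=\rmpt$ and $M_2=M$, both operations commute with the identification in~(i), and the pullback by the diagonal translates, at the level of $\opb{\pi}\omega$, into the classical microlocal inverse image $f^\mu=\reim{(\delta_M)_d}\opb{(\delta_M)_\pi}$ of~\cite[Prop.~9.3.2]{KS90}, where $(\delta_M)_\pi\cl T^*M\times_MT^*M\into T^*(M\times M)$ is the natural closed embedding and $(\delta_M)_d\cl T^*M\times_MT^*M\to T^*M$ is the fibrewise sum $(x;\xi,\eta)\mapsto(x;\xi+\eta)$.

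Next I would restrict to the zero section $s\cl M\into T^*M$. The fibre of $(\delta_M)_d$ above the zero section is the antidiagonal $\{(x;\xi,-\xi)\}\subset T^*M\times_MT^*M$, naturally identified with $T^*M$ via $(x;\xi)\mapsto(x;\xi,-\xi)$; under this identification, the composition $T^*M\simeq\text{antidiag.}\into T^*M\times_MT^*M\into T^*M\times T^*M$ is precisely $\delta_{T^*M}^a$. Proper base change --- legitimate since~\eqref{eq:transvlambda} makes $(\delta_M)_d$ proper on the support of $\opb{(\delta_M)_\pi}(\lambda_1\letens\lambda_2)$ --- therefore yields
\[
\opb{s}(\lambda_1\star\lambda_2)=\reim{\pi_M}\bl\opb{(\delta_{T^*M}^a)}(\lambda_1\letens\lambda_2)\br.
\]

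On the right-hand side, using the isomorphism $\opb{\pi_M}(\omega_M\ltens\omega_M)\simeq\omega_{T^*M}\simeq\epb{\pi_M}\omega_M$ appearing in the bottom row of Theorem~\ref{th:muconv}~(ii), the cup product $\lambda_1\cup\lambda_2$ is by construction the pullback $\opb{(\delta_{T^*M}^a)}(\lambda_1\letens\lambda_2)$ (the antipodal enters because, in the conventions of Section~2, $T^*(M\times M)\simeq T^*M\times T^*M$ identifies $T^*_\Delta(M\times M)$ with the antidiagonal via $\delta_{T^*M}^a$), while $\int_{\pi_M}$ is the integration morphism $\reim{\pi_M}\epb{\pi_M}\omega_M\to\omega_M$ adjoint to the identity. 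Comparing with the display above yields \eqref{eq:starcup1}. The main obstacle is the orientation bookkeeping in the identification $\opb{\pi_M}(\omega_M\ltens\omega_M)\simeq\omega_{T^*M}$; once it is fixed compatibly with the bottom row of the diagram of Theorem~\ref{th:muconv}~(ii), the statement becomes local and can be verified, as in the proof of that theorem, by reducing to the Euclidean case.
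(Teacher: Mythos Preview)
Your argument is essentially the same as the paper's, both resting on proper base change along the Cartesian square
\[
\xymatrix{
T^*_{\Delta_M}(M\times M)\ar[r]^-{f}\ar[d]_-{\pi}&M\times_{M\times M}T^*(M\times M)\ar[d]^-{(\delta_M)_d}\\
M\ar[r]_-{s}&T^*M,
}
\]
so that $\opb{s}\eim{(\delta_M)_d}\simeq\eim{\pi}\opb{f}$; applied to $\lambda_1\times_M\lambda_2$ this gives~\eqref{eq:starcup1}. The paper's proof stops there: once one writes down this square and identifies $(\lambda_1\star\lambda_2)|_M=\opb{s}\eim{(\delta_M)_d}(\lambda_1\times_M\lambda_2)$ and $\int_{\pi_M}(\lambda_1\cup\lambda_2)=\eim{\pi}\opb{f}(\lambda_1\times_M\lambda_2)$, nothing further is needed.

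Two small corrections to your write-up. First, the instance of Theorem~\ref{th:muconv}~(ii) you invoke ($M_1=M_3=\rmpt$, $M_2=M$) is not the relevant one; that case computes a class over a point from two classes on $M$ and is rather the setting of Corollary~\ref{cor:abstrindexth}. What you actually need is the identification, recorded in the ``Inverse image'' paragraph preceding Proposition~\ref{pro:inversmueu}, of $\mueu(\cor_{\Gamma_{\delta_M}})\circ\,\scbul$ with $(\delta_M)^\mu=\eim{(\delta_M)_d}\opb{(\delta_M)_\pi}$ at the level of $\opb{\pi}\omega$. Second, the closing remark about ``orientation bookkeeping'' and reducing to the Euclidean case is unnecessary: the statement is a pure base-change identity for cohomology classes with values in $\opb{\pi}\omega$, and no local computation is required.
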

\begin{proof}
Denote by $\delta\cl \Delta\hookrightarrow M_{12}= M\times M$ the diagonal embedding and let us identify
$M$ with $\Delta$. Consider the diagram
\eq\label{diagstar}
&&\xymatrix{
T^*_\Delta M_{12}\ar[d]_-\pi\ar[r]_-f&\Delta\times_{M_{12}}T^*M_{12}\ar[d]^-{\delta_d}\\
\Delta\ar[r]_-s                         &T^*\Delta
}\eneq
where $\pi$ is the projection, $\delta_d$ is the map associated with $\delta$,
$s$ is the zero-section embedding and $f$ is the restriction to $\Delta\times_MT^*M_{12}$
of the embedding $T^*_\Delta M_{12}\hookrightarrow T^*M_{12}$.
Since this diagram is Cartesian, we have
\eqn
&&\opb{s}\eim{\delta_d}\simeq\eim{\pi}\opb{f}.
\eneqn
Now let
$\lambda_1\times\lambda_2\in H^0_{\Lambda_1\times\Lambda_2}(T^*M_{12};\opb{\pi}\omega_{M_{12}})$
and denote by $\lambda_1\times_M\lambda_2$ its image by the map
\eqn
&& H^0_{\Lambda_1\times\Lambda_2}(T^*M_{12};\opb{\pi}\omega_{M_{12}})\to
H^0_{\Lambda_1\times_M\Lambda_2}(\Delta\times_{M_{12}}T^*M_{12};\opb{\pi}\omega_{M_{12}}).
\eneqn
(Here, on the right hand side, we still denote by $\pi$ the restriction of the projection $\pi_{M_{12}}$ to
$\Delta\times_{M_{12}}T^*M_{12}$.)
Then
\eqn
&&\int_\pi(\lambda_1\cup\lambda_2)=\eim{\pi}\opb{f}(\lambda_1\times_M\lambda_2),\\
&&(\lambda_1\star\lambda_2)\vert_M=\opb{s}\eim{\delta_d}(\lambda_1\times_M\lambda_2).
\eneqn
\end{proof}

\begin{corollary}\label{cor:abstrindexth}
Let $K_1$ and $K_2$ be two trace kernels on $M$ with $\SSi(K_i)\subset\Lambda_{ii}$.
Assume~\eqref{eq:transvlambda} and assume moreover that
$\Supp(K_1)\cap\Supp(K_2)$ is compact. Then the object
$\rsect\bl M\times M;K_1\ltens(\cor_M\letens\omAI)\ltens  K_2\br$  is a trace
kernel on $\rmpt$ and
\eqn
&&\eu_\rmpt\bl\rsect(M;K_1\ltens(\cor_M\letens\omAI)\ltens K_2\br
=\int_{T^*M}\mueu(K_1)\cup\mueu(K_2).
\eneqn
\end{corollary}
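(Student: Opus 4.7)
\medskip\noindent
\textbf{Proof plan.} The strategy is to assemble three previously established results: Proposition~\ref{pro:convmueu} turns the pair $(K_1,K_2)$ into a single trace kernel on $M$; Proposition~\ref{pro:directmueu} allows us to push it down to a point under the compactness hypothesis; and Proposition~\ref{pro:starcup} converts the $\star$-product into the cup product one wants on the right-hand side.

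First I would set $K\eqdot K_1\ltens(\cor_M\letens\omAI)\ltens K_2$ and invoke Proposition~\ref{pro:convmueu} (together with the transversality condition~\eqref{eq:transvlambda}) to conclude that $K$ is a trace kernel on $M$ and that
\eqn
&&\mueu_M(K)=\mueu(K_1)\star\mueu(K_2)\in\MHo[\Lambda_1+\Lambda_2]{M}.
\eneqn
Next, since $\Supp(K)\subset\Supp(K_1)\cap\Supp(K_2)$ is compact, the map $f\cl M\to\rmpt$ is proper on $\Supp(K)$, which identifies with $\SSi(K)\cap T^*_{M\times M}(M\times M)$. Thus Proposition~\ref{pro:directmueu} applies with the triple $(M_1,M_2,M_3)$ replaced by $(\rmpt,M,\rmpt)$ and yields that $\reim{\tw f}K\simeq\rsect(M\times M;K)$ is a trace kernel on $\rmpt$ with
\eqn
&&\mueu_\rmpt\bl\rsect(M\times M;K)\br=f_\mu(\mueu_M(K)).
\eneqn
On a point the Euler class and the microlocal Euler class coincide, and the diagram~\eqref{eq:fdfpi} for $f\cl M\to\rmpt$ shows that $f_\pi\cl M\to\rmpt$ is the constant map and $f_d\cl M\hookrightarrow T^*M$ is the zero-section, so $f_\mu(\lambda)=\eim{f_\pi}(\lambda\vert_M)=\int_M(\lambda\vert_M)$.

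Combining these two computations gives
\eqn
&&\eu_\rmpt\bl\rsect(M\times M;K)\br
=\int_M\bl(\mueu(K_1)\star\mueu(K_2))\vert_M\br.
\eneqn
Applying Proposition~\ref{pro:starcup} to rewrite the restriction to the zero-section as $\int_{\pi_M}(\mueu(K_1)\cup\mueu(K_2))$ and then composing the two integrations (i.e.\ Fubini for $\pi_M\cl T^*M\to M$ followed by $M\to\rmpt$), one obtains
\eqn
&&\int_M\int_{\pi_M}(\mueu(K_1)\cup\mueu(K_2))=\int_{T^*M}\mueu(K_1)\cup\mueu(K_2),
\eneqn
which is the desired identity.

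The proof is essentially a direct assembly of named results, so no serious obstacle arises; the only points that require attention are (i) verifying that the compactness of $\Supp(K_1)\cap\Supp(K_2)$ is exactly the hypothesis needed in~\eqref{hyp:fproper} for $\tw f\cl M\times M\to\rmpt$, and (ii) correctly tracing the identification of $f_\mu$ in the extreme case $M_2=\rmpt$ so that the pushforward to the zero-section followed by integration over $M$ genuinely matches the integration over $T^*M$ appearing in the statement.
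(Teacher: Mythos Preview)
Your proof is correct and follows precisely the approach the paper intends: the corollary is stated without proof in the paper, but it is meant to be the immediate assembly of Proposition~\ref{pro:convmueu}, Proposition~\ref{pro:directmueu} (with $M_2=\rmpt$), and Proposition~\ref{pro:starcup}, exactly as you carry it out. Your handling of the two delicate points---the identification $\SSi(K)\cap T^*_{M\times M}(M\times M)=\Supp(K)\subset\Supp(K_1)\cap\Supp(K_2)$ to verify hypothesis~\eqref{hyp:fproper}, and the unwinding of $f_\mu$ for $f\cl M\to\rmpt$ as restriction to the zero-section followed by $\int_M$---is accurate.
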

\begin{remark}
Let $M$  be a real analytic manifold and let $F\in\Derb_\Rc(\cor_{M})$. Recall that one associates to $F$ the
trace kernel $\HK(F)=F\letens\RD_MF$ and that $\mueu_M(F)=\mueu_M(\HK(F))$. Assume now that $f\cl M_1\to M_2$
is a morphism of real analytic manifolds.

\medskip\noindent
Let $F_1\in\Derb_\Rc(\cor_{M_1})$ and assume that $f$ is proper on $\Supp(F_1)$.
Applying Proposition~\ref{pro:directmueu} and noticing that
\eq
&&\reim{\tw f}\HK(F_1)\simeq\HK(\reim{f}F_1),
\eneq
 we find
that $\mueu(\reim{f}F_1)=f_\mu(\mueu(F_1))$.
It is nothing but \cite[Prop.~9.4.2]{KS90}.

\medskip\noindent
Let $F_2\in\Derb_\Rc(\cor_{M_2})$ and assume that $f$ is non characteristic with respect to $F_2$.
Applying Proposition~\ref{pro:inversmueu}  and noticing that
\eqn
&&\HK(f^{-1}F_2)
\simeq(\cor_1\letens\omega_{M_1/M_2})\ltens
\opb{\tw f}\HK(F_2),
\eneqn
we find that
$\mueu(\opb{f}F_2)=f^\mu(\mueu(F_2))$.
Hence, we recover~\cite[Prop.~9.4.3]{KS90}.
\end{remark}

\section{Applications: $\shd$-modules and elliptic pairs}
As an application of Theorem~\ref{th:HH1}, we shall recover the theorem of~\cite{ScSn94} on the
index of elliptic pairs.
In this section, $X$ is a complex manifold, $\cor=\C$, $\shm$ is an object of $\Derb_\coh(\shd_X)$ and $F$
is an object of $\Derb_\Rc(\C_X)$.

Recall that we have denoted by $\HK(F)$ and $\HK(\shm)$ (see Notation~\ref{not:HKM})
the trace kernels associated with  $F$ and with $\shm$,
 respectively:
\eqn
\HK(F)&\seteq&F\letens\RD_XF,\\
\HK(\shm)&\seteq& \Omega_{X\times X}\lltens[\shd_{X\times X}](\shm\detens\RDD\shm).
\eneqn
The pair $(\shm,F)$ is called an {\em elliptic pair} in loc.\ cit.\  if $\chv(\shm)\cap\SSi(F)\subset T^*_XX$.
{}From now on, we assume that  $(\shm,F)$ is an elliptic pair.

It follows from Proposition~\ref{pro:convmueu} that the tensor product of $\HK(F)$ and $\HK(\shm)$ shifted
by $-2d_X$ is again a trace kernel. We denote it by  $\HK(\shm,F)$. Hence
\eq
&& \HK(\shm,F)\simeq \Omega_{X\times X}\lltens[\shd_{X\times X}](\shm\detens\RDD\shm)\tens(F\letens\RD'_XF).
\eneq
Moreover the same statement  gives:
\eq\label{eq:ellpairs2}
&&\mueu_X\bl\HK(\shm,F)\br=\mueu_X(\shm)\star\mueu_X(F).
\eneq
We set
\eq
&&\GSol(\shm,F)\eqdot \RHom[\shd_X](\shm\tens F,\sho_X),\label{eq:SolMF}\\
&&\DR(\shm,F)\eqdot \rsect(X;\Omega_X\lltens[\shd_{X}]\shm\tens F)\,[d_X].\label{eq:DRMF}
\eneq

As explained in~\cite{ScSn94}, Theorem~\cite[Th~11.3.3]{KS90} and isomorphism~\eqref{eq:elliptshv}
provides
a generalization of the classical Petrovsky
regularity theorem, namely, the natural isomorphisms
\eq\label{eq:petrovski}
&&\rhom[\shd_X](\shm,\RD'_XF\tens\sho_X)
\isoto\rhom[\shd_X](\shm\tens F,\sho_X).
\eneq
Now assume that  $\Supp(\shm)\cap\Supp(F)$ is compact and let us take the global sections
of the isomorphism~\eqref{eq:petrovski}.
We find the isomorphism
\eq\label{eq:petrovski5}
&&\RHom[\shd_X](\shm,\RD'_XF\tens\sho_X)\isoto\RHom[\shd_X](\shm\tens F,\sho_X).
\eneq
It is proved in~\cite{ScSn94}
\footnote{In fact, the finiteness of the cohomology of
 this complex is only proved in  loc.\ cit.\ under the hypothesis that $\shm$ admits a good filtration,
but this hypothesis may be removed thanks to the results of~\cite[Appendix]{KS96}.}
that one can represent the left hand side
of~\eqref{eq:petrovski5} by a complex
of topological vector spaces of type DFN and the right hand side of~\eqref{eq:petrovski5} by a complex
of topological vector spaces of type FN. It follows that
the complexes $\GSol(\shm,F)$ and $\DR(\shm,F)$ have
finite dimensional cohomology and are dual to each other. More precisely,
denoting by $(\scbul)^*$ the duality functor in $\Derb_f(\C)$, we have
\eqn
&&\bl\GSol(\shm,F)\br^*\simeq\DR(\shm,F).
\eneqn
It follows from the finiteness of the cohomology of the complexes  $\GSol(\shm,F)$ and $\DR(\shm,F)$ that
\eqn
\rsect(X\times X;\HK(\shm,F))&\simeq&\GSol(\shm,F)\tens \DR(\shm,F).
\eneqn

One checks that this isomorphism commutes with the composition of the morphisms
$\C\to\rsect(X\times X;\HK(\shm,F)) \to \C$ and $\C\to \GSol(\shm,F)\tens \DR(\shm,F)\to\C$, which implies
\eq
&&\eu_\rmpt\bl\rsect(X\times X;\HK(\shm,F)\br=\chi\bl\GSol(\shm,F)\br.
\eneq
 Therefore, one recovers the index formula of loc.\ cit.\
\eq\ba{rcl}\label{eq:RR}
\chi\bl\RHom[\shd_X](\shm\tens F,\sho_X)\br&=&\int_X(\mueu_X(\shm)\star\mueu_X(F))\vert_X\\[1ex]
&=&\int_{T^*X}\mueu_X(\shm)\cup\mueu_X(F).
\ea\eneq
\begin{remark}
In general the direct image of an elliptic pair is no more an elliptic pair. However, it remains a trace kernel.
\end{remark}
\begin{remark}
As already mentioned in~\cite{ScSn94}, formula~\eqref{eq:RR} has many applications,
as far as one is able to calculate
$\mueu_X(\shm)$ (see the final remarks below). For example, if $M$ is a compact real analytic
manifold and $X$ is a complexification of $M$, one recovers the Atiyah-Singer theorem by choosing
$F=\RD'\C_M$. If $X$ is a complex compact manifold, one recovers the Riemann-Roch theorem: one takes
$F=\C_X$ and if $\shf$ is a coherent $\sho_X$-module, one sets
$\shm=\shd_X\tens[\sho_X]\shf$.
\end{remark}

\section{The Lefschetz fixed point formula}
In this section, we shall briefly show how to adapt the formalism of trace kernels to the
Lefschetz trace formula as treated in~\cite[\S~9.6]{KS90}. Here we assume that $\cor$ is a field.

Assume to be given two maps $f,g\cl N\to M$ of real analytic manifolds, an object $F\in\Derb_\Rc(\cor_M)$ and a morphism
\eq\label{eq:lef1}
&&\phi\cl\opb{f}F\to\epb{g}F.
\eneq
Set
\eqn
&&h=(g,f)\cl N\times N \to M\times M,\\
&&S=\Supp(F), 
\quad L=\opb{h}(\Delta_M)=\set{(x,y)\in N\times N}{g(x)=f(y)},\\
&& i\cl L\hookrightarrow N\times N,\\
&&T=\opb{f}(S)\cap\opb{g}(S).
\eneqn
One makes the assumption
\eq\label{hyp:lef2}
&&\mbox{The set $T$ is compact.}
\eneq
Then we have the maps
\eqn
&&\rsect(M;F)\to\rsect_{\opb{f}S}(N;\opb{f}F)\to[\phi] \rsect_{T}(N;\epb{g}F)\to\rsect(M;F).
\eneqn
The composition gives a map
\eq\label{eq:lef3}
&&\int\phi\cl\rsect(M;F)\to\rsect(M;F),
\eneq
and this map factorizes through $\rsect_{T}(N;\epb{g}F)$ which has
 finite-dimensional cohomologies. Hence, we can define the trace
$\tr(\int\phi)$.

 We have the chain of morphisms 
\eqn
\cor_N&\to&\rhom(\epb{g}F,\epb{g}F)\\
&\to[\phi]&\rhom(\opb{f}F,\epb{g}F)
\simeq\epb{\delta_N}(\epb{g}F\letens\RD_N\opb{f}F)\\
&\simeq&\epb{\delta_N}(\epb{g}F\letens\epb{f}\RD_MF)\simeq \epb{\delta_N}\epb{h}(F\letens \RD_MF).
\eneqn
We have thus constructed the morphism 
\eqn
&&\dA[N]\to \epb{h}(F\letens \RD_MF).
\eneqn

By using the morphism $F\letens \RD_MF\to \omDA[M]$ and the isomorphism $\epb{h}\omDA[M]\simeq \oim{i}\omA[L]$,
we get  the morphisms
\eq\label{eq:Kphi}
&&\dA[N]\to \epb{h}(F\letens \RD_MF)\to i_*\omA[L]
\eneq
in $\Derb(\cor_{N\times N})$.
The support of the composition is contained in $\delta_N(T)\cap L$.
\begin{theorem}[{\cite[Proposition~9.6.2]{KS90}}]
The trace
$\tr(\int\phi)$ coincides with the image of $1\in\cor$
by the composition of the morphisms
\eqn
&&\cor\to\rsect(N,\cor_N)
\to\rsect_c(L,\omA[L])\to \cor.
\eneqn
Here the middle arrow is derived from \eqref{eq:Kphi}.
\end{theorem}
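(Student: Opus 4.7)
The plan is to express both $\tr(\int\phi)$ and the composition stated in the theorem as the image of $1\in\cor$ under a single morphism in $\Derb(\cor)$, reducing the equality to a commutativity check among the six-functor adjunctions. Write $V=\rsect(M;F)$ and $V^\vee=\RHom(V,\cor)$. By~\eqref{hyp:lef2} together with the $\R$-constructibility of $F$, the complex $\rsect_T(N;\epb{g}F)$ is perfect over $\cor$, so $\int\phi\cl V\to V$ factors through a dualizable object and its trace is the image of $1$ under the abstract coevaluation/evaluation composition $\cor\to V\otimes V^\vee\to[\int\phi\otimes\id]V\otimes V^\vee\to\cor$.

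First I would identify the first leg of this composition with the first half of~\eqref{eq:Kphi}. Applying $\rsect(N\times N;-)$ to $\dA[N]\to\epb{h}(F\letens\RD_MF)$ yields a map $\rsect(N;\cor_N)\to\rsect(N\times N;\epb{h}(F\letens\RD_MF))$, and using the Künneth isomorphism $\rsect(M\times M;F\letens\RD_MF)\simeq V\otimes V^\vee$, together with the $(\reim{h},\epb{h})$-adjunction and the canonical identification $\rhom(\opb{f}F,\epb{g}F)\simeq\epb{\delta_N}\epb{h}(F\letens\RD_MF)$ already used in constructing~\eqref{eq:Kphi}, this morphism corresponds exactly to the coevaluation $\cor\to V\otimes V^\vee$ evaluated at $\phi$.

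Next I would identify the second leg. Composing with the trace map $F\letens\RD_MF\to\omDA[M]$ of the trace kernel $\HK(F)$ and using $\epb{h}\omDA[M]\simeq\oim{i}\omA[L]$, the second half of~\eqref{eq:Kphi} pushes forward under $\rsect$ to $\rsect(N\times N;\epb{h}(F\letens\RD_MF))\to\rsect_c(L;\omA[L])\to\cor$, where the last arrow is the integration arising from $\reim{(a_L)}\epb{(a_L)}\cor\to\cor$. The compactness of $\delta_N(T)\cap L$, which is immediate from~\eqref{hyp:lef2}, is precisely the condition making the factorization through compactly supported sections well defined and the integration map meaningful. Up to the standard Künneth identifications, this is the evaluation $V\otimes V^\vee\to\cor$ used to compute the trace.

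The main obstacle is verifying that the abstract categorical trace really matches the concrete geometric composition above. This reduces to commutativity of a diagram encoding proper base change along the cartesian square with top row $L\to N\times N$ (via $i$) and bottom row $M\to M\times M$ (via $\delta_M$) through the map $h=(g,f)$, together with the Verdier-duality identity $\RD_N\opb{f}F\simeq\epb{f}\RD_MF$ and compatibility of $\epb{h}$ with the external product $F\letens\RD_MF$. Once this essentially six-functor compatibility is tracked through, both ends of the argument produce the same element of $\cor$, and the formula follows.
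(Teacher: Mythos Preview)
The paper does not give its own proof of this statement: the theorem is quoted verbatim from \cite[Proposition~9.6.2]{KS90} and no argument follows it in the text. There is therefore no ``paper's proof'' to compare your proposal against; the authors simply invoke the result and then move on to remark that the morphisms in~\eqref{eq:Kphi} are not quite trace kernels in their sense.

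Your outline is in the right spirit---this is indeed the standard six-functor bookkeeping that underlies the proof in \cite{KS90}---but one point deserves correction. You write the categorical trace as the image of $1$ under $\cor\to V\tens V^\vee\to[\int\phi\tens\id]V\tens V^\vee\to\cor$, which presupposes that $V=\rsect(M;F)$ is itself dualizable. In general it is not: there is no compactness hypothesis on $M$ or on $\Supp(F)$. What the hypotheses actually give you is a factorization $\int\phi\cl V\to W\to V$ with $W=\rsect_T(N;\epb{g}F)$ perfect, and the trace of such a nuclear endomorphism is defined as the trace of the induced endomorphism of $W$ (equivalently, via the coevaluation and evaluation for $W$, not for $V$). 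Once you make that adjustment, the remaining identifications you sketch---K\"unneth, proper base change along the square relating $i$ and $\delta_M$ through $h$, and the duality identity $\RD_N\opb{f}F\simeq\epb{f}\RD_MF$---are exactly the ingredients used in \cite[\S~9.6]{KS90}, so your plan would go through.
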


 Although~\eqref{eq:Kphi} is not a trace kernel in the sense of Definition~\ref{def:traceker}, 
it should be possible to adapt the previous constructions to the case of $\shd$-modules and
to elliptic pairs, then to
recover a theorem of~\cite{Gu96} but we do not develop this point here (see~\cite{RTT12} for related results).

\subsubsection*{Final remarks}
The microlocal Euler class of constructible sheaves is easy to compute since it is enough to calculate
some multiplicities at generic points. We refer to~\cite{KS90} for examples.

On the other hand, there is no direct  method to calculate the  microlocal Euler class of
 a coherent $\shd$-module $\shm$ (except in the holonomic case).
In \cite{ScSn94}, the authors made a precise conjecture relying $\mueu_X(\shm)$ and the Chern
character of the associated graded module (an $\sho_{T^*X}$-module), and this conjecture has been proved
 by Bressler-Nest-Tsygan~\cite{BNT02}.

Similarly, the Hochschild class of coherent $\sho_X$-modules is usually calculated through the
 so-called Hochschild-Kostant-Rosenberg isomorphism, 
but this isomorphism does not commute with
 proper direct images, and a precise conjecture (involving the Todd class) has been made by
Kashiwara in~\cite{Ka91} and this conjecture has recently been proved in the algebraic case by
 Ramadoss~\cite{Ra08} and in the general case by Grivaux~\cite{Gr09}.

\providecommand{\bysame}{\leavevmode\hbox to3em{\hrulefill}\thinspace}

\vspace*{1cm}
\noindent
\parbox[t]{25em}
{\scriptsize{
\noindent
Masaki Kashiwara\\
Research Institute for Mathematical Sciences, Kyoto University \\
and Department of Mathematical Sciences, Seoul National
University\\
e-mail: masaki@kurims.kyoto-u.ac.jp

\medskip\noindent
Pierre Schapira\\
Institut de Math{\'e}matiques,
Universit{\'e} Pierre et Marie Curie\\
and Mathematics Research Unit,
University of Luxemburg\\
e-mail: schapira@math.jussieu.fr\\
}}

\end{document}